\newtheorem{defin}{Definition}[section]
\newtheorem{proposition}[defin]{Proposition}
\newtheorem{lemma}[defin]{Lemma}
\newtheorem{thm}[defin]{Theorem}
\newtheorem{theorem}[defin]{Theorem}
\newtheorem{corollary}[defin]{Corollary}
\newtheorem{claim}{Claim}
\def\qed{\hbox{\kern1pt\vrule height6pt width4pt
depth1pt\kern1pt}\medskip}
\newcommand{\R}{{\mathbb{R}}}
\newcommand{\Q}{{\mathbb{Q}}}
\newcommand{\A}{{\mathbb{A}}}
\newcommand{\scra}{{\mathcal{A}}}
\newcommand{\scrr}{{\mathcal{R}}}
\newcommand{\Lo}{{\rm Loop}}
\newcommand{\sm}{\setminus}
\newcommand{\rank}{\mbox{\rm rank }}
\definecolor{blue-green}{rgb}{0.0, 0.87, 0.87}
\title{Point-hyperplane frameworks, slider joints, and rigidity preserving transformations}
\date{}
\author{Yaser Eftekhari\footnote{York University, Toronto, Canada, yas.eftekhari@gmail.com, supported in part by NSERC, Canada}, Bill Jackson\footnote{Queen Mary, University of London, UK, b.jackson@qmul.ac.uk}, Anthony Nixon\footnote{Lancaster University, UK, a.nixon@lancaster.ac.uk}, Bernd Schulze\footnote{Lancaster University, UK, b.schulze@lancaster.ac.uk, supported by EPSRC Grant EP/M013642/1},
\\ Shin-ichi Tanigawa\footnote{Kyoto University, Kyoto, Japan, and Centrum Wiskunde \& Informatica (CWI), Amsterdam, The Netherlands, supported by JSPS Postdoctoral Fellowships for Research Abroad and JSPS Grant-in-Aid for Scientific Research(A)(25240004)} and Walter Whiteley\footnote{York University, Toronto, Canada, whiteley@mathstat.yorku.ca, supported in part by NSERC, Canada}}
\begin{document}
\maketitle

\begin{abstract}
A one-to-one correspondence between the infinitesimal motions of bar-joint frameworks in $\mathbb{R}^d$ and those in $\mathbb{S}^d$ is a classical observation by Pogorelov, and further connections among different rigidity models in various different spaces have been extensively studied. In this paper, we shall extend this line of research to include the infinitesimal rigidity of frameworks consisting of points and hyperplanes. This enables us to understand correspondences between point-hyperplane rigidity, classical bar-joint rigidity, and scene analysis.

Among other results, we derive a combinatorial characterization of graphs that can be realized as infinitesimally rigid frameworks in the plane with a given set of points collinear. This extends a result by Jackson and Jord\'{a}n, which deals with the case when three points are collinear.
\end{abstract}

\noindent Keywords: infinitesimal rigidity, bar-joint framework, point-hyperplane framework, spherical framework, slider constraints

\section{Introduction}
Given a collection of objects in a space satisfying particular geometric constraints, a fundamental question is whether the given constraints uniquely determine the whole configuration up to congruence. The rigidity problem for bar-joint frameworks in $\mathbb{R}^d$, where the objects are points, the constraints are pairwise distances and only local deformations are considered, is a classical example.

Pogorelov~\cite[Chapter V]{Pog} observed that the space of infinitesimal motions of a bar-joint framework on a semi-sphere is isomorphic to those of the framework obtained by a central projection to Euclidean space. Since then, connections between various types of rigidity models in different spaces have been extensively studied, see, e.g.,~\cite{C&W,CrWh,Izmestiev,SaliolaWh,BSWWSphere,polarity1,polarity2}. When talking about  infinitesimal rigidity, these connections are often just consequences of the fact that  infinitesimal rigidity is preserved by projective transformations. A key essence of the research is its geometric and combinatorial interpretations, which sometimes give us unexpected connections between theory and real applications.

In this paper we shall extend this line of research to include {\em point-hyperplane rigidity}. A {\em point-hyperplane framework} consists of points and hyperplanes along with point-point distance constraints, point-hyperplane distance constraints, and hyperplane-hyperplane angle constraints.  The $2$-dimensional point-line version has been considered, for example in \cite{JO,Owen,Whlear}, for a possible application to CAD.
We will show  that the infinitesimal rigidity of a point-hyperplane framework is closely related to that of a bar-joint framework with {\em nongeneric} positions for its joints.
Understanding the infinitesimal rigidity of such nongeneric bar-joint frameworks  is a classical but still  challenging problem,
and our results  give new insight into this problem.

Specifically, in Section 2 we establish a one-to-one correspondence between the space of infinitesimal motions of a point-hyperplane framework and that of a bar-joint framework with a given set of joints in the same hyperplane by extending the correspondence between Euclidean rigidity and spherical rigidity. Combining this with a result by Jackson and Owen~\cite{JO} for point-line rigidity, we give a combinatorial characterization of a graph that can be realized as an infinitesimally rigid bar-joint framework in the plane with a given set of points collinear. This extends a result by Jackson and Jord\'{a}n~\cite{JJcol}, which deals with the case when three points are collinear.

Let us denote the underlying graph of a point-hyperplane framework in $\mathbb{R}^d$ by $G=(V_P\cup V_L, E_{PP}\cup E_{PL}\cup E_{LL})$, where $V_P$ and $V_L$ represent the set of points and the set of hyperplanes, respectively. The edge set is partitioned into $E_{PP}, E_{LP}, E_{LL}$ according to the bipartition $\{V_P, V_L\}$ of the vertex set.
Each $i\in V_P$ is associated with $p_i\in \mathbb{R}^d$ while each $j\in V_L$ is associated with a hyperplane $\{x\in \mathbb{R}^d: \langle a_j, x\rangle+r_j=0\}$ for some $a_j\in \mathbb{S}^{d-1}$ and $r_j\in \mathbb{R}$.
We will see in Section 2.2 that the  infinitesimal motions of the framework are given by the  solutions of the following system of linear equations in $\dot{p}_i, \dot{a}_j, \dot{r}_j$:
\begin{align}
\langle p_i - p_j, \dot{p}_i-\dot{p}_j\rangle&=0 && (ij\in E_{PP}) \nonumber \\
\langle p_i, \dot{a}_j\rangle+\langle \dot{p}_i, a_j\rangle +\dot{r}_j&=0 && (ij\in E_{PL}) \nonumber\\
\langle a_i, \dot{a}_j\rangle+\langle \dot{a}_i, a_j\rangle&=0 && (ij\in E_{LL}) \nonumber\\
\langle a_i, \dot{a}_i\rangle&=0 && (i\in V_L).\nonumber
\end{align}

Now observe that, if $V_L=\emptyset$ then the system is exactly that of a bar-joint framework on $V_P$ in Euclidean space while, if $V_P=\emptyset$ then the system becomes that of a bar-joint framework on $V_L$ in spherical space. Hence the system of point-hyperplane frameworks is a mixture of
these two settings.
%that in a Euclidean space and that in a spherical space.
Further detailed restrictions of the system enable us to link various types of rigidity models with point-hyperplane rigidity. In  the second part of the paper, the following new results are obtained:
\begin{itemize}
\item If $\dot{a}_j=0\ (j\in V_L)$, the system models the case when the normal of each hyperplane is fixed. Such a rigidity model was investigated by Owen and Power~\cite{OP} for $d=2$. We show how to derive their combinatorial characterization in the plane from the result of Jackson and Owen~\cite{JO}.

When $E_{PP}=E_{LL}=\emptyset$, we further point out a connection to the parallel drawing problem from scene analysis, and we derive a combinatorial characterization of graphs $G=(V_P\cup V_L, E_{PL})$ which can be realized as a fixed-normal rigid point-hyperplane framework in $\mathbb{R}^d$ using a theorem of Whiteley~\cite{WhHypergraph}.

\item If $\dot{r}_j=0\ (j\in V_L)$, the system can model the case when concurrent hyperplanes can rotate around a common intersection point. We derive a characterization of graphs which can be realized as a rigid point-line framework in the plane in this rigidity model.
By using the rigidity transformation established in Section 2, this result is translated to a characterization of the infinitesimal rigidity of bar-joint frameworks in the plane with horizontal slider-joints on a line. Our result allows us to put slider points anywhere on this line.

\item If $\dot{a}_j=\dot{r}_j=0\ (j\in V_L)$, the system models the case when each hyperplane is fixed. A combinatorial characterization is derived for $d=2$ by first transforming the point-line framework to a bar-joint framework (with nongeneric positions for its joints) and then applying a theorem by Servatius et al~\cite{ShaiServWh}.
\end{itemize}

Point-line frameworks in the plane with different types of constraints imposed on the lines may be used to model structures in engineering with various types of slider-joints (e.g. linkages with prismatic joints in mechanical engineering). %, or structures with slide plates in civil engineering).
Indeed, the use of slider-joints in both mechanical and civil engineering provides a key motivation for our work.
The following example from \cite{RW} illustrates how our results may be applied to problems involving slider-joints in engineering, see also \cite{KatTan, Mys, StreinuTheran}.
%While the reader may be most familiar with sliding joints in engineering that correspond to point-line distance constraints where the lines are completely fixed %, there are situations where point-line distance constraints with partially constrained lines are required to model the structure accurately. As an example,
Consider the `sliding pair chain' shown in Figure~\ref{fig:DCa}(a) consisting of four rigid bodies (labelled $B_1$, $B_2$, $B_3$, $B_4$) connected at five slider joints (labeled $\ell_1,\ell_2,\ldots,\ell_5$).
%system of five sliding joints restricting the motion of four links (bodies) shown in Figure~\ref{fig:DCa}(a).
% Each link may be modelled as a single bar, and the five sliding joints may be represented by five lines $\ell_1$, $\dots$, $\ell_5$ along which the bodies slide.
%Every link is only allowed to slide relative to certain other links so the relative rotational motions are eliminated.
Each slider joint constrains the relative motion between its two incident bodies to be a translation in a direction determined by the orientation of the slider joint.
%Therefore, relative rotational motions are eliminated and
We may model this system as a  point-line framework, with each body  represented by a `bar' i.e. two points joined by a distance constraint, as indicated in Figure~\ref{fig:DCa}(b). We will see in Section~\ref{subsec:jj} that this framework has one degree of freedom.

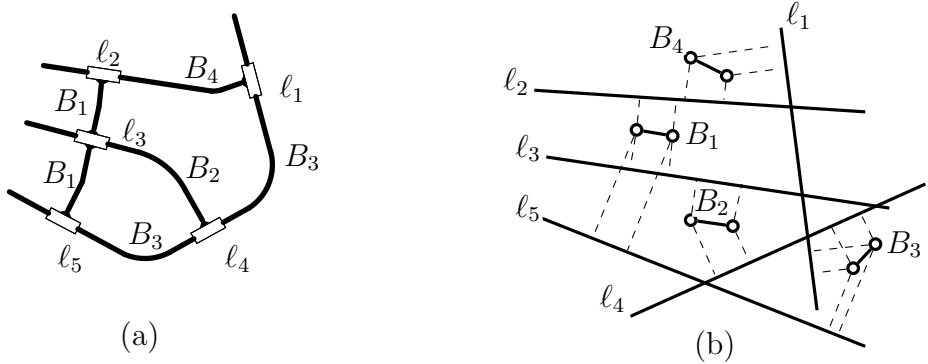
\begin{figure}[htp]
\begin{center}
\begin{tikzpicture}[line cap=round,line join=round,x=1.0cm,y=1.0cm,scale=0.76]
\draw(4.096441487877306,1.941029383754102) -- (4.064258660214266,1.7368450717971724) -- (4.64055244353629,1.6512568861552879) -- (4.669893671268323,1.8546187807773733) -- cycle;
\draw(3.910199926058874,0.8353161830359884) -- (3.853141135630951,0.6299045374954655) -- (4.412317281824597,0.48155168238286555) -- (4.475081951295313,0.6869633279233885) -- cycle;
\draw(3.4522103510220505,-0.5360730814609189) -- (3.3558738816018825,-0.7223235890065769) -- (3.869668385176112,-0.9920657033830471) -- (3.96600485459628,-0.805815195837389) -- cycle;
\draw(6.983131896301509,2.0014894476809237) -- (6.778638063679632,1.9471634729826215) -- (6.929026548715345,1.3825554049068356) -- (7.130916756357505,1.4404266024344958) -- cycle;
\draw(6.397291156476958,-0.7035909688865847) -- (6.501486006256852,-0.8830376546186219) -- (5.9978775656540355,-1.1608905873650024) -- (5.893682715874144,-0.9814439016329648) -- cycle;
\fill[fill=black,fill opacity=1.0] (3.7412197592825547,-0.5874525318183418) -- (3.602995105438134,-0.6152350775293627) -- (3.797535414384365,-0.7173687397261338) -- cycle;
\fill[fill=black,fill opacity=1.0] (4.109905692556604,0.4644340452544886) -- (4.044606349239949,0.5791076440889966) -- (4.224110851387556,0.5314840006620807) -- cycle;
\fill[fill=black,fill opacity=1.0] (4.201199757241282,0.846727941121573) -- (4.089125939485931,0.7883255128430241) -- (4.269724677042619,0.7408955413634898) -- cycle;
\fill[fill=black,fill opacity=1.0] (4.326729096182713,1.6113157328557417) -- (4.23041241155347,1.7121687720933303) -- (4.449615103506464,1.6796139168527866) -- cycle;
\fill[fill=black,fill opacity=1.0] (6.764002582802883,1.6447497983062986) -- (6.824233096865494,1.7759846517131102) -- (6.879251804492295,1.5694262425781416) -- cycle;
\fill[fill=black,fill opacity=1.0] (6.096283812668379,-0.7556883937765311) -- (6.05834808703073,-0.8905940416845036) -- (6.228334966136731,-0.7968081773501582) -- cycle;
\draw [shift={(6.420652588310527,0.21422257545746157)},line width=2.pt]  plot[domain=-1.0019484683735378:0.25168997175818697,variable=\t]({1.*0.8991146585813408*cos(\t r)+0.*0.8991146585813408*sin(\t r)},{0.*0.8991146585813408*cos(\t r)+1.*0.8991146585813408*sin(\t r)});
\draw [shift={(5.091880956963935,-0.5060461592911578)},line width=2.pt]  plot[domain=4.309273391165991:5.206330349304271,variable=\t]({1.*0.9180401029873122*cos(\t r)+0.*0.9180401029873122*sin(\t r)},{0.*0.9180401029873122*cos(\t r)+1.*0.9180401029873122*sin(\t r)});
\draw [shift={(4.607454448474973,-0.8533597885840692)},line width=2.pt]  plot[domain=0.47987248375307245:1.2897984949530479,variable=\t]({1.*1.3440862514658878*cos(\t r)+0.*1.3440862514658878*sin(\t r)},{0.*1.3440862514658878*cos(\t r)+1.*1.3440862514658878*sin(\t r)});
\draw [line width=2.pt] (5.799731265251372,-0.23284077714512977)-- (6.096283812668379,-0.7556883937765311);
\draw (4.096441487877306,1.941029383754102)-- (4.064258660214266,1.7368450717971724);
\draw (4.064258660214266,1.7368450717971724)-- (4.64055244353629,1.6512568861552879);
\draw (4.64055244353629,1.6512568861552879)-- (4.669893671268323,1.8546187807773733);
\draw (4.669893671268323,1.8546187807773733)-- (4.096441487877306,1.941029383754102);
\draw (3.910199926058874,0.8353161830359884)-- (3.853141135630951,0.6299045374954655);
\draw (3.853141135630951,0.6299045374954655)-- (4.412317281824597,0.48155168238286555);
\draw (4.412317281824597,0.48155168238286555)-- (4.475081951295313,0.6869633279233885);
\draw (4.475081951295313,0.6869633279233885)-- (3.910199926058874,0.8353161830359884);
\draw (3.4522103510220505,-0.5360730814609189)-- (3.3558738816018825,-0.7223235890065769);
\draw (3.3558738816018825,-0.7223235890065769)-- (3.869668385176112,-0.9920657033830471);
\draw (3.869668385176112,-0.9920657033830471)-- (3.96600485459628,-0.805815195837389);
\draw (3.96600485459628,-0.805815195837389)-- (3.4522103510220505,-0.5360730814609189);
\draw (6.983131896301509,2.0014894476809237)-- (6.778638063679632,1.9471634729826215);
\draw (6.778638063679632,1.9471634729826215)-- (6.929026548715345,1.3825554049068356);
\draw (6.929026548715345,1.3825554049068356)-- (7.130916756357505,1.4404266024344958);
\draw (7.130916756357505,1.4404266024344958)-- (6.983131896301509,2.0014894476809237);
\draw (6.397291156476958,-0.7035909688865847)-- (6.501486006256852,-0.8830376546186219);
\draw (6.501486006256852,-0.8830376546186219)-- (5.9978775656540355,-1.1608905873650024);
\draw (5.9978775656540355,-1.1608905873650024)-- (5.893682715874144,-0.9814439016329648);
\draw (5.893682715874144,-0.9814439016329648)-- (6.397291156476958,-0.7035909688865847);
\draw [line width=2.pt] (6.255733220946309,1.50631402422271)-- (4.657436758996908,1.7490679430451415);
\draw [line width=2.pt] (4.080085439672488,1.8367577419278602)-- (3.3160459730147536,1.9528022800230946);
\draw [line width=2.pt] (3.0229838870597003,0.9511761285833096)-- (3.8788897481252818,0.726643002524263);
\draw [line width=2.pt] (4.980115118813288,0.43775425175875726)-- (4.442411841064472,0.5788120518476931);
\draw [line width=2.pt] (4.731746589589626,-1.3504991586516082)-- (3.918134317814373,-0.9085369369465321);
\draw [line width=2.pt] (3.4055703330288614,-0.6301071180506987)-- (2.719961502877965,-0.25767763006749594);
\draw [line width=2.pt] (5.526525883105344,-1.313243879268059)-- (5.94647968887873,-1.0786750868540955);
\draw [line width=2.pt] (6.460308871834425,-0.7916713990770411)-- (6.904971220296668,-0.5433014386747071);
\draw [line width=2.pt] (7.289942440593343,0.43775425175875726)-- (7.029738409150239,1.4123366240729276);
\draw [line width=2.pt] (4.326729096182713,1.6113157328557417)-- (4.286787942883167,1.2347277160314496);
\draw [line width=2.pt] (4.201199757241282,0.846727941121573)-- (4.286787942883167,1.2347277160314496);
\draw [line width=2.pt] (4.109905692556604,0.4644340452544886)-- (3.9948998146598345,-0.08234732725278203);
\draw [line width=2.pt] (3.9948998146598345,-0.08234732725278203)-- (3.7412197592825547,-0.5874525318183418);
\draw (3.7412197592825547,-0.5874525318183418)-- (3.602995105438134,-0.6152350775293627);
\draw (3.602995105438134,-0.6152350775293627)-- (3.797535414384365,-0.7173687397261338);
\draw (3.797535414384365,-0.7173687397261338)-- (3.7412197592825547,-0.5874525318183418);
\draw (4.109905692556604,0.4644340452544886)-- (4.044606349239949,0.5791076440889966);
\draw (4.044606349239949,0.5791076440889966)-- (4.224110851387556,0.5314840006620807);
\draw (4.224110851387556,0.5314840006620807)-- (4.109905692556604,0.4644340452544886);
\draw (4.201199757241282,0.846727941121573)-- (4.089125939485931,0.7883255128430241);
\draw (4.089125939485931,0.7883255128430241)-- (4.269724677042619,0.7408955413634898);
\draw (4.269724677042619,0.7408955413634898)-- (4.201199757241282,0.846727941121573);
\draw (4.326729096182713,1.6113157328557417)-- (4.23041241155347,1.7121687720933303);
\draw (4.23041241155347,1.7121687720933303)-- (4.449615103506464,1.6796139168527866);
\draw (4.449615103506464,1.6796139168527866)-- (4.326729096182713,1.6113157328557417);
\draw (6.764002582802883,1.6447497983062986)-- (6.824233096865494,1.7759846517131102);
\draw (6.824233096865494,1.7759846517131102)-- (6.879251804492295,1.5694262425781416);
\draw (6.879251804492295,1.5694262425781416)-- (6.764002582802883,1.6447497983062986);
\draw (6.096283812668379,-0.7556883937765311)-- (6.05834808703073,-0.8905940416845036);
\draw (6.05834808703073,-0.8905940416845036)-- (6.228334966136731,-0.7968081773501582);
\draw [shift={(6.268947884403487,2.106519246281936)},line width=2.pt]  plot[domain=4.690375628370565:4.988939260092265,variable=\t]({1.*0.6003506774523132*cos(\t r)+0.*0.6003506774523132*sin(\t r)},{0.*0.6003506774523132*cos(\t r)+1.*0.6003506774523132*sin(\t r)});
\draw [line width=2.pt] (6.764002582802883,1.6447497983062986)-- (6.432694823331695,1.5295860380958797);
\draw [line width=2.pt] (6.878419239963564,1.9790956941175675)-- (6.65331050912446,2.822230213260391);
\draw (7.255404858105066,1.975413364799805) node[anchor=north west] {$\ell_1$};
\draw (4.4563424859384,2.6) node[anchor=north] {$\ell_2$};
\draw (4.5,1.2) node[anchor=north west] {$\ell_3$};
\draw (3.3771859087175162,-1.0035084369036857) node[anchor=north west] {$\ell_5$};
\draw (6.3,-1.) node[anchor=north west] {$\ell_4$};
\draw (4.6,-.6) node[anchor=north west] {$B_3$};
\draw (7.335264287641879,0.7726034297723576) node[anchor=north west] {$B_3$};
\draw (3.1,0.46) node[anchor=north west] {$B_1$};
\draw (3.3,1.7358350761893612) node[anchor=north west] {$B_1$};
\draw (5.602946349235588,0.5590203571973904) node[anchor=north west] {$B_2$};
\draw (5.572735493540809,2.3) node[anchor=north west] {$B_4$};

\node [draw=white, fill=white,rectangle] (a) at (5,-2.8) {(a)};
\end{tikzpicture}
\hspace{2cm}
\begin{tikzpicture}[scale=0.8]
\tikzstyle{every node}=[circle, draw=black, very thick, fill=white, inner sep=0pt, minimum width=4pt];

\draw[very thick](-3,1.46)--(2.5,1.1);
\draw[very thick](-2.8,0.35)--(2.9,-0.5);
\draw[very thick](-3.3,-0.5)--(2.5,-2.8);
\draw[very thick](-1.4,-2.3)--(3.5,-0.1);
\draw[very thick](1.1,2.5)--(1.7,-2.2);

\path (-0.4,2) node (u4l) {} ;
\path (0.2,1.7) node (u4r) {} ;
\draw[very thick](u4l)--(u4r);
\node [draw=white, fill=white,rectangle] (a) at (-0.8,2.3) {$B_4$};
\node [draw=white, fill=white,rectangle] (a) at (1.4,2.7) {$\ell_1$};

\draw[dashed](u4l)--(1,2.2);
\draw[dashed](u4r)--(1.1,1.82);
\draw[dashed](u4l)--(-0.5,1.3);
\draw[dashed](u4r)--(0.15,1.3);

\path (-1.3,0.8) node (u1l) {} ;
\path (-0.7,0.7) node (u1r) {} ;
\draw[very thick](u1l)--(u1r);
\node [draw=white, fill=white,rectangle] (a) at (-0.2,0.7) {$B_1$};
\node [draw=white, fill=white,rectangle] (a) at (-3.3,1.6) {$\ell_2$};

\draw[dashed](u1l)--(-1.25,1.3);
\draw[dashed](u1r)--(-0.64,1.3);
\draw[dashed](u1l)--(-1.4,0.2);
\draw[dashed](u1r)--(-0.75,0.2);
\draw[dashed](u1l)--(-2.03,-1);
\draw[dashed](u1r)--(-1.45,-1.2);

\path (-0.4,-0.7) node (u2l) {} ;
\path (0.3,-0.8) node (u2r) {} ;
\draw[very thick](u2l)--(u2r);
\node [draw=white, fill=white,rectangle] (a) at (-0.05,-0.4) {$B_2$};
\node [draw=white, fill=white,rectangle] (a) at (-3.1,0.5) {$\ell_3$};

\draw[dashed](u2l)--(-0.3,0);
\draw[dashed](u2r)--(0.45,-0.1);
\draw[dashed](u2l)--(0,-1.6);
\draw[dashed](u2r)--(0.58,-1.4);

\path (2.67,-1.1) node (u3r) {} ;
\path (2.3,-1.5) node (u3l) {} ;
\draw[very thick](u3l)--(u3r);
\node [draw=white, fill=white,rectangle] (a) at (3.15,-1.1) {$B_3$};
\node [draw=white, fill=white,rectangle] (a) at (-3.1,-0.5) {$\ell_5$};
\node [draw=white, fill=white,rectangle] (a) at (-1.7,-2) {$\ell_4$};

\draw[dashed](u3l)--(1.93,-0.8);
\draw[dashed](u3r)--(2.43,-0.6);
\draw[dashed](u3l)--(1.66,-1.58);
\draw[dashed](u3r)--(1.65,-1.21);
\draw[dashed](u3l)--(1.9,-2.5);
\draw[dashed](u3r)--(2.05,-2.6);

\node [draw=white, fill=white,rectangle] (a) at (0,-2.8) {(b)};

\end{tikzpicture}
\caption{A 4-body sliding pair chain (a) that is modelled as a point-line framework (b).
%The lines in (b) have fixed normals, but parallel movements of lines are permitted.
A dashed line between a point and a line indicates a point-line distance constraint, and a solid line between two points indicates  a point-point distance constraint.}
\label{fig:DCa}
\end{center}
\end{figure}
%%%%%%%%%%%%%%%%%
%%%%%%%%%%%

%%%%%%%%%%%%%%%%%%%%%%%%%%%%%%%

\section{Rigidity preserving transformations}
In this section we explain how the rigidity of point-hyperplane frameworks
 is related to the rigidity of
bar-joint frameworks on the sphere or in Euclidean space by using a
rigidity preserving transformation.

We use $\mathbb{R}^d$ to denote $d$-dimensional Euclidean space
equipped with the standard inner product $\langle \cdot,
\cdot\rangle$ and $\mathbb{S}^d$ to denote the unit $d$-dimensional
sphere centered at the origin, and consider
$\mathbb{S}^d\subset \mathbb{R}^{d+1}$. Let ${\bf e}\in
\mathbb{R}^{d+1}$ be the vector whose last coordinate is one and the
others are equal to zero, and let $\mathbb{A}^{d}=\{x\in
\mathbb{R}^{d+1} : \langle x, {\bf e}\rangle=1\}$ be the hyperplane
of $\mathbb{R}^{d+1}$ with ${\bf e}\in \mathbb{A}^{d}$ and with
normal ${\bf e}$. We also use $\mathbb{S}^{d}_{>0}=\{x\in
\mathbb{S}^{d} : \langle x, {\bf e}\rangle> 0\}$,
$\mathbb{S}^{d}_{\geq 0}=\{x\in \mathbb{S}^{d} : \langle x, {\bf
e}\rangle\geq 0\}$ and put
$\mathbb{S}^{d}_{<0}=\mathbb{S}^{d}\setminus \mathbb{S}^{d}_{\geq
0}$. The  {\em equator} of $\mathbb{S}^{d}$ is defined to be
$\mathbb{S}^d_{\geq 0} \setminus \mathbb{S}^d_{>0}$ and is denoted
by $Q$. In the following discussion, the last coordinate in
$\mathbb{R}^{d+1}$ will have a special role (as one may expect from
the definitions of $\mathbb{A}^d$ and $\mathbb{S}^d_{>0}$). Hence we
sometimes refer to a coordinate of a point in $\mathbb{R}^{d+1}$ as
a pair $(x,x')\in \mathbb{R}^d\times \mathbb{R}$, where $x'$ denotes
the last coordinate. For example, a point in $\mathbb{A}^d$ is
denoted by $(x,1)$ with $x\in \mathbb{R}^d$.

%%%%%%%%%%%%%%%%%%%%%%%%%%%%%%%%%

\subsection{Euclidean space vs~spherical space}\label{euclid/sphere}
It is a classical fact that there is a one-to-one correspondence
between frameworks in $\mathbb{R}^d$ and those in
$\mathbb{S}^d_{>0}$ at the level of infinitesimal motions.
% (see also \cite{C&W} for a discussion on global rigidity).
 Since the
transformation between these two spaces is the starting point of our
study, we first give a detailed description of this transformation.

By a {\em framework in a space $M$} we mean a pair $(G,p)$ of an
undirected finite graph $G=(V,E)$ and a map $p:V\rightarrow M$. The
most widely studied examples are frameworks $(G,p)$ in
$\mathbb{R}^d$, where $p$ is a map from $V$ to $\mathbb{R}^d$. In
this space we are interested in whether there is a different
framework (up to congruences) in some neighborhood of $p$ satisfying the same system of length
constraints:
\[
\|p_i-p_j\|=\text{const} \qquad (ij\in E).
\]
A common strategy to answer this question is to take the derivative of the square of each length constraint to get the first-order length constraint,
\begin{equation}
\label{eq:length_inf}
\langle p_i-p_j, \dot{p}_i-\dot{p}_j\rangle=0 \qquad (ij\in E),
\end{equation}
and then  check the dimension of the solution space with variables
$\dot{p}$. We say that $\dot{p}:V\rightarrow \mathbb{R}^d$ is an
{\em infinitesimal motion} of $(G,p)$ if $\dot{p}$ satisfies
(\ref{eq:length_inf}), and $(G,p)$ is called {\em infinitesimally
rigid} if the dimension of the space of infinitesimal motions of
$(G,p)$ is equal to ${d+1\choose 2}$ (assuming that the points
$p(V)$ affinely span $\mathbb{R}^d$). %It follows that infinitesimal rigidity can be determined using the rigidity matrix associated to (\ref{eq:length_inf}).

Less well-known but still widely appearing models of frameworks are
those in $\mathbb{S}^d$. In $\mathbb{S}^d$ the spherical distance between
two points is determined by their inner product. Hence we are
interested in the solutions to the system of inner product
constraints:
\[
\langle p_i, p_j\rangle=\text{const} \qquad (ij\in E).
\]
Since $p_i$ is constrained to be on $\mathbb{S}^d$, we also have the
extra constraints
\[
\langle p_i, p_i\rangle=1 \qquad (i\in V).
\]
Again, taking the derivative, we can obtain the system of
first-order inner product constraints:
\begin{align}
\label{eq:inner_inf}
\langle p_i, \dot{p}_j\rangle+\langle p_j, \dot{p}_i\rangle&=0 \qquad (ij\in E) \\
\langle p_i, \dot{p}_i\rangle&=0 \qquad (i\in V). \label{eq:scale}
\end{align}
A map $\dot{p}:V\rightarrow \mathbb{R}^{d+1}$ is  said to be an {\em
infinitesimal motion} of $(G,p)$ if it satisfies this system of
linear constraints, and the framework $(G,p)$ is {\em
infinitesimally rigid} if the dimension of its space of
infinitesimal motions is equal to ${d+1\choose 2}$ (assuming the
points $p(V)$ linearly span $\mathbb{R}^{d+1}$).
 For each $x\in \mathbb{S}^d$, let $$T_x\mathbb{S}^d=\{m\in \mathbb{R}^{d+1}\mid \langle x,m\rangle=0\}$$ be
 the tangent hyperplane at $x$.
 Then we may give an equivalent definition for an infinitesimal motion of $(G,p)$ as  a map
 %$\dot{p}:V\ni i\mapsto \dot{p}_i\in T\mathbb{S}^d_{p_i}$
$ i\mapsto \dot{p}_i\in T_{p_i}\mathbb{S}^d$
 which satisfies (\ref{eq:inner_inf}) for all $i\in V$.

In order to relate the rigidity models in $\mathbb{R}^d$ and $\mathbb{S}^d$, a key step is to
identify $\mathbb{R}^d$ with the hyperplane $\mathbb{A}^d$ in
$\mathbb{R}^{d+1}$. For a framework $(G,p)$ in $\mathbb{A}^{d}$, we
define an infinitesimal motion as a map
%$\dot{p}:V\ni i\mapsto \dot{p}_i\in T\mathbb{A}^d_{p_i}$
$i\mapsto \dot{p}_i\in T_{p_i}\mathbb{A}^d$ satisfying
(\ref{eq:length_inf}), where
$$T_{x}\mathbb{A}^d=\{m\in \mathbb{R}^{d+1}\mid \langle {\bf e}, m\rangle=0\}$$ for all $x\in
\mathbb{A}^d$.  Then the space of infinitesimal motions $\dot p$ of a framework $(G,p)$ in $\mathbb{R}^d$ coincides with the space of
infinitesimal motions $\dot{\hat p}$ of the framework $(G, \hat p)$ in
$\mathbb{A}^{d}$,  when we take $\hat p_i=(p_i,1)$ and $ \dot {\hat p}_i=(\dot
p_i,0)$ for all $i \in V$. Hence in the subsequent discussion we may
consider the infinitesimal rigidity of frameworks in $\mathbb{A}^d$
rather than  $\mathbb{R}^d$.

We can now describe the rigidity preserving transformation from
${\mathbb S}^d$ to ${\mathbb A}^d$. Let
$\phi:\mathbb{A}^d\rightarrow \mathbb{S}^d_{>0}$ be the central
projection, that is,
\begin{equation}
\label{eq:phi1}
\phi(x)=\frac{x}{\|x\|}\qquad (x\in \mathbb{A}^d).
\end{equation}
For each $x\in \mathbb{A}^d$, define $\psi_x:T_x\mathbb{A}^d\rightarrow T_{\phi(x)}\mathbb{S}^d$ by
\[
\psi_x(m)=\frac{m-\langle m, x\rangle {\bf e}}{\|x\|} \qquad (m\in T_x\mathbb{A}^d).
\]
The image of $\psi_x$ indeed lies in $T_{\phi(x)}\mathbb{S}^d$ because
\[
\langle \phi(x), \psi_x(m)\rangle=
\frac{\langle x, m-\langle m, x\rangle {\bf e} \rangle}{\|x\|}
=\frac{\langle x, m\rangle- \langle m, x\rangle}{\|x\|}=0
\]
where $\langle x, \langle m, x\rangle {\bf e} \rangle=\langle m, x\rangle$ follows from the fact that
the last coordinate of $x\in \mathbb{A}^d$ is equal to one.

Given a framework $(G,p)$ in $\mathbb{A}^d$ and an infinitesimal
motion $\dot{p}$ of $(G,p)$, a simple calculation shows that
\[
\langle \phi(p_i),
\psi_{p_j}(\dot{p}_j)\rangle+\langle \phi(p_j),
\psi_{p_i}(\dot{p}_i)\rangle=-\frac{\langle p_i-p_j,
\dot{p}_i-\dot{p}_j\rangle}{\|p_i\|\|p_j\|}= 0
\]
for all $ij\in E$, and hence $\psi(\dot p):=(\psi_{p_i}(\dot{p}_i))_{i\in V}$ is
an infinitesimal motion of $(G,\phi\circ p)$ in $\mathbb{S}^d$.
Since $\psi_x$ is bijective for all $x\in \mathbb{A}^d$, it is invertible. This implies that $\psi$ is invertible and this gives us
an isomorphism between the spaces of infinitesimal motions of
$(G,p)$ and $(G,\phi\circ p)$ (see also Figure~\ref{fig:transfer}). In particular, we have the following
result discussed in  \cite{Izmestiev,SaliolaWh,BSWWSphere}.

\begin{proposition}
%[Schulze and Whiteley \cite{BSWWSphere}]
\label{prop:sdrd}
A bar-joint framework $(G,p)$ is infinitesimally rigid in
$\mathbb{A}^d$ if and only if $(G,\phi \circ p)$ is  infinitesimally
rigid in $\mathbb{S}_{>0}^d$.
\end{proposition}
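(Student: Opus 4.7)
The plan is to leverage the explicit linear map $\psi$ constructed in the discussion immediately preceding the proposition, since that discussion has already done most of the work. The one computation displayed there shows that $\psi$ carries any solution of the Euclidean first-order length constraints (\ref{eq:length_inf}) on $(G,p)$ to a solution of the spherical first-order inner-product constraints (\ref{eq:inner_inf}) on $(G,\phi\circ p)$. So the core of the argument reduces to three pieces: (i) confirming that $\psi$ is a linear isomorphism between the ambient tangent spaces; (ii) concluding from this that the two infinitesimal-motion spaces have the same dimension; and (iii) checking that the affine-span hypothesis in $\mathbb{A}^d$ translates to the linear-span hypothesis in $\mathbb{R}^{d+1}$ required on the sphere side.

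First I would spell out an explicit inverse for $\psi_x$. The defining formula is manifestly linear in $m$, and given $n\in T_{\phi(x)}\mathbb{S}^d$ a direct guess is $m=\|x\|\,n+\lambda {\bf e}$; requiring $\langle m,{\bf e}\rangle=0$ forces $\lambda=-\|x\|\langle n,{\bf e}\rangle$, and substituting back into the defining formula recovers $n$, using $\langle x,{\bf e}\rangle=1$ and $\langle \phi(x),n\rangle=0$. Hence each $\psi_x$ is a linear bijection between tangent spaces of equal dimension $d$, and therefore the componentwise map $\psi=(\psi_{p_i})_{i\in V}$ is a linear bijection between $\bigoplus_{i\in V}T_{p_i}\mathbb{A}^d$ and $\bigoplus_{i\in V}T_{\phi(p_i)}\mathbb{S}^d$ that restricts to a linear isomorphism between the two solution spaces.

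Next I would translate the rigidity criterion. Because $\psi$ is a linear isomorphism between the infinitesimal-motion spaces, their dimensions agree; in particular one equals $\binom{d+1}{2}$ iff the other does. It remains only to match the two genericity hypotheses. Every $p_i\in\mathbb{A}^d$ satisfies $\langle p_i,{\bf e}\rangle=1$, so $\{p_i\}_{i\in V}$ affinely spans $\mathbb{A}^d$ iff it linearly spans $\mathbb{R}^{d+1}$, and since $\phi(p_i)=p_i/\|p_i\|$ is a positive rescaling of $p_i$ the same equivalence holds with $\phi(p_i)$ in place of $p_i$. Combining these three pieces gives the claimed equivalence.

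The one mildly delicate point is writing down and verifying the inverse of $\psi_x$ cleanly; every other step is formal bookkeeping on formulas already displayed in the paper. I expect no serious obstacle beyond keeping track of which coordinate of $\mathbb{R}^{d+1}$ plays which role.
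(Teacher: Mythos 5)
Your proposal is correct and follows essentially the same route as the paper: the paper's argument is precisely the discussion preceding the proposition, which shows $\psi$ carries infinitesimal motions to infinitesimal motions and asserts that each $\psi_x$ is bijective, hence $\psi$ is an isomorphism of motion spaces. Your explicit inverse $m=\|x\|\,n-\|x\|\langle n,{\bf e}\rangle{\bf e}$ and the check that the affine-span and linear-span hypotheses match are small details the paper leaves implicit, but they do not change the argument.
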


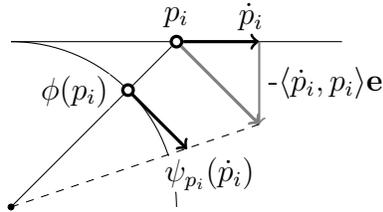
\begin{figure}[htp]
\begin{center}
\begin{tikzpicture}[scale=1.1]
\tikzstyle{every node}=[circle, draw=black, very thick, fill=white, inner sep=0pt, minimum width=4pt];

\draw (0,2) arc (90:0:2cm and 2cm);
\draw (0,2)--(4,2);

\path (2,2) node (pi) {} ;
\node [draw=white, fill=white,rectangle] (a) at (2,2.3) {$p_i$};
\draw[very thick,->](pi)--(3,2);
\node [draw=white, fill=white,rectangle] (a) at (2.9,2.3) {$\dot p_i$};

\filldraw[draw=black] (0,0) circle (1pt);
\draw(0,0)--(pi);
\draw[very thick,->](1.414,1.414)--(2.128,0.7);

\node [draw=white, fill=white,rectangle] (a) at (2.4,0.4) {$\psi_{p_i}(\dot{p}_i)$};

\draw[thick,gray,->](3,2)--(3,1);
\node [draw=white, fill=white,rectangle] (a) at (3.8,1.5) {-$\langle \dot p_i, p_i\rangle {\bf e}$};

\draw[very thick, gray,->](pi)--(3,1);

\path (1.414,1.414) node (psipi) {} ;
\node [draw=white, fill=white,rectangle] (a) at (0.77,1.4) {$\phi(p_i)$};
\draw[dashed](0,0)--(3,1);

\end{tikzpicture}
\caption{Transfer of infinitesimal motions between $\mathbb{A}^d$ and $\mathbb{S}_{>0}^d$.}
\label{fig:transfer}
\end{center}
\end{figure}

In the following, we will extend the correspondence between infinitesimally rigid frameworks in $\mathbb{R}^d$ and $\mathbb{S}^d$ given in Proposition~\ref{prop:sdrd} further by allowing points to lie on the equator of the sphere.
Note that, in the transformation described above, a point on the equator of $\mathbb{S}^d$ corresponds to a `point at infinity' in $\mathbb{A}^d$.

%%%%%%%%%%%%%%%%%%%%%%%%%%%%%%%%%%%%%%%%%%%%%%%%%%%%%%%%%%%%%%

\subsection{Point-hyperplane vs~bar-joint}\label{subsec:ptln}

The frameworks considered in Section \ref{euclid/sphere} model a
structure consisting of rigid bars and joints. Such frameworks are
usually called  {\em bar-joint frameworks}.
A different kind of framework
consisting of points and lines in ${\mathbb R}^2$ mutually linked by
distance or angle constraints (see
Figure~\ref{fig:DCa}(b)
for example), usually referred to as {\em
point-line frameworks}, were introduced in \cite{Owen}. A combinatorial characterization for generic rigidity of such frameworks was recently provided in \cite{JO}.  We will
consider the $d$-dimensional generalisation of these frameworks and
refer to them as {\em point-hyperplane frameworks}. We will use the
rigidity preserving transformation given in Section~\ref{euclid/sphere} to establish an equivalence (at the level of infinitesimal rigidity) between a
point-hyperplane framework in $\mathbb{R}^d$
 and a bar-joint framework in $\mathbb{R}^d$ in
which a given set of joints lie on the same hyperplane. The idea is to use this
 transformation to show that these frameworks are equivalent to a  pair of congruent spherical
 frameworks.

Formally, we define a {\em point-hyperplane framework} in $\mathbb{R}^d$
to be a triple
$(G, p, \ell)$ where $G=(V_P\cup V_L,
E)$ is a {\em point-hyperplane graph}, i.e. a graph $G$ in which the vertices have been partitioned into two sets $V_P, V_L$ corresponding to points and hyperplanes, respectively, and  each edge in $E$ indicates a point-point distance constraint, a
point-hyperplane distance constraint, or a hyperplane-hyperplane
angle constraint. Thus the edge set $E$ is partitioned into three subsets $E_{PP},E_{PL},E_{LL}$ according to the types of end-vertices of each edge.
The point-configuration and the line-configuration are specified by
$p:V_P\to \mathbb{R}^d$, and $\ell=(a,r):V_L\to\mathbb{S}^{d-1}\times \mathbb{R}$, where the hyperplane associated to each $j\in V_L$ is given by
$\{x\in \mathbb{R}^d\,:\,\langle x,a_j\rangle +r_j=0\}$.
For $i\in V_P$ and $j,k\in V_L$, the distance between the point $p_i$ and the hyperplane $\ell_j$ is equal to $|\langle p_i, a_j\rangle+r_j|$, and  the angle between the two hyperplanes $\ell_j,\ell_k$  is determined by
$\langle a_j, a_k\rangle$. Hence the system of constraints can be
written as
\begin{align}
\label{eq:line_const1_euc}
\| p_i-p_j\|&=\text{const} && (ij\in E_{PP}) \\
|\langle p_i, a_j\rangle+r_j|&=\text{const} && (ij\in E_{PL}) \label{eq:line_const2_euc}\\
\langle a_i, a_j\rangle&=\text{const} && (ij\in E_{LL}). \label{eq:line_const3_euc}
\end{align}
Since  $a_j\in\mathbb{S}^{d-1}$,
we also have the constraint
\begin{align*}
%\label{eq:affine_inf}
%\langle p_i, {\bf e}\rangle&=1 & (i\in V_P) \\
\langle a_i, {a}_i\rangle&=1 & (i\in V_L).
%\label{eq:a_inf}
\end{align*}
Taking the derivative we get the system of first order constraints
\begin{align}
\langle p_i - p_j, \dot{p}_i-\dot{p}_j\rangle&=0 && (ij\in E_{PP}) \label{eq:line_inf1_euc} \\
\langle p_i, \dot{a}_j\rangle+\langle \dot{p}_i, a_j\rangle +\dot{r}_j&=0 && (ij\in E_{PL}) \label{eq:line_inf2_euc}\\
\langle a_i, \dot{a}_j\rangle+\langle \dot{a}_i, a_j\rangle&=0 && (ij\in E_{LL}) \label{eq:line_inf3_euc}\\
%\end{align}
%\begin{equation}
%\label{eq:line_inf}
%\langle p_i, \dot{\ell}_j\rangle+\langle \dot{p}_i, \ell_j\rangle=0\qquad (ij\in E, i\in V_P, j\in V_L).
%\end{equation}
%Note that, since $p_i\in\mathbb{A}^d$ and $a_j\in\mathbb{S}^{d-1}$,
%we also have the constraints
%\begin{align}
%\label{eq:affine_inf}
%\dot{p}_i&\in T\mathbb{A}^d_{p_i}&&(i\in V_P) \\
\langle a_i, \dot{a}_i\rangle&=0 && (i\in V_L).
\label{eq:a_inf_euc}
\end{align}

A map $(\dot{p},\dot\ell)$ is  said to be an {\em
infinitesimal motion} of $(G,p,\ell)$ if it satisfies this system of
linear constraints, and
$(G,p,\ell)$ is {\em infinitesimally
rigid} if the dimension of the space of its infinitesimal motions is
equal to ${d+1\choose 2}$, assuming the points $p(V_P)$ and hyperplanes $\ell(V_L)$ affinely span $\mathbb{R}^d$.\footnote{This formulation is slightly different to the formulation for point-line frameworks given in \cite{JO}, but they are easily seen to be  equivalent.}

In order to use the
rigidity preserving transformation from Section
\ref{euclid/sphere}, we will first translate the point-hyperplane framework $(G,p,\ell)$ to a point-hyperplane framework $(G,\hat{p},\ell)$ in affine space
$\mathbb{A}^d$ by taking $\hat{p}_i=(p_i,1)$ for all $i\in V_p$. The system of constraints (\ref{eq:line_inf1_euc})-(\ref{eq:a_inf_euc}) then becomes:

\begin{align}
\langle {\hat{p}}_i - {\hat{p}}_j, \dot{\hat{p}}_i-\dot{\hat{p}}_j\rangle&=0 && (ij\in E, i, j\in V_P) \label{eq:line_inf1} \\
\langle \hat{p}_i, \dot{\ell}_j\rangle+\langle \dot{\hat{p}}_i, \ell_j\rangle&=0 && (ij\in E, i\in V_P, j\in V_L) \label{eq:line_inf2}\\
\langle a_i, \dot{a}_j\rangle+\langle \dot{a}_i, a_j\rangle&=0 && (ij\in E, i, j\in V_L) \label{eq:line_inf3}\\
%\end{align}
%\begin{equation}
%\label{eq:line_inf}
%\langle \hat{p}_i, \dot{\ell}_j\rangle+\langle \dot{\hat{p}}_i, \ell_j\rangle=0\qquad (ij\in E, i\in V_P, j\in V_L).
%\end{equation}
%Note that, since $\hat{p}_i\in\mathbb{A}^d$ and $a_j\in\mathbb{S}^{d-1}$,
%we also have the constraints
%\begin{align}
%\label{eq:affine_inf}
%\dot{\hat{p}}_i&\in T\mathbb{A}^d_{\hat{p}_i}&&(i\in V_P) \\
\langle \dot{\hat{p}}_i,\bf{e}\rangle&=0 &&(i\in V_P) \\
\langle a_i, \dot{a}_i\rangle&=0 && (i\in V_L).
\label{eq:a_inf}
\end{align}

%One can see that the linear constraints for an infinitesimal motion of a point-hyperplane framework in
%$\mathbb{A}^d$ are almost identical to those for a bar-joint framework in
%$\mathbb{S}^d$. This can be made explicit by using the maps $\psi$
%and $\phi$ given in Section \ref{euclid/sphere}.
We now relate this system of linear equations with that for bar-joint frameworks.
We first observe
that $r_j$ does not appear in (\ref{eq:line_inf2}) because
$\dot{\hat{p}}_i\in T_{\hat{p}_i}\mathbb{A}^d$ (and hence the last coordinate of
$\dot{\hat{p}}_i$ is equal to zero). This implies that the last coordinate
of $\ell_j$ is not important when analyzing the infinitesimal
rigidity of $(G,\hat{p},\ell)$, and we may always assume that $\ell$ is a
map with $\ell:V_L\rightarrow \mathbb{S}^{d-1}\times \{0\}$. Under
this assumption, we can regard each $\ell_i$ as a point on the
equator $Q$ of $\mathbb{S}^{d}$ by identifying
$\mathbb{S}^{d-1}\times \{0\}$ with $Q$.
%we may each $\ell_i$ as a point in $R$., which means that
%a point-line framework is identified with $(G,p,\ell)$ as a framework in $\mathbb{R}^{d+1}$ with
%$p_i \in \mathbb{A}^d \ (i\in V_P)$ and $\ell_uR\subset \mathbb{S}^{d} & (i\in V_L)
%\end{cases}
%\]
Hence (\ref{eq:a_inf}) can be written as $\langle \ell_j, \dot{\ell}_j\rangle=0$, i.e.
$\dot{\ell}_j\in T_{\ell_j}\mathbb{S}^d$ for all $j\in V_L$, and
(\ref{eq:line_inf3}) gives
\[
\langle \ell_i, \dot{\ell}_j\rangle+\langle \dot{\ell}_i, \ell_j\rangle=0
\]
for all $ij\in E$ with $i, j\in V_L$.
We have already seen that (\ref{eq:line_inf1}) can be rewritten as
\[
\langle \phi(\hat{p}_i),
\psi_{\hat{p}_j}(\dot{\hat{p}}_j)\rangle+\langle \phi(\hat{p}_j),
\psi_{\hat{p}_i}(\dot{\hat{p}}_i)\rangle=\frac{\langle \hat{p}_i-\hat{p}_j,
\dot{\hat{p}}_i-\dot{\hat{p}}_j\rangle}{\|\hat{p}_i\|\|\hat{p}_j\|}= 0
\]
for all $ij\in E$ with $i,j\in V_P$. A similar calculation shows that
(\ref{eq:line_inf2}) can be rewritten as
\[
\langle \phi(\hat{p}_i), \dot{\ell}_j\rangle+\langle \psi_{\hat{p}_i}(\dot{\hat{p}}_i), \ell_j \rangle=
\frac{\langle \hat{p}_i, \dot{\ell}_j\rangle+\langle \dot{\hat{p}}_i, \ell_j\rangle}{\|\hat{p}_i\|}=0
\]
for all $ij\in E$ with $i\in V_P$ and $j\in V_L$.

These equations imply that $(\dot{\hat{p}}, \dot{\ell})$ is an infinitesimal motion of $(G,\hat{p},\ell)$ if and only if
$\dot{q}$ is  an infinitesimal motion of $(G,q)$, where
$(G,q)$ is the bar-joint framework in $\mathbb{S}_{\geq 0}^d$ given by
\begin{equation}
\label{eq:phi2}
q_i=\begin{cases}
\phi(\hat{p}_i) & (i\in V_P) \\
(a_i,0) & (i\in V_L),
\end{cases}
\end{equation}
and
%$\dot{q}:V\ni i \rightarrow \dot{q}_i\in T\mathbb{S}_{q_i}$
$\dot{q}_i\in T_{q_i}\mathbb{S}^d$ is given by
\begin{equation}
\label{eq:phi3}
\dot{q}_i=\begin{cases}
\psi_{\hat{p}_i}(\dot{\hat{p}}_i) & (i\in V_P) \\
\dot{\ell}_i & (i\in V_L).
\end{cases}
\end{equation}
Since each $\psi_x$ is bijective and hence invertible, this gives us an isomorphism between the spaces of infinitesimal motions of
$(G,\hat{p},\ell)$ and $(G, q)$. In particular, if we denote the map $q$ given in (\ref{eq:phi2}) by $\phi\circ
(\hat{p},\ell)$, then
 we  have the
following result.

\begin{theorem}\label{thm:transfer}
Let $(G,\hat{p},\ell)$ be a point-hyperplane framework with $G=(V_P\cup V_L, E)$,
$\hat{p}:V_P\rightarrow \mathbb{A}^d$ and $\ell=(a,r): V_L\rightarrow \mathbb{S}^{d-1}\times \mathbb{R}$.
Let $(G,
\phi\circ (\hat{p},\ell))$ be the bar-joint framework in $\mathbb{S}^d_{\geq 0}$
obtained  by central projection of each $\hat{p}_i \ (i\in V_P)$ and by
regarding each hyperplane $\ell_i=(a_i,r_i) \ (i\in V_L)$ as the
point $(a_i,0)$ on the equator of $\mathbb{S}^d$.
Then $(G,\hat{p},\ell)$ is infinitesimally rigid if and only if $(G, \phi\circ (\hat{p},\ell))$
is infinitesimally rigid.
\end{theorem}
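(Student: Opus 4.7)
The proof is essentially a consolidation of the computations preceding the statement, so the plan is to package them into a clean correspondence. I would first define the linear map
\[
F : (\dot{\hat p}, \dot\ell) \longmapsto \dot q
\]
exactly as in (\ref{eq:phi3}), namely $\dot q_i = \psi_{\hat p_i}(\dot{\hat p}_i)$ for $i \in V_P$ and $\dot q_i = \dot\ell_i$ for $i \in V_L$. Then I would verify that $F$ is a linear bijection from the space of pairs $(\dot{\hat p},\dot\ell)$ satisfying only the tangency conditions (i.e.\ $\dot{\hat p}_i \in T_{\hat p_i}\mathbb{A}^d$ and $\langle a_i,\dot a_i\rangle = 0$) to the space of maps $\dot q$ with $\dot q_i \in T_{q_i}\mathbb{S}^d$. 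Bijectivity on the $V_P$ side follows because each $\psi_{\hat p_i}$ is a linear isomorphism (which was verified just before Proposition~\ref{prop:sdrd}); on the $V_L$ side it is immediate from the identification of $\mathbb{S}^{d-1}\times\{0\}$ with the equator $Q$ together with the observation that $\langle (a_i,0),(\dot a_i,\dot r_i)\rangle = \langle a_i,\dot a_i\rangle$, so the tangency constraint at $q_i \in Q$ is exactly (\ref{eq:a_inf}) while $\dot r_i$ plays the role of the last coordinate of $\dot q_i$.

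Next I would argue that $F$ restricts to a bijection between the spaces of infinitesimal motions. This is already done edge by edge in the text: the three displayed identities immediately above the theorem statement show that the left-hand side of each of (\ref{eq:line_inf1}), (\ref{eq:line_inf2}), (\ref{eq:line_inf3}) equals the corresponding spherical constraint $\langle q_i,\dot q_j\rangle + \langle \dot q_i,q_j\rangle$, up to a nonzero scalar $1/(\|\hat p_i\|\|\hat p_j\|)$ or $1/\|\hat p_i\|$. Since these scalars are always nonzero (as $\hat p_i \in \mathbb{A}^d$ has last coordinate $1$), $(\dot{\hat p},\dot\ell)$ satisfies the point-hyperplane system (\ref{eq:line_inf1})--(\ref{eq:a_inf}) if and only if $F(\dot{\hat p},\dot\ell)$ satisfies the spherical bar-joint system (\ref{eq:inner_inf})--(\ref{eq:scale}) for $(G,q)$.

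Finally, since both frameworks are called infinitesimally rigid exactly when their space of infinitesimal motions has dimension $\binom{d+1}{2}$, and $F$ is a linear isomorphism between these spaces, the two rigidity conditions are equivalent. The only mildly delicate point is to check that the spanning hypothesis transfers correctly: if $p(V_P) \cup \ell(V_L)$ affinely spans $\mathbb{R}^d$ then the vectors $\{\hat p_i : i \in V_P\} \cup \{(a_j,0) : j \in V_L\}$ linearly span $\mathbb{R}^{d+1}$, hence so does $q(V)$ (since central projection preserves linear spans through the origin). I would record this in one short sentence.

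The only real potential obstacle I can foresee is subtle dimension bookkeeping at the boundary: one must be sure that no infinitesimal motion is ``lost'' when encoding hyperplane data $(a_j,r_j) \in \mathbb{S}^{d-1}\times \mathbb{R}$ as a single point $(a_j,0)$ on the equator, because $r_j$ itself does not appear in $q$. The resolution is that $r_j$ is not lost but relabelled: the free variable $\dot r_j$ becomes the last coordinate of $\dot q_j$, which is unconstrained by the equator tangency $\langle (a_j,0),\dot q_j\rangle = 0$. Once this correspondence of variables is spelled out, the rest of the proof is routine and follows from the explicit identities already derived in the preceding subsection.
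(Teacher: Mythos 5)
Your proposal is correct and follows essentially the same route as the paper: the paper's justification of Theorem~\ref{thm:transfer} is exactly the chain of identities derived in Section~\ref{subsec:ptln}, showing that the map in (\ref{eq:phi3}) is a linear isomorphism carrying the system (\ref{eq:line_inf1})--(\ref{eq:a_inf}) to the spherical system (\ref{eq:inner_inf})--(\ref{eq:scale}) edge by edge up to the nonzero scalars $1/\|\hat p_i\|$, with $\dot r_j$ surviving as the unconstrained last coordinate of $\dot q_j$ at the equator. Your additional sentence about transferring the spanning hypotheses is a harmless refinement that the paper leaves implicit.
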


\begin{figure}
\begin{minipage}{0.49\textwidth}
\centering
\begin{tikzpicture}[scale=1.3]
\tikzstyle{every node}=[circle, draw=black, very thick, fill=white, inner sep=0pt, minimum width=4pt];
\node [draw=white, fill=white,rectangle] (a) at (0,1.73) {$v_1$};
\node [draw=white, fill=white,rectangle] (a) at (1.4,0) {$v_2$};
\node [draw=white, fill=white,rectangle] (a) at (-1.3,0) {$v_3$};

\node [draw=white, fill=white,rectangle] (a) at (0.1,0.35) {$u_2$};
\node [draw=white, fill=white,rectangle] (a) at (-0.7,0.6) {$u_1$};
\node [draw=white, fill=white,rectangle] (a) at (-0.4,-0.2) {$u_4$};
\node [draw=white, fill=white,rectangle] (a) at (0,-0.95) {$u_3$};

\draw[very thick] (-2.3,1.5)--(2.3,1.5);
\draw[very thick] (0.2,-1.7)--(-2,1.7);
\draw[very thick] (-0.2,-1.7)--(2.2,1.7);
\path (-0.25,0.35) node (u2) {} ;
\path (-0.5,0.4) node (u1) {} ;
\path (-0.1,-0.2) node (u4) {} ;
\path (-0.1,-0.7) node (u3) {} ;

\draw[very thick](u2)--(u1);
\draw[very thick](u3)--(u4);

\draw[dashed](u1)--(-0.5,1.5);
\draw[dashed](u2)--(-0.25,1.5);
\draw[dashed](u1)--(0.2*3.4,-2.4*0.2)+(u1);
\draw[dashed](u2)--(0.29*3.4-0.25,-2.4*0.29+0.35);

\draw[dashed](u3)--(0.2,-0.93);
\draw[dashed](u4)--(0.5,-0.62);
\draw[dashed](u3)--(-0.3,-0.86);
\draw[dashed](u4)--(-0.5,-0.46);

\draw[thin] (-1.5,1) arc (315:346:1cm);

%\node [draw=white, fill=white,rectangle] (a) at (0,-2.5) {(b)};
\end{tikzpicture}
\par
(a)
\end{minipage}
\begin{minipage}{0.48\textwidth}
\centering	
\begin{tikzpicture}[scale=2.5]
\tikzstyle{whitenode} = [circle, draw=black, very thick, fill=black,
inner sep=0pt, minimum width=6pt]
\tikzstyle{thickedge} = [line width=2]
\tikzstyle{blacknode} = [circle, draw=black, very thick, fill=white,
inner sep=0pt, minimum width=6pt]
\tikzset{partial ellipse/.style args={#1:#2:#3}{insert path={+ (#1:#3)
arc (#1:#2:#3)} }}
\draw[thin] (0,0) circle [radius=1];
\draw[thick,dashed,rotate= 0.000000000000000 ] (1, 0) arc ( 0:360:1 and
0.125333233564304 );
\draw[dotted,rotate= 107.075953377807 ] (1, 0) arc ( 0:360:1 and
0.800894826150018 );
\draw[thickedge,rotate= 107.075953377807 ]  (0,0) [partial ellipse=
54.3415707330460 : 38.2336881822120 :1 and  0.800894826150018 ];
\draw[dotted,rotate= 167.795560976190 ] (1, 0) arc ( 0:360:1 and
0.364111601297783 );
\draw[thickedge,rotate= 167.795560976190 ]  (0,0) [partial ellipse=
-31.4928780356244 : 24.0572199764221 :1 and  0.364111601297783 ];
\draw[dotted,rotate= 334.166129934218 ] (1, 0) arc ( 0:360:1 and
0.0330314272341446 );
\draw[thickedge,rotate= 334.166129934218 ]  (0,0) [partial ellipse=
209.130388779898 : 282.490443307557 :1 and  0.0330314272341446 ];
\draw[dotted,rotate= 157.902530370385 ] (1, 0) arc ( 0:360:1 and
0.623910992999180 );
\draw[thickedge,rotate= 157.902530370385 ]  (0,0) [partial ellipse=
-28.3070718656813 : 29.0661904277877 :1 and  0.623910992999180 ];
\draw[dotted,rotate= 319.848993617484 ] (1, 0) arc ( 0:360:1 and
0.0227803291180145 );
\draw[thickedge,rotate= 319.848993617484 ]  (0,0) [partial ellipse=
158.242508482675 : 77.4357317860857 :1 and  0.0227803291180145 ];
\draw[dotted,rotate= 139.400842554892 ] (1, 0) arc ( 0:360:1 and
0.829924012947028 );
\draw[thickedge,rotate= 139.400842554892 ]  (0,0) [partial ellipse=
247.198324640423 : 280.575530054147 :1 and  0.829924012947028 ];
\draw[dotted,rotate= 209.460073108178 ] (1, 0) arc ( 0:360:1 and
0.197081750929911 );
\draw[thickedge,rotate= 209.460073108178 ]  (0,0) [partial ellipse=
148.360851935711 : 95.5450757632254 :1 and  0.197081750929911 ];
\draw[dotted,rotate= 14.3065645525702 ] (1, 0) arc ( 0:360:1 and
0.232188684270779 );
\draw[thickedge,rotate= 14.3065645525702 ]  (0,0) [partial ellipse=
31.9196311215272 : -35.3885493716870 :1 and  0.232188684270779 ];
\draw[dotted,rotate= 250.256428177342 ] (1, 0) arc ( 0:360:1 and
0.211955522827368 );
\draw[thickedge,rotate= 250.256428177342 ]  (0,0) [partial ellipse=
145.864636588848 : 86.8463574357333 :1 and  0.211955522827368 ];
\draw[dotted,rotate= 34.1146535609375 ] (1, 0) arc ( 0:360:1 and
0.581639756347941 );
\draw[thickedge,rotate= 34.1146535609375 ]  (0,0) [partial ellipse=
42.3896374710858 : -43.8263510112185 :1 and  0.581639756347941 ];
\draw[dotted,rotate= 180.000000000000 ] (1, 0) arc ( 0:360:1 and
0.125333233564304 );
\draw[thickedge,rotate= 180.000000000000 ]  (0,0) [partial ellipse=
337.500000000000 : 214.594757077012 :1 and  0.125333233564304 ];
\node [blacknode] at  (-0.793223236023649, 0.366172682076425)  {};
\node [below] at  (-0.793223236023649, 0.366172682076425)  {$u_{ 1 }$};
\node [blacknode] at  (-0.704451539337632, 0.605323469419715)  {};
\node [above right] at  (-0.704451539337632, 0.605323469419715)  {$u_{ 2 }$};
\node [blacknode] at  (0.792130952195805, 0.328702346435997)  {};
\node [below] at  (0.792130952195805, 0.328702346435997)  {$u_{ 3 }$};
\node [blacknode] at  (0.391557933079670, 0.738877215338514)  {};
\node [left] at  (0.391557933079670, 0.738877215338514)  {$u_{ 4 }$};
\node [whitenode] at  (-0.923879532511287, 0.0479629520098034)  {};
\node [left] at  (-0.963879532511287, 0.0479629520098034)  {$v_{ 1 }$};
\node [whitenode] at  (0.180609147449819, -0.123272122028795)  {};
\node [below] at  (0.180609147449819, -0.143272122028795)  {$v_{ 2 }$};
\node [whitenode] at  (0.823188326350385, 0.0711602520617515)  {};
\node [below] at  (0.823188326350385, 0.0711602520617515)  {$v_{ 3 }$};
\end{tikzpicture}
\par
(b)
\end{minipage}
\begin{minipage}{0.48\textwidth}
\centering
\begin{tikzpicture}[scale=2.5]
\tikzstyle{whitenode} = [circle, draw=black, very thick, fill=black,
inner sep=0pt, minimum width=6pt]
\tikzstyle{thickedge} = [line width=2]
\tikzstyle{blacknode} = [circle, draw=black, very thick, fill=white,
inner sep=0pt, minimum width=6pt]
\tikzset{partial ellipse/.style args={#1:#2:#3}{insert path={+ (#1:#3)
arc (#1:#2:#3)} }}
\draw[thin] (0,0) circle [radius=1];
\draw[thick,dashed,rotate= 0.000000000000000 ] (1, 0) arc ( 0:360:1 and
0.125333233564304 );
\draw[dotted,rotate= 122.716915265821 ] (1, 0) arc ( 0:360:1 and
0.641348139139878 );
\draw[thickedge,rotate= 122.716915265821 ]  (0,0) [partial ellipse=
60.2319089110344 : 44.1240263602004 :1 and  0.641348139139878 ];
\draw[dotted,rotate= 193.418566953353 ] (1, 0) arc ( 0:360:1 and
0.392955946464273 );
\draw[thickedge,rotate= 193.418566953353 ]  (0,0) [partial ellipse=
-46.4079312067082 : 9.14216680533833 :1 and  0.392955946464273 ];
\draw[dotted,rotate= 354.318753757873 ] (1, 0) arc ( 0:360:1 and
0.0649024428689042 );
\draw[thickedge,rotate= 354.318753757873 ]  (0,0) [partial ellipse=
138.132330107733 : 64.7722755800740 :1 and  0.0649024428689042 ];
\draw[dotted,rotate= 189.375903527387 ] (1, 0) arc ( 0:360:1 and
0.670058248262484 );
\draw[thickedge,rotate= 189.375903527387 ]  (0,0) [partial ellipse=
-46.0220082485242 : 11.3512540449447 :1 and  0.670058248262484 ];
\draw[dotted,rotate= 340.876253902859 ] (1, 0) arc ( 0:360:1 and
0.170485419065059 );
\draw[thickedge,rotate= 340.876253902859 ]  (0,0) [partial ellipse=
147.172852181895 : 66.3660754853060 :1 and  0.170485419065059 ];
\draw[dotted,rotate= 180.357101909203 ] (1, 0) arc ( 0:360:1 and
0.889192456471026 );
\draw[thickedge,rotate= 180.357101909203 ]  (0,0) [partial ellipse=
222.933602000983 : 256.310807414707 :1 and  0.889192456471026 ];
\draw[dotted,rotate= 226.194376835047 ] (1, 0) arc ( 0:360:1 and
0.314468353934086 );
\draw[thickedge,rotate= 226.194376835047 ]  (0,0) [partial ellipse=
160.834097578214 : 108.018321405729 :1 and  0.314468353934086 ];
\draw[dotted,rotate= 36.9180030975035 ] (1, 0) arc ( 0:360:1 and
0.159556077272406 );
\draw[thickedge,rotate= 36.9180030975035 ]  (0,0) [partial ellipse=
18.4026790569097 : -48.9055014363045 :1 and  0.159556077272406 ];
\draw[dotted,rotate= 268.694048444564 ] (1, 0) arc ( 0:360:1 and
0.429599570099408 );
\draw[thickedge,rotate= 268.694048444564 ]  (0,0) [partial ellipse=
150.509586651085 : 91.4913074979705 :1 and  0.429599570099408 ];
\draw[dotted,rotate= 60.5900377161802 ] (1, 0) arc ( 0:360:1 and
0.449181849907447 );
\draw[thickedge,rotate= 60.5900377161802 ]  (0,0) [partial ellipse=
29.8384343224067 : -56.3775541598976 :1 and  0.449181849907447 ];
\draw[dotted,rotate= 198.147782083880 ] (1, 0) arc ( 0:360:1 and
0.130141555015497 );
\draw[thickedge,rotate= 198.147782083880 ]  (0,0) [partial ellipse=
351.526219661714 : 228.620976738726 :1 and  0.130141555015497 ];
\node [blacknode] at  (-0.736742278325007, 0.116823634153509)  {};
\node [below] at  (-0.736742278325007, 0.106823634153509)  {$u_{ 1 }$};
\node [blacknode] at  (-0.763657763540379, 0.362614298101536)  {};
\node [above] at  (-0.763657763540379, 0.372614298101536)  {$u_{ 2 }$};
\node [blacknode] at  (0.728354433015321, 0.610225132131541)  {};
\node [above right] at  (0.728354433015321, 0.610225132131541)  {$u_{ 3 }$};
\node [blacknode] at  (0.231265764153486, 0.865392043431125)  {};
\node [left] at  (0.231265764153486, 0.865392043431125)  {$u_{ 4 }$};
\node [whitenode] at  (-0.945855860268814, -0.289845529431057)  {};
\node [left] at  (-0.945855860268814, -0.289845529431057)  {$v_{ 1 }$};
\node [whitenode] at  (0.429935653394980, 0.0162307619247677)  {};
\node [right] at  (0.449935653394980, 0.0162307619247677)  {$v_{ 2 }$};
\node [whitenode] at  (0.597739156692182, 0.298687146738420)  {};
\node [right] at  (0.617739156692182, 0.298687146738420)  {$v_{ 3 }$};
\end{tikzpicture}
\par
(c)
\end{minipage}
\begin{minipage}{0.48\textwidth}
\centering
\begin{tikzpicture}[scale=2.5]
\tikzstyle{whitenode} = [circle, draw=black, very thick, fill=black,
inner sep=0pt, minimum width=6pt]
\tikzstyle{thickedge} = [line width=2]
\tikzstyle{blacknode} = [circle, draw=black, very thick, fill=white,
inner sep=0pt, minimum width=6pt]
\tikzset{partial ellipse/.style args={#1:#2:#3}{insert path={+ (#1:#3)
arc (#1:#2:#3)} }}
\draw[thin] (0,0) circle [radius=1];
\draw[thick,dashed,rotate= 0.000000000000000 ] (1, 0) arc ( 0:360:1 and
0.125333233564304 );
\draw[dotted,rotate= 122.716915265821 ] (1, 0) arc ( 0:360:1 and
0.641348139139878 );
\draw[thickedge,rotate= 122.716915265821 ]  (0,0) [partial ellipse=
60.2319089110344 : 44.1240263602004 :1 and  0.641348139139878 ];
\draw[dotted,rotate= 13.4185669533530 ] (1, 0) arc ( 0:360:1 and
0.392955946464272 );
\draw[thickedge,rotate= 13.4185669533530 ]  (0,0) [partial ellipse=
133.592068793292 : 9.14216680533829 :1 and  0.392955946464272 ];
\draw[dotted,rotate= 354.318753757873 ] (1, 0) arc ( 0:360:1 and
0.0649024428689042 );
\draw[thickedge,rotate= 354.318753757873 ]  (0,0) [partial ellipse=
138.132330107733 : 64.7722755800740 :1 and  0.0649024428689042 ];
\draw[dotted,rotate= 9.37590352738726 ] (1, 0) arc ( 0:360:1 and
0.670058248262484 );
\draw[thickedge,rotate= 9.37590352738726 ]  (0,0) [partial ellipse=
133.977991751476 : 11.3512540449447 :1 and  0.670058248262484 ];
\draw[dotted,rotate= 340.876253902859 ] (1, 0) arc ( 0:360:1 and
0.170485419065059 );
\draw[thickedge,rotate= 340.876253902859 ]  (0,0) [partial ellipse=
147.172852181895 : 66.3660754853060 :1 and  0.170485419065059 ];
\draw[dotted,rotate= 180.357101909203 ] (1, 0) arc ( 0:360:1 and
0.889192456471026 );
\draw[thickedge,rotate= 180.357101909203 ]  (0,0) [partial ellipse=
222.933602000983 : 256.310807414707 :1 and  0.889192456471026 ];
\draw[dotted,rotate= 226.194376835047 ] (1, 0) arc ( 0:360:1 and
0.314468353934086 );
\draw[thickedge,rotate= 226.194376835047 ]  (0,0) [partial ellipse=
160.834097578214 : 108.018321405729 :1 and  0.314468353934086 ];
\draw[dotted,rotate= 36.9180030975035 ] (1, 0) arc ( 0:360:1 and
0.159556077272406 );
\draw[thickedge,rotate= 36.9180030975035 ]  (0,0) [partial ellipse=
18.4026790569097 : -48.9055014363045 :1 and  0.159556077272406 ];
\draw[dotted,rotate= 268.694048444564 ] (1, 0) arc ( 0:360:1 and
0.429599570099408 );
\draw[thickedge,rotate= 268.694048444564 ]  (0,0) [partial ellipse=
150.509586651085 : 91.4913074979705 :1 and  0.429599570099408 ];
\draw[dotted,rotate= 60.5900377161802 ] (1, 0) arc ( 0:360:1 and
0.449181849907447 );
\draw[thickedge,rotate= 60.5900377161802 ]  (0,0) [partial ellipse=
29.8384343224067 : -56.3775541598976 :1 and  0.449181849907447 ];
\draw[dotted,rotate= 18.1477820838796 ] (1, 0) arc ( 0:360:1 and
0.130141555015497 );
\draw[thickedge,rotate= 18.1477820838796 ]  (0,0) [partial ellipse=
-8.47378033828579 : 48.6209767387262 :1 and  0.130141555015497 ];
\node [blacknode] at  (-0.736742278325007, 0.116823634153509)  {};
\node [below] at  (-0.736742278325007, 0.106823634153509)  {$u_{ 1 }$};
\node [blacknode] at  (-0.763657763540379, 0.362614298101536)  {};
\node [left] at  (-0.763657763540379, 0.362614298101536)  {$u_{ 2 }$};
\node [blacknode] at  (0.728354433015321, 0.610225132131541)  {};
\node [above] at  (0.768354433015321, 0.640225132131541)  {$u_{ 3 }$};
\node [blacknode] at  (0.231265764153486, 0.865392043431125)  {};
\node [left] at  (0.211265764153486, 0.865392043431125)  {$u_{ 4 }$};
\node [whitenode] at  (0.945855860268814, 0.289845529431057)  {};
\node [right] at  (0.945855860268814, 0.289845529431057)  {$v_{ 1 }$};
\node [whitenode] at  (0.429935653394980, 0.0162307619247677)  {};
\node [below] at  (0.429935653394980, 0.0162307619247677)  {$v_{ 2 }$};
\node [whitenode] at  (0.597739156692182, 0.298687146738420)  {};
\node [below right] at  (0.597739156692182, 0.298687146738420)  {$v_{ 3 }$};
\end{tikzpicture}
\par
(d)
\end{minipage}
\begin{center}
\begin{tikzpicture}[scale=3/ 6.88625492817783 ]
\tikzstyle{whitenode} = [circle, draw=black, very thick, fill=black,
inner sep=0pt, minimum width=6pt]
\tikzstyle{thickedge} = [line width=2]
\tikzstyle{blacknode} = [circle, draw=black, very thick, fill=white,
inner sep=0pt, minimum width=6pt]
\draw[thick, dashed] (0, 5.88625492817783 ) -- (0,  -7.68625492817783 );
\draw[thickedge] (-6.88625492817783, -2.64145165647675) --
(-4.90673610096510, -0.605594055699498) ;
\draw[thickedge] (-6.88625492817783, -2.64145165647675) --
(0.000000000000000, 0.000000000000000) ;
\draw[thickedge] (-6.88625492817783, -2.64145165647675) --
(-1.33226762955019e-15, -6.56286985303870) ;
\draw[thickedge] (-4.90673610096510, -0.605594055699498) --
(0.000000000000000, 0.000000000000000) ;
\draw[thickedge] (-4.90673610096510, -0.605594055699498) --
(-1.33226762955019e-15, -6.56286985303870) ;
\draw[thickedge] (-1.96113708810441, 0.0640828764676245) --
(-2.83788015192171, 0.0927316730220544) ;
\draw[thickedge] (-1.96113708810441, 0.0640828764676245) --
(-1.33226762955019e-15, -6.56286985303870) ;
\draw[thickedge] (-1.96113708810441, 0.0640828764676245) --
(1.11022302462516e-15, 4.17654222558803) ;
\draw[thickedge] (-2.83788015192171, 0.0927316730220544) --
(-1.33226762955019e-15, -6.56286985303870) ;
\draw[thickedge] (-2.83788015192171, 0.0927316730220544) --
(1.11022302462516e-15, 4.17654222558803) ;
\draw[thickedge] (0.000000000000000, 0.000000000000000) --
(1.11022302462516e-15, 4.17654222558803) ;
\node [blacknode] at  (-6.88625492817783, -2.64145165647675)  {};
\node [below left] at  (-6.88625492817783, -2.64145165647675)  {$u_{ 1 }$};
\node [blacknode] at  (-4.90673610096510, -0.605594055699498)  {};
\node [above left] at  (-4.90673610096510, -0.605594055699498)  {$u_{ 2 }$};
\node [blacknode] at  (-1.96113708810441, 0.0640828764676245)  {};
\node [right] at  (-1.96113708810441, 0.1640828764676245)  {$u_{ 3 }$};
\node [blacknode] at  (-2.83788015192171, 0.0927316730220544)  {};
\node [left] at  (-2.83788015192171, 0.0927316730220544)  {$u_{ 4 }$};
\node [whitenode] at  (0.000000000000000, 0.000000000000000)  {};
\node [right] at  (0.000000000000000, 0.000000000000000)  {$v_{ 1 }$};
\node [whitenode] at  (-1.33226762955019e-15, -6.56286985303870)  {};
\node [right] at  (-1.33226762955019e-15, -6.56286985303870)  {$v_{ 2
}$};
\node [whitenode] at  (1.11022302462516e-15, 4.17654222558803)  {};
\node [right] at  (1.11022302462516e-15, 4.17654222558803)  {$v_{ 3 }$};
\end{tikzpicture}
\par
(e)
\end{center}
\caption{An illustration of the rigidity preserving transformations in Theorems \ref{thm:transfer} and \ref{thm:transfer2}. (a) A point-line framework $(G,\hat{p},\ell)$. (b) The corresponding spherical framework $(G,\phi\circ (\hat{p},\ell))$ in $\mathbb{S}_{\geq 0}^2$ with three points on the equator. The spherical framework in (c) arises from (b) by a small rotation to take points off the equator. An inversion of points in $\mathbb{S}^{d}_{<0}$ then gives (d). Finally in (e) we have a projection to the plane as a bar-joint framework with three collinear points.
}
\label{fig:transfer_sph_ptline}
 \end{figure}

The transformation used in Theorem \ref{thm:transfer} is illustrated in Figure \ref{fig:transfer_sph_ptline}(a), (b).

%
%\begin{figure}
%    \begin{center}
% (a) {\includegraphics[scale=.25]{Spherical1.pdf}}
% (b) {\includegraphics[scale=.24]{SphericalAntipode2.pdf}}
%(c)  {\includegraphics[scale=.24]{SphericalTurn3.pdf}}
% (d)   {\includegraphics[scale=.27]{SphericalProj4d.pdf}}
%\end{center}
%   \caption{ Rigidity preserving transformations of a spherical framework in Figure~\ref{fig:transfer_sph_ptline}. The framework in (b) arises from that in (a) by inversion of points in $\mathbb{S}^{d}_{<0}$, and a rotation of the sphere to give a clearer perspective.  A rotation of (a) then gives (c) and (d) is the projection of (c) to the plane.  Figure 3(c) shows the graph of all of these frameworks.}
%    \label{fig:Spherical}
%    \end{figure}

In order to relate $(G, \phi\circ (\hat{p},\ell))$ with a bar-joint
framework in $\mathbb{A}^d$, we further consider transformations
for frameworks in $\mathbb{S}^d$ introduced in \cite{BSWWSphere}.
%{rotation} and {(pointwise) inversion},
Given a framework $(G,q)$ in $\mathbb{S}^d$, a {\em
rotation} $\gamma$ is an operator acting on $q$ such that
$(\gamma\circ q)_i=R{q}_i$, for all $i\in V$, for some orthogonal matrix $R$.
Note that $\dot{{q}}$ is an infinitesimal motion of $(G,q)$ if and
only if the map $\gamma \circ \dot{q}$ defined by $(\gamma \circ
\dot{{q}})_i=R\dot{{q}}_i\ (i\in V)$ is an infinitesimal motion of
$(G,\gamma\circ q)$.  In particular, $(G,q)$ is infinitesimally
rigid if and only if  $(G,\gamma\circ q)$ is infinitesimally rigid.

Given a framework $(G,{q})$ in $\mathbb{S}^d$ and $I\subseteq V$, the {\em inversion} $\iota$ (with respect to
$I$) is an operator acting on $q$ such that $(\iota\circ q)_i=-q_i$
if $i\in I$ and $(\iota\circ q)_i=q_i$ otherwise. Note that
$\dot{q}$ is an infinitesimal motion of $(G,q)$ if and only if
$\iota \circ \dot{q}$ defined by $(\iota \circ
\dot{q})_i=-\dot{q}_i\ (i\in I)$ and $(\iota \circ
\dot{q})_i=\dot{q}_i\ (i\in V\setminus I)$ is an infinitesimal
motion of $(G,\iota\circ q)$, which again means that $\iota$
 preserves infinitesimal rigidity.

We shall use an inversion to flip points in $\mathbb{S}_{<0}$ to
$\mathbb{S}_{>0}$ so that a framework $(G,q)$ in $\mathbb{S}^d$ is
transferred to a framework $(G,\iota\circ q)$ in $\mathbb{S}^d_{\geq
0}$. In the following discussion, $\iota$ always refers to such an
operator. Then a framework $(G,q)$ in $\mathbb{S}^d$ can be
transformed to a framework $(G, \iota\circ \gamma\circ q)$ in
$\mathbb{S}_{>0}$ by first applying a rotation $\gamma$ which
moves all points off the equator, and then applying $\iota$ to flip points to $\mathbb{S}_{>0}$.
For a framework in
$\mathbb{S}_{>0}$ we can then use the inverse of $\phi$ to transfer
it to $\mathbb{A}^d$.
%Interesting properties of the resulting
%framework $(G, \phi^{-1} \circ \iota\circ \gamma\circ p)$ is that
An important property of this sequence of transformations is that
point-hyperplane incidence is preserved, i.e. points in $(G,q)$ lie
on a hyperplane in $\mathbb{S}^d$ if and only if the corresponding
points in $(G, \phi^{-1} \circ \iota\circ \gamma\circ p)$ lie on a
hyperplane in $\mathbb{A}^d$. Combining this with
Theorem~\ref{thm:transfer} we have our main result. (See also Figure \ref{fig:transfer_sph_ptline} for an illustration.)

\begin{theorem}\label{thm:transfer2}
Let $(G,\hat{p},\ell)$ be a point-hyperplane framework in $\mathbb{A}^d$ with $G=(V_P\cup
V_L, E)$, $\hat p:V_P\rightarrow \mathbb{A}^d$ and $\ell=(a,r):
V_L\rightarrow \mathbb{S}^{d-1}\times \mathbb{R}$. Let
$(G,\hat{q})$ be the bar-joint framework in $\mathbb{A}^d$
%(=\mathbb{R}^d)$
with $\hat{q}= \phi^{-1}\circ \iota\circ \gamma \circ \phi\circ (\hat{p},\ell)$.
Then the points in $\hat{q}(V_L)$ all lie on a hyperplane in
$\mathbb{A}^d$, and $(G,\hat{p},\ell)$ is infinitesimally rigid  if and only if $(G,\hat{q})$ is infinitesimally rigid.
\end{theorem}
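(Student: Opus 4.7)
The plan is to concatenate the rigidity-preserving transformations set up earlier in this section. First, Theorem~\ref{thm:transfer} translates the problem from the point-hyperplane framework $(G,\hat p,\ell)$ in $\mathbb{A}^d$ to the spherical bar-joint framework $(G,\phi\circ(\hat p,\ell))$ in $\mathbb{S}^d_{\geq 0}$, in which the images of $V_L$ all sit on the equator $Q$. Next I would invoke a rotation $\gamma$ that moves every vertex off the equator; such a $\gamma$ exists because, for each $v$ currently on $Q$, the set of orthogonal matrices $R$ with $Rv\in Q$ is a proper algebraic subvariety of $O(d+1)$, so a generic small rotation suffices. After $\gamma$, the inversion $\iota$ flips the points lying in $\mathbb{S}^d_{<0}$ into $\mathbb{S}^d_{>0}$, and finally $\phi^{-1}$ transports the resulting framework to the bar-joint framework $(G,\hat q)$ in $\mathbb{A}^d$.

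The infinitesimal-rigidity equivalence is then formal: each of the four operations in the composition $\phi^{-1}\circ\iota\circ\gamma\circ\phi$ carries the space of infinitesimal motions bijectively. This is Theorem~\ref{thm:transfer} for $\phi$, the rotation and inversion remarks immediately preceding the statement for $\gamma$ and $\iota$, and Proposition~\ref{prop:sdrd} (applied in reverse) for $\phi^{-1}$. Composing these four isomorphisms yields the desired equivalence of infinitesimal rigidity.

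For the geometric claim that $\hat q(V_L)$ lies in a common hyperplane of $\mathbb{A}^d$, I would track the linear hyperplane ${\bf e}^\perp\subset\mathbb{R}^{d+1}$. Since $Q\subset{\bf e}^\perp$, the images of $V_L$ under $\phi$ lie in ${\bf e}^\perp$; the rotation sends them into $R({\bf e}^\perp)$; the inversion $\iota$ acts by coordinatewise sign changes and so preserves the linear subspace $R({\bf e}^\perp)$; and $\phi^{-1}$ sends each point of $\mathbb{S}^d_{>0}$ to a positive scalar multiple of itself in $\mathbb{A}^d$, so the images remain in $R({\bf e}^\perp)$. Therefore $\hat q(V_L)\subseteq R({\bf e}^\perp)\cap\mathbb{A}^d$. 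Because $\gamma$ was chosen so that no vertex of $V_L$ remains on the equator, $R$ cannot preserve ${\bf e}^\perp$, so $R{\bf e}\ne\pm{\bf e}$; hence $R({\bf e}^\perp)$ meets $\mathbb{A}^d$ transversely in a $(d-1)$-dimensional affine subspace, which is a hyperplane of $\mathbb{A}^d$. The only real obstacle is this last transversality argument (together with the genericity claim for $\gamma$); everything else reduces to composing previously-established bijections between motion spaces.
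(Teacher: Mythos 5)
Your proposal is correct and follows essentially the same route as the paper, which obtains the theorem by composing Theorem~\ref{thm:transfer} with the rotation, inversion and central-projection transfers and by observing that point--hyperplane incidence is preserved. You merely supply details the paper leaves implicit (the genericity argument for the existence of $\gamma$ and the verification that $R(\mathbf{e}^{\perp})\cap\mathbb{A}^d$ is a hyperplane), and these are sound.
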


Note that the above transformation is reversible, i.e., from a
framework $(G,\hat{q})$ in $\mathbb{A}^d$ with points $\hat{q}(X)$ being on a
hyperplane for $X\subset V$, one obtains a point-hyperplane framework $(G,
\hat{p}, \ell)$ in $\mathbb{A}^d$ with $V_L=X$ and $V_P=V\setminus X$ such
that $(G,\hat{q})$ is infinitesimally rigid if and only if $(G, \hat{p}, \ell)$
is infinitesimally rigid. We can now associate $\mathbb{A}^d$ with
$\mathbb{R}^d$ to obtain the following result.
\begin{corollary}\label{cor:regulartransfer}
Let $G=(V,E)$ be a graph and $X\subseteq V$. Then the following are equivalent:
\begin{itemize}
\item[(a)]  $G$ can be realised as an infinitesimally rigid bar-joint framework in $\mathbb{R}^d$ such that the points assigned to $X$ lie on a hyperplane.
\item[(b)] $G$ can be realised as an infinitesimally rigid point-hyperplane framework in $\mathbb{R}^d$ such that each vertex in $X$ is realised as a hyperplane and each vertex in $V\setminus X$ is realised as a point.
\end{itemize}
\end{corollary}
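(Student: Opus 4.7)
The plan is to deduce Corollary~\ref{cor:regulartransfer} directly from Theorem~\ref{thm:transfer2}, via the standard identification $\mathbb{R}^d \cong \mathbb{A}^d$ given by $x \mapsto (x,1)$, under which infinitesimal rigidity is preserved (Section~\ref{euclid/sphere}) and hyperplanes of $\mathbb{R}^d$ correspond to hyperplanes of $\mathbb{A}^d$. It therefore suffices to prove the equivalence with $\mathbb{R}^d$ replaced throughout by $\mathbb{A}^d$.

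The implication (b)$\Rightarrow$(a) is immediate: given an infinitesimally rigid point-hyperplane framework $(G,\hat p,\ell)$ in $\mathbb{A}^d$ with $V_L=X$ and $V_P=V\setminus X$, Theorem~\ref{thm:transfer2} supplies an infinitesimally rigid bar-joint framework $(G,\hat q)$ in $\mathbb{A}^d$ whose $X$-vertices all lie on a hyperplane.

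For the converse I would reverse the chain $\phi^{-1}\circ \iota\circ \gamma \circ \phi$ step by step. Starting from an infinitesimally rigid bar-joint framework $(G,\hat q)$ in $\mathbb{A}^d$ with $\hat q(X)$ lying on a hyperplane $H$, I would first apply the central projection $\phi$ to land in $\mathbb{S}^d_{>0}$; the image $\phi(H)$ lies on the $(d-1)$-dimensional great sphere of $\mathbb{S}^d$ cut out by the $d$-dimensional linear span of $H$ in $\mathbb{R}^{d+1}$. Since any two such great spheres are related by an orthogonal transformation, there is a rotation $\gamma^{-1}$ carrying this great sphere onto the equator $Q$; composing with an inversion $\iota^{-1}$ on those indices whose images land in $\mathbb{S}^d_{<0}$ produces a framework $(G,q')$ on $\mathbb{S}^d_{\geq 0}$ with $q'(X)\subset Q$ and $q'(V\setminus X)\subset \mathbb{S}^d_{>0}$. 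Both rotations and inversions preserve infinitesimal rigidity, as recorded in the paragraphs preceding Theorem~\ref{thm:transfer2}. Finally, I would invert the recipe of Theorem~\ref{thm:transfer}: interpret each $q'_i=(a_i,0)\in Q$ $(i\in X)$ as a hyperplane with normal $a_i$ (choosing any $r_i\in\mathbb{R}$, which does not enter the infinitesimal system) and set $\hat p_i = \phi^{-1}(q'_i)$ for $i\in V\setminus X$. The resulting point-hyperplane framework is infinitesimally rigid by Theorem~\ref{thm:transfer} applied in reverse.

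The main obstacle is the existence of the rotation $\gamma^{-1}$ mapping $\phi(H)$ onto the equator, which reduces to the transitive action of $O(d+1)$ on the $(d-1)$-dimensional great spheres of $\mathbb{S}^d$. Once this is settled, every step in the chain has already been exhibited as a rigidity-preserving bijection on the relevant infinitesimal motion spaces, and the corollary follows.
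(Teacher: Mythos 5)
Your proposal is correct and follows essentially the same route as the paper, which disposes of the corollary in one line by remarking that the chain of transformations in Theorem~\ref{thm:transfer2} is reversible and then identifying $\mathbb{A}^d$ with $\mathbb{R}^d$; you simply make the reversal explicit, and the one point you flag (transitivity of $O(d+1)$ on great $(d-1)$-spheres, i.e.\ on $d$-dimensional linear subspaces of $\mathbb{R}^{d+1}$) is standard and is exactly what the paper's rotation $\gamma$ relies on. The only cosmetic caveat, which the paper also leaves implicit, is that a point $\hat q_i$ with $i\notin X$ accidentally lying on the hyperplane $H$ would be sent to the equator; this is harmless since infinitesimal rigidity is an open condition and such a point can be perturbed off $H$.
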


%%%%%%%%%%%%%%%%%%%%%%%%%%%%%%%%%
\subsection{Combinatorial characterization in the plane}\label{subsec:jj}
To see the power of our main theorem, let us consider the case when $d=2$.
In the plane, Jackson and Owen \cite{JO} were able to give a combinatorial characterization of graphs which can be realised as an infinitesimally rigid point-line framework.
Combining this with Corollary~\ref{cor:regulartransfer} we immediately obtain
 the following characterization of graphs which can be realised as infinitesimally rigid bar-joint frameworks in the plane with a given set of collinear points.
This theorem extends a result by Jackson and Jord\'an \cite{JJcol}, where they give a characterization for the case when three specified points are collinear.
We will need the following notation. Given a graph $G=(V,E)$, $X\subseteq V$ and $A\subseteq E$, let $\nu_X(A)$ be the number of vertices of $X$ which are incident to edges in $A$.

\begin{theorem}\label{thm:barjointcol}
Let $G=(V,E)$ be a graph and $X\subseteq V$. Then the following are equivalent:
\begin{itemize}
\item[(a)] $G$ can be realised as an infinitesimally rigid bar-joint framework in $\mathbb{R}^2$ such that the points assigned to $X$ lie on a line.

\item[(b)] $G$ can be realised as an infinitesimally rigid point-line framework in $\mathbb{R}^2$ such that each vertex in $X$ is realised as a line and each vertex in $V\setminus X$ is realised as a point.

\item[(c)] $G$ contains a spanning subgraph $G'=(V,E')$ such that $E'=2|V|-3$ and,
%$|A|\leq \rho(A)+\nu_{X}(A)-1$
for all $\emptyset\neq A\subseteq E'$ and all partitions $\{A_1,\ldots,A_s\}$ of $A$,
$$|A|\leq \sum_{i=1}^s (2\nu_{V\sm X}(A_i)+\nu_{X}(A_i)-2)+\nu_{X}(A)-1
%\, :\, \text{$\{A_1,\ldots,A_s\}$ is a partition of $A$}\right\}\,
.$$
\end{itemize}
\end{theorem}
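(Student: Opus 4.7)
The proof combines two results already in hand. First, (a) $\Leftrightarrow$ (b) is exactly the content of Corollary~\ref{cor:regulartransfer} specialised to $d = 2$: under the rigidity-preserving transformation developed in Section~2, a bar-joint framework in $\mathbb{R}^2$ in which the joints assigned to $X$ lie on a common line corresponds precisely to a point-line framework in which the vertices of $X$ are realised as lines and those of $V \setminus X$ as points, and this correspondence preserves infinitesimal rigidity in both directions.

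For (b) $\Leftrightarrow$ (c), the plan is to invoke the combinatorial characterisation of generically infinitesimally rigid point-line frameworks in the plane due to Jackson and Owen~\cite{JO}. Given $G=(V,E)$ together with $X \subseteq V$, set $V_P := V \setminus X$ and $V_L := X$; this automatically splits $E$ into $E_{PP}$, $E_{PL}$, $E_{LL}$ according to the types of the endpoints of each edge, turning $G$ into a point-line graph in the sense of \cite{JO}. Their theorem then yields a necessary and sufficient combinatorial condition for $G$ to admit an infinitesimally rigid generic point-line realisation, and this condition takes precisely the form stated in (c), namely the existence of a spanning subgraph $G'=(V,E')$ with $|E'|=2|V|-3$ satisfying the partition-based sparsity count.

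The only substantive step is therefore to reconcile the count in (c) with the independence condition from \cite{JO}, and this is the main obstacle. As a sanity check, note that the trivial partition $s=1$ reduces (c) to the standard Laman-type bound $|A| \leq 2\nu(A)-3$, while for finer partitions each part contributes only $2\nu_{V\sm X}(A_i)+\nu_X(A_i)-2$: this isolates, within each part, the bar-joint count in which each line is allowed only one of its two degrees of freedom, with the residual $\nu_X(A)-1$ collecting the translational freedoms of the lines along their own directions globally across the partition. This matches the geometric picture underlying the Jackson-Owen rigidity matroid, and once the notational translation is performed no further argument is required, so the theorem follows.
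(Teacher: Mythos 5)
Your proposal is correct and follows exactly the route the paper takes: (a)$\Leftrightarrow$(b) is Corollary~\ref{cor:regulartransfer} with $d=2$, and (b)$\Leftrightarrow$(c) is the Jackson--Owen characterisation of infinitesimally rigid point-line frameworks from \cite{JO}, whose count is precisely the condition in (c). The paper treats this combination as immediate, just as you do.
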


We illustrate this result using the underlying graph $G=(V,E)$ in Figure~\ref{fig:transfer_sph_ptline}, taking $X=\{v_1,v_2,v_3\}$. We have
$|E|=11=2|V|-3$ so the only possible choice for the subgraph $G'$
described in
Theorem~\ref{thm:barjointcol}(c) is $G'=G$. However, if we take $A=E$ and
$A_1$, $A_2$ and  $A_3$ to be the edge-sets induced by
$\{v_1,u_1,u_2,v_2\}$, $\{v_2,u_3,u_4,v_3\}$ and $\{v_1,v_3\}$,
respectively, then $\{A_1,A_2,A_3\}$ partitions $E$, $\nu_{V\sm X}(A_1)=\nu_{V\sm X}(A_2)=2$,
$\nu_{V\sm X}(A_3)=0$ and $\nu_{X}(A_1)=\nu_{X}(A_2)=\nu_{X}(A_3)=2$.
%and hence $\rho(E)\leq 8$.
Since $\nu_X(E)=3$, this gives
$$|E|=11>\sum_{i=1}^3 (2\nu_{V\sm X}(A_i)+\nu_{X}(A_i)-2)+\nu_{X}(E)-1=10$$
so the condition in Theorem~\ref{thm:barjointcol}(c)
 fails to hold. Hence $G$ cannot be realised as an
infinitesimally rigid point-line framework in $\mathbb{R}^2$ such
that each vertex in $X$ is realised as a line and each vertex in
$V\setminus X$ is realised as a point, and $G$ cannot be realised as
an infinitesimally rigid bar-joint framework in $\mathbb{R}^2$ such
that the points assigned to $X$ lie on a line. Note however that
every generic realisation of $G$ as a bar-joint framework in
$\mathbb{R}^2$ is infinitesimally rigid by Laman's theorem \cite{Lamanbib}.

As a second example, consider the point-line framework in Figure~\ref{fig:DCa}(b). The underlying point-line graph is given in Figure~\ref{fig:walter}(a), and is shown as a bar-joint framework with collinear points. It has
 $|V_P|=8$ and $|V_L|=5$, and hence we have $2|V_P|+2|V_L|-3=23$.
If we take $X$ to be the set of line vertices $V_L$ and $A_i$ to be the set of edges incident to the body $B_i$ for $i=1,2,3,4$, then
%$A_1$, $A_2$ and  $A_3$ to be the edge-sets induced by
%$\{v_1,u_1,u_2,v_2\}$, $\{v_2,u_3,u_4,v_3\}$ and $\{v_1,v_3\}$,
%respectively, then $\{A_1,A_2,A_3\}$ partitions $E$, $\nu_{V\sm X}(A_1)=\nu_{V\sm X}(A_2)=2$,
%$\nu_{V\sm X}(A_3)=0$ and $\nu_{X}(A_1)=\nu_{X}(A_2)=\nu_{X}(A_3)=2$.
%and hence $\rho(E)\leq 8$.
%Since $\nu_X(E)=3$, this gives
$$\sum_{i=1}^4 (2\nu_{V\sm X}(A_i)+\nu_{X}(A_i)-2)+\nu_{X}(E)-1=22.$$
Since $A_1,A_2,A_3,A_4$ partition $E$, no subset $A$ of $E$ with $|A|=23$ can satisfy Theorem~\ref{thm:barjointcol}(c).
 %Since $|E|=24$, the framework is over-braced by $5$. However, using the sparsity counts in Theorem~\ref{thm:fixednormal}, it is easy to see that there are $6$ redundant edges, so that the framework has actually one degree of freedom. (For example, the subset $F$ of five edges consisting of the edge corresponding to bar $4$ and the four edges corresponding to the point-line distance constraints between bar $4$ and the lines $\ell_1$ and $\ell_2$ satisfies $|F|=5>4=2\nu_P(F)+\nu_L(F)-2$.)
%Note that the subgraph associated to the cycle 1-2-3 is dependent and removing any edge yields an independent subgraph. It is easy to see (for example by 2-addition moves) that the graph in Figure \ref{fig:DCc} has 22 independent edges. So the system is expected to have one degree of freedom.

\begin{figure}[htp]
\begin{center}
\begin{tikzpicture}[scale=0.8]
\tikzstyle{every node}=[circle, draw=black, very thick, fill=white, inner sep=0pt, minimum width=4pt];

\draw[thick, dashed](0,0)--(6.5,2.2);

 \filldraw[draw=black] (1,0.35) circle (3pt);
 \filldraw[draw=black] (2.5,0.83) circle (3pt);
 \filldraw[draw=black] (3.6,1.2) circle (3pt);
 \filldraw[draw=black] (4.5,1.53) circle (3pt);
 \filldraw[draw=black] (5.8,1.95) circle (3pt);

\node [draw=white, fill=white,rectangle] (a) at (0.6,0.5) {$\ell_1$};
\node [draw=white, fill=white,rectangle] (a) at (2.3,1.2) {$\ell_2$};
\node [draw=white, fill=white,rectangle] (a) at (3.3,0.8) {$\ell_5$};
\node [draw=white, fill=white,rectangle] (a) at (4.5,1.1) {$\ell_4$};
\node [draw=white, fill=white,rectangle] (a) at (6.2,2.3) {$\ell_3$};

\node [draw=white, fill=white,rectangle] (a) at (1.5,2.7) {$B_3$};
\node [draw=white, fill=white,rectangle] (a) at (4.2,3.5) {$B_2$};
\node [draw=white, fill=white,rectangle] (a) at (1.6,-0.9) {$B_4$};
\node [draw=white, fill=white,rectangle] (a) at (4.5,-1) {$B_1$};

\path (1.1,2) node (b3l) {} ;
\path (2,2.5) node (b3r) {} ;
\draw[very thick](b3l)--(b3r);

\path (4.7,3.5) node (b2l) {} ;
\path (4.9,2.6) node (b2r) {} ;
\draw[very thick](b2l)--(b2r);

\path (1.9,-0.2) node (b4l) {} ;
\path (2.2,-0.9) node (b4r) {} ;
\draw[very thick](b4l)--(b4r);

\path (3.9,-0.7) node (b1l) {} ;
\path (4.9,-0.4) node (b1r) {} ;
\draw[very thick](b1l)--(b1r);

\draw[thick](b3l)--(1,0.35);
\draw[thick](b3l)--(3.6,1.2);
\draw[thick](b3l)--(4.5,1.53);
\draw[thick](b3r)--(1,0.35);
\draw[thick](b3r)--(3.6,1.2);
\draw[thick](b3r)--(4.5,1.53);

\draw[thick](b4l)--(2.5,0.83);
\draw[thick](b4l)--(1,0.35);
\draw[thick](b4r)--(2.5,0.83);
\draw[thick](b4r)--(1,0.35);

\draw[thick](b2l)--(4.5,1.53);
\draw[thick](b2l)--(5.8,1.95);
\draw[thick](b2r)--(4.5,1.53);
\draw[thick](b2r)--(5.8,1.95);

\draw[thick](b1l)--(2.5,0.83);
\draw[thick](b1l)--(3.6,1.2);
\draw[thick](b1l)--(5.8,1.95);
\draw[thick](b1r)--(2.5,0.83);
\draw[thick](b1r)--(3.6,1.2);
\draw[thick](b1r)--(5.8,1.95);

\node [draw=white, fill=white,rectangle] (a) at (3.5,-2.8) {(a)};

\end{tikzpicture}
\hspace{1.2cm}
\begin{tikzpicture}[scale=0.8]
\tikzstyle{every node}=[circle, draw=black, very thick, fill=white, inner sep=0pt, minimum width=4pt];

 \filldraw[draw=black] (1,0.85) circle (3pt);
 \filldraw[draw=black] (3.6,1.7) circle (3pt);
 \filldraw[draw=black] (4.5,2.03) circle (3pt);

\node [draw=white, fill=white,rectangle] (a) at (0.6,1) {$\ell_1$};
\node [draw=white, fill=white,rectangle] (a) at (3.3,1.3) {$\ell_5$};
\node [draw=white, fill=white,rectangle] (a) at (4.5,1.6) {$\ell_4$};

\node [draw=white, fill=white,rectangle] (a) at (1.5,3.2) {$B_3$};

\path (1.1,2.5) node (b3l) {} ;
\path (2,3) node (b3r) {} ;
\draw[very thick](b3l)--(b3r);

\draw[thick](b3l)--(1,0.85);
\draw[thick](b3l)--(3.6,1.7);
\draw[thick](b3l)--(4.5,2.03);
\draw[thick](b3r)--(1,0.85);
\draw[thick](b3r)--(3.6,1.7);
\draw[thick](b3r)--(4.5,2.03);

%%B2
\path (6.7,3.5) node (b2l) {} ;
\path (6.9,2.6) node (b2r) {} ;
\draw[very thick](b2l)--(b2r);

 \filldraw[draw=black] (6.5,1.53) circle (3pt);
 \filldraw[draw=black] (7.8,1.95) circle (3pt);

\node [draw=white, fill=white,rectangle] (a) at (6.5,1.1) {$\ell_4$};
\node [draw=white, fill=white,rectangle] (a) at (8.2,2.3) {$\ell_3$};

\draw[thick](b2l)--(6.5,1.53);
\draw[thick](b2l)--(7.8,1.95);
\draw[thick](b2r)--(6.5,1.53);
\draw[thick](b2r)--(7.8,1.95);

\node [draw=white, fill=white,rectangle] (a) at (6.2,3.5) {$B_2$};
%B4

\filldraw[draw=black] (1,-0.65) circle (3pt);
 \filldraw[draw=black] (2.5,-0.17) circle (3pt);

\node [draw=white, fill=white,rectangle] (a) at (0.6,-0.5) {$\ell_1$};
\node [draw=white, fill=white,rectangle] (a) at (2.3,0.2) {$\ell_2$};

\node [draw=white, fill=white,rectangle] (a) at (1.6,-1.9) {$B_4$};

\path (1.9,-1.2) node (b4l) {} ;
\path (2.2,-1.9) node (b4r) {} ;
\draw[very thick](b4l)--(b4r);

\draw[thick](b4l)--(2.5,-0.17);
\draw[thick](b4l)--(1,-0.65);
\draw[thick](b4r)--(2.5,-0.17);
\draw[thick](b4r)--(1,-0.65);

%%B1

 \filldraw[draw=black] (4.5,-0.67) circle (3pt);
 \filldraw[draw=black] (5.6,-0.3) circle (3pt);
 \filldraw[draw=black] (7.8,0.45) circle (3pt);

\node [draw=white, fill=white,rectangle] (a) at (4.3,-0.3) {$\ell_2$};
\node [draw=white, fill=white,rectangle] (a) at (6,0) {$\ell_5$};
\node [draw=white, fill=white,rectangle] (a) at (8.2,0.8) {$\ell_3$};

\node [draw=white, fill=white,rectangle] (a) at (6.5,-2.5) {$B_1$};

\path (5.9,-2.3) node (b1l) {} ;
\path (6.9,-1.9) node (b1r) {} ;
\draw[very thick](b1l)--(b1r);

\draw[thick](b1l)--(4.5,-0.67);
\draw[thick](b1l)--(5.6,-0.3);
\draw[thick](b1l)--(7.8,0.45);
\draw[thick](b1r)--(4.5,-0.67);
\draw[thick](b1r)--(5.6,-0.3);
\draw[thick](b1r)--(7.8,0.45);

\node [draw=white, fill=white,rectangle] (a) at (4,-2.8) {(b)};
\end{tikzpicture}
\caption{A bar-joint framework with five collinear points corresponding to the point-line framework in Figure~\ref{fig:DCa}(b) and a partition of the edge set into $A_1, A_2, A_3$ and $A_4$.}
\label{fig:walter}
\end{center}
\end{figure}

\subsection{Rigidity matrices}\label{subsec:rigmat}
It is standard to analyze a linear system by using its matrix representation.
Here we shall present the matrices corresponding to the key linear systems discussed above.

Let $(G,p)$ be a bar-joint framework in $\mathbb{R}^d$ with underlying graph $G=(V,E)$.
 The $|E|\times d|V|$ matrix corresponding to the linear system given in (\ref{eq:length_inf}) is
the well known \emph{rigidity matrix} of $(G,p)$, and has the form
\begin{displaymath} R(G,p)=\bordermatrix{& & & & i & & & & j & & & \cr & & & &  & & \vdots & &  & & &
\cr \{i,j\} & 0 & \ldots &  0 & (p_{i}-p_{j}) & 0 & \ldots & 0 & (p_{j}-p_{i}) &  0 &  \ldots&  0 \cr & & & &  & & \vdots & &  & & &
}
\textrm{,}\end{displaymath}
where, for each edge $\{i,j\}\in E$, $R(G,p)$ has the row with
$(p_{i}-p_{j})_{1},\ldots,(p_{i}-p_{j})_{d}$ in the columns associated with $i$,
$(p_{j}-p_{i})_{1},\ldots,(p_{j}-p_{i})_{d}$ in the columns associated with $j$,
and $0$ elsewhere \cite{WhDiscrete}. It is clear from the discussion in Section~\ref{euclid/sphere}
that the rigidity matrix $R(G,p)$ is the fundamental tool to analyse the
infinitesimal rigidity properties of $(G,p)$.

Similarly, for a spherical  framework $(G,p)$ with $p:V\to\mathbb{S}^d$, we can also write down
 a standard spherical rigidity matrix, as described in \cite{NOP,BSWWSphere}. For our purposes it is more instructive to consider the  $(|E|+|V|)\times(d+1)|V|$ matrix $R_{\mathbb{S}}(G,p)$ corresponding to the linear system given in (\ref{eq:inner_inf}) and (\ref{eq:scale}), which can easily be seen to be row-equivalent to the matrix in \cite{NOP,BSWWSphere}:
\begin{displaymath} R_{\mathbb{S}}(G,p)=\bordermatrix{& & & & i& & & & j & & & \cr
 \{i,j\} & 0 & \ldots &  0 & p_{j} & 0 & \ldots & 0 &p_{i} &  0 &  \ldots&  0 \cr
&&  & & \vdots&&&&\vdots&\cr
i& 0 & \ldots &  0 & p_{i} & 0 & \ldots & 0 &0 &  0 &  \ldots&  0 \cr
&&  & & \vdots&&&&\vdots&\cr
j& 0 & \ldots &  0 &0 & 0 & \ldots & 0 &p_{j} &  0 &  \ldots&  0 \cr
}
\textrm{.}\end{displaymath}

In Section~\ref{subsec:ptln} we showed that infinitesimal rigidity can be transferred between a point-hyperplane framework
in $\mathbb{R}^d$ and a bar-joint framework in $\mathbb{R}^d$ which has a given set of points lying in a hyperplane. We proved this by
showing that this transfer of infinitesimal rigidity works for a point-hyperplane framework $(G,p,\ell)$ in $\mathbb{A}^d$ and the spherical framework
$(G,q)$ in $\mathbb{S}^d$ with $q:V\to \mathbb{S}_{\geq 0}^d$ defined as in (\ref{eq:phi2}). In the following, we present the rigidity matrix
$R_{\mathbb{A}}(G,p,\ell)$ for  $(G,p,\ell)$ in $\mathbb{A}^d$ corresponding to the linear system given in (\ref{eq:line_inf1})-(\ref{eq:a_inf}).
%This matrix will be
%useful in Section........

Let the underlying graph of $(G,p,\ell)$ be $G=(V_P\cup V_L,E)$ and let $p:V_P\to \mathbb{A}^d$ and $\ell:V_L\to \mathbb{S}^{d-1}\times \mathbb{R}$ with $\ell_i=(a_i,r_i)$.  Further, let $i,j\in V_P$ and $i',j'\in V_L$. Then $R_{\mathbb{A}}(G,p,\ell)$ is an $(|E|+|V|)\times (d+1)|V|$ matrix of the following form:
\begin{displaymath}\bordermatrix{& & i & & j & &k& & l&\cr
 \{i,j\} & 0 \ldots  0 & (p_i-p_j) & 0 \ldots  0  & (p_j-p_i) & 0 \ldots  0 & 0 &0 \ldots  0 &  0 &0 \ldots  0 \cr
  & & &  & &  \vdots &  & &   &  \cr
\{i,k\} & 0 \ldots  0 &(a_k,0) & 0 \ldots  0  & 0& 0 \ldots  0 & p_i &0 \ldots  0 &  0 &0 \ldots  0 \cr
 & & &  & &  \vdots &  & &   &  \cr
\{k,l\} & 0 \ldots  0 & 0 & 0 \ldots  0  & 0& 0 \ldots  0 & (a_{l},0) &0 \ldots  0 &  (a_{k},0) &0 \ldots  0 \cr
 & & &  & &  \vdots &  & &   &  \cr
i & 0 \ldots  0 & \mathbf{e} & 0 \ldots  0  & 0& 0 \ldots  0 &0 &0 \ldots  0 & 0 &0 \ldots  0 \cr
& & &  & &  \vdots &  & &   &  \cr
j & 0 \ldots  0 & 0 & 0 \ldots  0  & \mathbf{e}& 0 \ldots  0 &0 &0 \ldots  0 & 0 &0 \ldots  0 \cr
& & &  & &  \vdots &  & &   &  \cr
k & 0 \ldots  0 & 0& 0 \ldots  0  & 0& 0 \ldots  0 &(a_{k},0)&0 \ldots  0 & 0 &0 \ldots  0 \cr
& & &  & &  \vdots &  & &   &  \cr
l & 0 \ldots  0 & 0 & 0 \ldots  0  & 0& 0 \ldots  0 &0 &0 \ldots  0 & (a_{l},0) &0 \ldots  0 \cr
& & &  & &  \vdots &  & &   &  \cr
}
\textrm{.}\end{displaymath}
Since  $r_j$ does not appear in (\ref{eq:line_inf2}), the last coordinate
of $\ell_{k}$  in the row corresponding to $\{i,k\}$ may be assumed to be zero, i.e., $\ell_{k}=(a_{k},0)$. (Recall Section~\ref{subsec:ptln}). It then follows from the definition of the maps $\phi$ and $\psi$  in Section~\ref{subsec:ptln} that this matrix is row equivalent to the %the following matrix.
%\begin{displaymath}\bordermatrix{& & i & & j & &i' & & j' &\cr
% \{i,j\} & 0 \ldots  0 & \phi(p_j) & 0 \ldots  0  & \phi(p_i) & 0 \ldots  0 & 0 &0 \ldots  0 &  0 &0 \ldots  0 \cr
%  & & &  & &  \vdots &  & &   &  \cr
%\{i,i'\} & 0 \ldots  0 & (a_{i'},0) & 0 \ldots  0  & 0& 0 \ldots  0 & \phi(p_i) &0 \ldots  0 &  0 &0 \ldots  0 \cr
% & & &  & &  \vdots &  & &   &  \cr
%\{i',j'\} & 0 \ldots  0 & 0 & 0 \ldots  0  & 0& 0 \ldots  0 & (a_{j'},0) &0 \ldots  0 &  (a_{i'},0) &0 \ldots  0 \cr
% & & &  & &  \vdots &  & &   &  \cr
%i & 0 \ldots  0 & \phi(p_i) & 0 \ldots  0  & 0& 0 \ldots  0 &0 &0 \ldots  0 & 0 &0 \ldots  0 \cr
%& & &  & &  \vdots &  & &   &  \cr
%j & 0 \ldots  0 & 0 & 0 \ldots  0  & \phi(p_j)& 0 \ldots  0 &0 &0 \ldots  0 & 0 &0 \ldots  0 \cr
%& & &  & &  \vdots &  & &   &  \cr
%i' & 0 \ldots  0 & 0& 0 \ldots  0  & 0& 0 \ldots  0 &(a_{i'},0)&0 \ldots  0 & 0 &0 \ldots  0 \cr
%& & &  & &  \vdots &  & &   &  \cr
%j' & 0 \ldots  0 & 0 & 0 \ldots  0  & 0& 0 \ldots  0 &0 &0 \ldots  0 & (a_{j'},0) &0 \ldots  0 \cr
%& & &  & &  \vdots &  & &   &  \cr
%}
%\textrm{.}\end{displaymath}
%This matrix, however, is exactly the
spherical rigidity matrix $R_{\mathbb{S}}(G,q)$, where  $q:V\to \mathbb{S}_{\geq 0}^d$ is defined as in (\ref{eq:phi2}). Note that the rows corresponding to $i$ and $j$ in $R_{\mathbb{A}}(G,p,\ell)$ guarantee that $\dot{p}_i$ and $\dot{p}_j$ lie in $T_{p_i}\mathbb{A}^d$ and $T_{p_j}\mathbb{A}^d$, respectively, for any infinitesimal motion $(\dot{p},\dot{\ell})$ of $(G,p,\ell)$. By row operations the entries $\mathbf{e}$ in those rows are converted to $q_i=\phi(p_i)$ and $q_j=\phi(p_j)$ in $R_{\mathbb{S}}(G,q)$, so that the infinitesimal velocity vectors of $(G,q)$ at $q_i$ and $q_j$ lie in $T_{q_i}\mathbb{S}^d$ and $T_{q_j}\mathbb{S}^d$, respectively.

%%%%%%%%%%%%%%%%%%%%%%%%%%%%%%%
\section{Connection to scene analysis}\label{sec:scene}
In this section we describe a connection between point-hyperplane frameworks and scene analysis.
%\subsection{Parallel Redrawings}

%\TN{Should we use H for hypergraph and reserve G for graphs?}

A {\em $d$-scene} is a pair consisting of a set of points and a set
of hyperplanes in $\mathbb{R}^d$. By using a bipartite graph
$G=(V_P\cup V_L, E)$
%(or a hypergraph)
to represent the point-hyperplane incidences (where each vertex in
$V_P$ corresponds to a point, each vertex in $V_L$ to a hyperplane,
and each edge in $E$ to a point-hyperplane incidence), a $d$-scene
can be formally defined as a triple $(G, p, \ell)$ of a bipartite
graph $G$, and maps $p:V_P\rightarrow \mathbb{R}^d$ and $\ell:V_L\to
\mathbb{S}^{d-1}\times \mathbb{R}$, satisfying the
incidence condition
\begin{equation}
\langle p_i, a_j\rangle+r_j=0 \qquad (ij\in E, i\in V_P, j\in V_L),
\end{equation}
taking $\ell_j= (a_j, r_j)$ for all $j\in V_L$. Given the hyperplane normals $(a_j)_{j\in V_L}$, we can always construct
a $d$-scene with these normals by choosing the points $P_i$ to be coincident, i.e. putting
%for example taking $p_i=0$ for all $i\in V_P$ and $r_j=0$ for all $j\in V_L$, or its translation, i.e.,
$p_i=t\ (i\in V_P)$ and $r_j=-\langle t, a_j\rangle\ (j\in V_L)$ for some $t\in \mathbb{R}^d$. We will call such a $d$-scene {\em trivial}.

 In the {\em realisation problem} (see \cite{WhDiscrete} for example) we are asked whether there is a {\em non-trivial}
$d$-scene with a given set of hyperplane normals $(a_j)_{j\in V_L}$.
Note that the set of all realisations forms a linear space
whose dimension is at least $d$, with equality if and only if  every realisation is trivial. It follows that the existence of a nontrivial realisation can be checked by determining the
dimension of the solution space of the following linear system of
equations for the variables $x:V_p\rightarrow \mathbb{R}^d$ and
$t:E\rightarrow \mathbb{R}$:
\begin{equation}
\label{eq:parallel} \langle x_i, a_j\rangle+t_j=0 \qquad (ij\in E,
i\in V_P, j\in V_L).
\end{equation}

Now let us return to point-hyperplane frameworks, and consider the
restricted rigidity model when the normal of each hyperplane is
fixed. We can obtain the first order constraints for a fixed-normal point-hyperplane framework
$(G,p,\ell)$ with $G=(V_P\cup V_L, E)$ by setting $\dot{a}_j=0$ in the system (\ref{eq:line_inf1})-(\ref{eq:a_inf}). This gives
\begin{align}
\langle p_i - p_j, \dot{p}_i-\dot{p}_j\rangle&=0 && (ij\in E_{PP}) \label{eq:fixed_inf1} \\
\langle \dot{p}_i, a_j\rangle+\dot{r}_j&=0 && (ij\in E_{PL}) \label{eq:fixed_inf2},
\end{align}
where  $\dot{p}$ and $\dot{r}$ are variables. We say that the
 $(G,p,\ell)$ is {\em infinitesimally fixed-normal rigid} if
the dimension of the space of infinitesimal motions, i.e. the
solution space of the  system of equations (\ref{eq:fixed_inf1})
and (\ref{eq:fixed_inf2}), is equal to $d$.
Note that the system of equations (\ref{eq:fixed_inf2}) depends only
 on the normals $(a_j)_{j\in V_L}$. This implies  that the
infinitesimal fixed-normal rigidity of $(G,p,\ell)$ depends only on
the normals $(a_j)_{j\in V_L}$ when $(G,p,\ell)$ is {\em naturally
bipartite} i.e. when all constraints are point-hyperplane distance
constraints.

Now observe that  (\ref{eq:parallel}) and (\ref{eq:fixed_inf2})
represent exactly the same system of equations by identifying $x$
with $\dot{p}$ and $t$ with $\dot{r}$. This means that, for any
bipartite graph $G=(V_P\cup V_L,E)$ and any fixed set of hyperplane
normals $(a_j)_{j\in V_L}$, every realisation of $G$ as a
$d$-scene with hyperplane normals $(a_j)_{j\in V_L}$ is trivial
if and only if every realisation of $G$ as a naturally bipartite
point-hyperplane framework with hyperplane normals $(a_j)_{j\in V_L}$ is
infinitesimally fixed-normal rigid.

Whiteley \cite{WhHypergraph} gave a combinatorial characterization
%of the system (\ref{eq:parallel})
of generic $d$-scenes
which have  only trivial realisations.
By the above discussion, this  gives a combinatorial characterization of the
infinitesimal fixed-normal rigidity of  naturally bipartite
point-hyperplane frameworks with generic hyperplane normals, i.e., the set of entries in $(a_j)_{j\in V_L}$ is algebraically independent over $\mathbb{Q}$.

\begin{theorem}
\label{thm:fixed_normal} Let $G=(V_P\cup V_L, E)$ be a  bipartite
graph. Then the following are equivalent.
\begin{enumerate}
\item The dimension of the solution space of system (\ref{eq:parallel}) is equal to $d$ for some
$(a_j)_{j\in V_L}$.
\item Every realisation of $G$ as a $d$-scene with generic hyperplane normals is trivial.
\item Every realisation of $G$ as a point-hyperplane framework in $\R^d$ with generic hyperplane normals is infinitesimally fixed-normal rigid.
\item $G$ contains a spanning subgraph $G'=(V_P\cup V_L, E')$ with $|E'|=d|V_P|+|V_L|-d$ such that
$|A|\leq d\,\nu_{V_P}(A)+\nu_{V_L}(A)-d$ for all $\emptyset \neq A\subseteq E'$.
 \item For any partition $\{A_1,\dots, A_s\}$ of  E,
 \[
 \sum_{i=1}^s (d\,\nu_{V_P}(A_i)+\nu_{V_L}(A_i)-d)  \geq  d|V_P|+|V_L|-d.
 \]
 \end{enumerate}
 \end{theorem}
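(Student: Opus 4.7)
The plan is to derive the five equivalences from Whiteley's scene analysis characterization \cite{WhHypergraph} together with the identification of systems (\ref{eq:parallel}) and (\ref{eq:fixed_inf2}) noted in the paragraphs preceding the theorem. I would split the argument into three pieces: a ``dictionary'' step matching (2) and (3), a genericity step matching (1) and (2), and an invocation of Whiteley's hypergraph theorem for (2)--(4)--(5).

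For (2) $\Leftrightarrow$ (3), I would appeal directly to the observation already made in the text: since $G$ is bipartite (so $E_{PP}=E_{LL}=\emptyset$), systems (\ref{eq:parallel}) and (\ref{eq:fixed_inf2}) are the same linear system under the identification $x\leftrightarrow\dot{p}$, $t\leftrightarrow\dot{r}$. The condition that every $d$-scene realisation is trivial is precisely that the solution space has dimension $d$; this is the same condition as infinitesimal fixed-normal rigidity.

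For (1) $\Leftrightarrow$ (2), I would use a semicontinuity argument. The coefficient matrix of (\ref{eq:parallel}) has entries polynomial in the $(a_j)_{j\in V_L}$, so its rank attains its maximum on a Zariski-open subset of parameter values. Since the solution space always contains the $d$-dimensional space of trivial solutions, its dimension attains the value $d$ for some choice of $(a_j)$ if and only if it attains $d$ for generic $(a_j)$.

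Finally, (2) $\Leftrightarrow$ (4) $\Leftrightarrow$ (5) is the content of Whiteley's theorem on generic $d$-scenes \cite{WhHypergraph}, reformulated in bipartite-graph language. I would view $G=(V_P\cup V_L, E)$ as a hypergraph on $V_P$ whose hyperedges are the neighborhoods in $G$ of the vertices of $V_L$, and then translate Whiteley's sparsity characterization. The bound $d\,\nu_{V_P}(A)+\nu_{V_L}(A)-d$ has a natural reading: $d$ coordinates per active point, one scalar $\dot{r}_j$ per active hyperplane, minus the $d$-dimensional space of translational solutions. The equivalence (4) $\Leftrightarrow$ (5) then follows from the matroid union / Edmonds--Nash-Williams theorem applied to the sparsity matroid defined by $f(A)=d\,\nu_{V_P}(A)+\nu_{V_L}(A)-d$: condition (4) asserts the existence of a spanning independent set of size $d|V_P|+|V_L|-d$ in this matroid, while (5) is the equivalent partition lower bound on its rank. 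The main obstacle, as I see it, lies in the careful bookkeeping of this translation: one must verify submodularity of $f$ on the relevant family of edge-sets so that the matroid-union formula genuinely applies, and match Whiteley's hypergraph sparsity counts to $d\,\nu_{V_P}(A)+\nu_{V_L}(A)-d$ without an off-by-one error. The linear algebra and genericity steps are routine; the combinatorial content is what requires care.
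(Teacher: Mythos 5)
Your proposal follows essentially the same route as the paper: the paper likewise derives (a)$\Leftrightarrow$(b)$\Leftrightarrow$(c) from the preceding identification of systems (\ref{eq:parallel}) and (\ref{eq:fixed_inf2}) together with the genericity observation, obtains the combinatorial characterization (d) from Whiteley's Theorem 4.1 in \cite{WhHypergraph}, and gets (d)$\Leftrightarrow$(e) from Edmonds' result \cite{E} on matroids induced by submodular functions. The only cosmetic difference is that you attach Whiteley's theorem to statement (b) rather than (a), which is immaterial once (a)$\Leftrightarrow$(b) is established.
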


\begin{proof} The equivalence of (a), (b) and (c) follows from
the above discussion.
The equivalence of (a) and (d) follows from \cite[Theorem
4.1]{WhHypergraph}.
The equivalence of (d) and (e) follows from a
result of Edmonds \cite{E} on matroids induced by submodular
functions.
\end{proof}

Note that the problem of characterising  fixed normal rigidity of
generic point-hyperplane frameworks in $\R^d$ which are not naturally bipartite is at least as difficult as that
of characterising  the rigidity of generic bar-joint frameworks in $\R^d$,
which is notoriously difficult when $d\geq 3$. We will solve the fixed normal rigidity problem when $d=2$ in the next section.

\section{Combinatorial characterizations of constrained point-line frameworks in the plane} \label{sec:constrptln}

Bar-joint frameworks with pinned vertices can be understood by deleting the corresponding columns from the rigidity matrix. In this section we investigate analogous constrained point-line frameworks with either fixed lines, fixed normals or fixed intercepts.

%%%%%%%%%%%%%%%%%%%%%%%%%%%
\subsection{Fixed-line rigidity}\label{subsec:pin}
We begin with the {\em fixed-line rigidity} of point-hyperplane frameworks in $\mathbb{R}^d$.
In this rigidity model, each line is fixed and hence has no velocity.
More formally, given a point-hyperplane framework $(G, p, \ell)$, we are interested in the following system:
\begin{align*}
\langle p_i-p_j, \dot{p}_i-\dot{p}_j\rangle&=0 && (ij\in E_{PP}) \\
\langle \dot{p}_i, a_j\rangle&=0 && (ij\in E_{PL})
\end{align*}
obtained by setting $\dot{a}_j=0$ and $\dot{r}_j=0$ in the system (\ref{eq:line_inf1_euc})-(\ref{eq:a_inf_euc}).
We say that $(G,p,\ell)$ is {\em infinitesimally fixed-line rigid} if this system has no nonzero solution.

By the results of Section~2, we know how to convert a point-hyperplane framework $(G,p,\ell)$ in $\mathbb{R}^d$
to a bar-joint framework $(G,q)$ in $\mathbb{R}^d$
in such a way that infinitesimal rigidity is preserved.\footnote{Although the transformation was presented in affine space $\mathbb{A}^d$, it can be extended to Euclidean space $\mathbb{R}^d$ by simply first lifting $p_i \ (i\in V_P)$ to $\hat{p}_i=\begin{pmatrix} p_i \\ 1\end{pmatrix}$,
applying the transformation to $(G, \hat{p}, \ell)$ to obtain $(G, \hat{q})$, and then projecting $\hat{q}_i=\begin{pmatrix} q_i \\ 1\end{pmatrix}\ (i\in V)$ to $q_i$.}
From the isomorphism between the spaces of infinitesimal motions of $(G,p,\ell)$
and $(G,q)$ (given in the proof of Theorem~\ref{thm:transfer2}), it is easy to see that $\dot{\ell}_i=0$ if and only if the corresponding $\dot{q}_i=0$ for $i\in V_L$.
This implies that  $(G, p, \ell)$ is infinitesimally fixed-line rigid if and only if
$(G, q)$ is an infinitesimally rigid bar-joint framework under the constraint that the vertices in $V_L$ are pinned.

The rigidity of {\em pinned bar-joint frameworks} is a classical concept, and in $\mathbb{R}^2$ several combinatorial characterizations are known.
Here we should be careful since, as shown in Theorem~\ref{thm:transfer2}, the points in $q(V_L)$ all lie on a line, and hence $(G,q)$ may not be a generic bar-joint framework.
Fortunately, Servatius et al.~\cite[Theorem 4]{ShaiServWh} (see also~\cite[Theorem 7.5]{KatTan})
already gave a characterization of the infinitesimal rigidity of pinned bar-joint frameworks in $\mathbb{R}^2$ in which the assumption of genericity is not required for the  positions of the pinned vertices.
This gives us the following characterization of infinitesimal fixed-line rigidity.

\begin{theorem}\label{thm:fixedline}
Let $G=(V_P\cup V_L, E)$ be a point-line graph and let $a_i\in \mathbb{S}^1$ for each $i\in V_L$.
Then $G$ can be realised as a minimally infinitesimally fixed-line rigid
point-line framework in $\mathbb{R}^2$ such that
each $i\in V_L$ is realised  as the line with normal $a_i$ if and only if
$|E|=2|V_P|$
and
\[
|F|\leq 2\nu_{V_P}(F)-3+\min\{3, 2a(F)\}
\]
for all nonempty $F\subseteq E$, where
\[
a(F):=\dim\langle a_i: i\in V_L(F)\rangle.
\]
\end{theorem}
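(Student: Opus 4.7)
The plan is to use the transformation of Section~\ref{subsec:ptln} to convert the fixed-line rigidity problem into the more classical question of pinned bar-joint rigidity in $\mathbb{R}^2$, and then invoke the characterization of Servatius et al.~\cite{ShaiServWh}. By Theorem~\ref{thm:transfer2}, any realization $(G,p,\ell)$ corresponds to a bar-joint framework $(G,q)$ in $\mathbb{R}^2$ with $q(V_L)$ lying on a common line, and the isomorphism of infinitesimal motion spaces furnished by the transformation sends $\dot{\ell}_i$ to $\dot{q}_i$ for each $i\in V_L$. Thus fixing the lines in $(G,p,\ell)$ translates precisely to pinning the vertices of $V_L$ in $(G,q)$, and $(G,p,\ell)$ is minimally infinitesimally fixed-line rigid if and only if $(G,q)$ is a minimally infinitesimally rigid pinned bar-joint framework.

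I would next invoke \cite[Theorem 4]{ShaiServWh} (see also~\cite[Theorem 7.5]{KatTan}), which characterizes minimal pinned rigidity in $\mathbb{R}^2$ without requiring the pinned positions to be generic---a feature that is crucial here because $q(V_L)$ is forced to be collinear. Their characterization takes the form $|E|=2|V_P|$ together with $|F|\le 2\nu_{V_P}(F)-\tau(F)$ for every nonempty $F\subseteq E$, where $\tau(F)$ is the dimension of the space of rigid planar motions that leave the pinned vertices in $V_L(F)$ (at their specified positions in $q$) fixed. The main task is then to re-express $\tau(F)$ in terms of $a(F)$. Tracing $\phi^{-1}\circ\iota\circ\gamma\circ\phi$, each $i\in V_L$ first appears as the equatorial point $(a_i,0)$ in the spherical framework, and since the antipodal identification performed by $\iota$ is the only non-injective step, one checks that $q_i=q_j$ in the plane if and only if $a_i=\pm a_j$. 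Consequently $a(F)=0$ forces $V_L(F)=\emptyset$ and $\tau(F)=3$; $a(F)=1$ forces all pinned vertices in $V_L(F)$ to coincide at one point, leaving only rotation about that point and giving $\tau(F)=1$; and $a(F)=2$ forces at least two pinned vertices in $V_L(F)$ to occupy distinct positions, killing all trivial motions so that $\tau(F)=0$. In every case $\tau(F)=3-\min\{3,2a(F)\}$, turning the Servatius et al.\ inequality into the bound stated in the theorem.

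For the existence direction, once the combinatorial conditions hold we pick the normals $a_i$ as prescribed, choose the auxiliary rotation $\gamma$ and the map $p:V_P\to\mathbb{R}^2$ generically, and run the transformation backwards from a minimally pinned-rigid $(G,q)$ produced by the theorem of Servatius et al.; genericity of $p$ ensures $q(V_P)$ is generic relative to the already-fixed $q(V_L)$, which is exactly what their theorem requires. Necessity follows by running the same equivalence forwards. The hard part will be the case analysis linking $\tau(F)$ to $a(F)$, and in particular verifying the dichotomy ``$q_i=q_j\iff a_i=\pm a_j$'' that precisely identifies the three values of $a(F)$ with the three possible values of $\tau(F)$. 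A minor additional point is that edges in $E_{LL}$ contribute no effective constraint in the fixed-line model (both endpoints have vanishing velocity) and are ruled out by applying the local inequality to a single such edge, so the statement implicitly forces $E_{LL}=\emptyset$ in any minimally fixed-line rigid $G$.
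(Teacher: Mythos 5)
Your proposal follows essentially the same route as the paper: transfer the fixed-line framework to a pinned bar-joint framework with collinear pins via the Section~2 transformation (noting $\dot{\ell}_i=0$ iff $\dot{q}_i=0$), then apply the non-generic-pin characterization of Servatius et al.\ \cite[Theorem 4]{ShaiServWh}. Your extra bookkeeping --- the equivalence $q_i=q_j \iff a_i=\pm a_j$ and the resulting identity $\tau(F)=3-\min\{3,2a(F)\}$ --- is correct and in fact spells out the count translation that the paper leaves implicit.
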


An example illustrating Theorem~\ref{thm:fixedline} is shown in Figure~\ref{fig:pinnedlines}.

%%%%%%%%%%%%%%%%%%%%%
\begin{figure}[htp]
\begin{center}
%%%%%%%%%%%%%%%%%%%%%%%%%%%%%%%
%{\includegraphics[scale=.28]{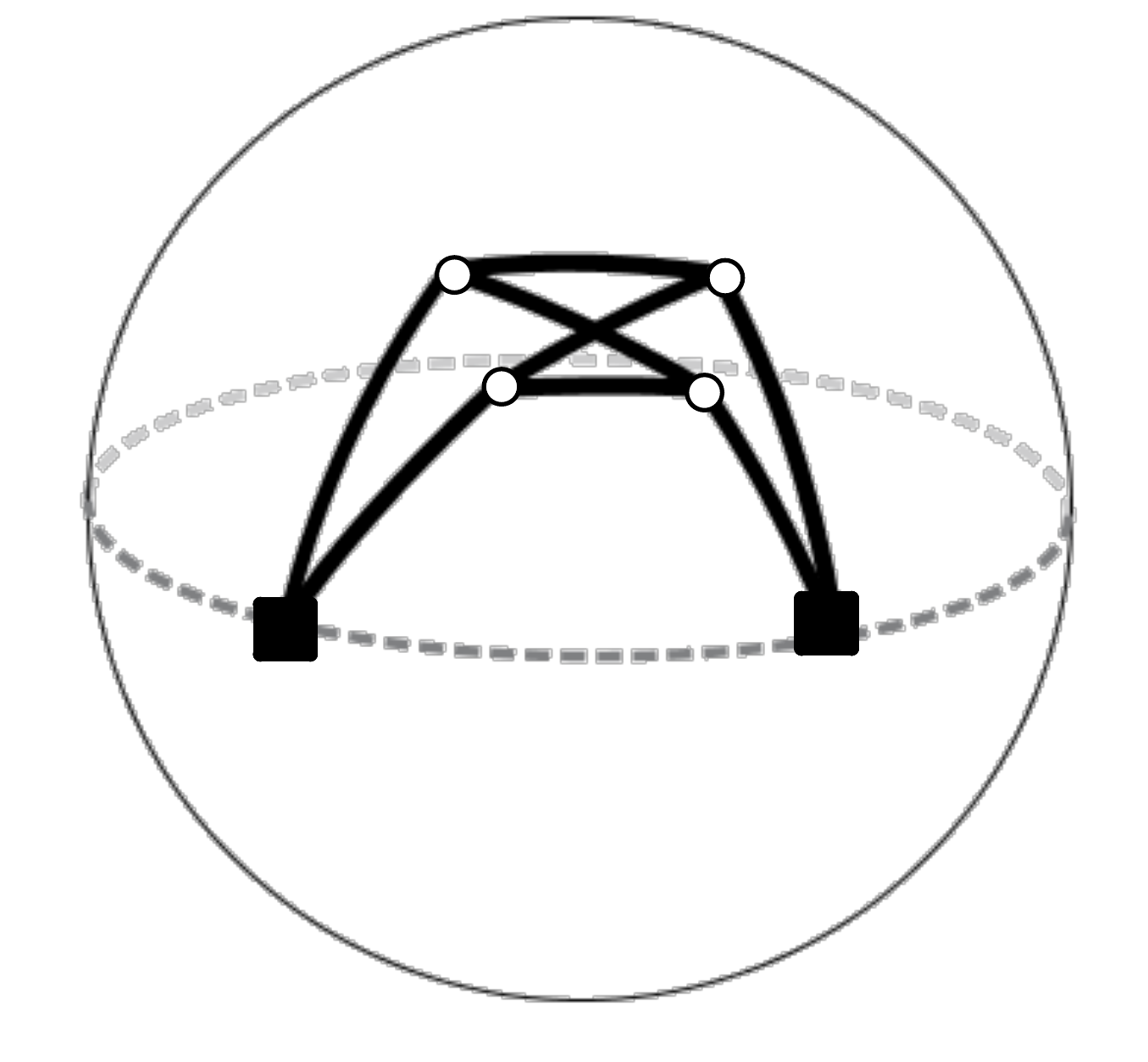}}\quad
%%%%%%%%%%%%%%%%%%%%%%%%%%%%%%%
%{\includegraphics[scale=.26]{FixedSpherePole2.pdf}}
%\hspace{1cm}
%%%%%%%%%%%%%%%%%%%%%%%%%%%%%%%
\begin{minipage}{1.2cm}\begin{center}
\begin{tikzpicture}[scale=.8]
\tikzstyle{every node}=[circle, draw=black, very thick, fill=white, inner sep=0pt, minimum width=4pt];

\draw[very thick](-0.2,-0.2)--(3,3);
\draw[very thick](-0.3,0)--(4.3,0);

\draw(-0.2,0)--(-0.4,-0.2);
\draw(0,0)--(-0.2,-0.2);
\draw(0.2,0)--(0,-0.2);
\draw(0.4,0)--(0.2,-0.2);
\draw(0.6,0)--(0.4,-0.2);
\draw(0.8,0)--(0.6,-0.2);
\draw(1,0)--(0.8,-0.2);
\draw(1.2,0)--(1,-0.2);
\draw(1.4,0)--(1.2,-0.2);
\draw(1.6,0)--(1.4,-0.2);
\draw(1.8,0)--(1.6,-0.2);
\draw(2,0)--(1.8,-0.2);
\draw(2.2,0)--(2,-0.2);
\draw(2.4,0)--(2.2,-0.2);
\draw(2.6,0)--(2.4,-0.2);
\draw(2.8,0)--(2.6,-0.2);
\draw(3,0)--(2.8,-0.2);
\draw(3.2,0)--(3,-0.2);
\draw(3.4,0)--(3.2,-0.2);
\draw(3.6,0)--(3.4,-0.2);
\draw(3.8,0)--(3.6,-0.2);
\draw(4,0)--(3.8,-0.2);
\draw(4.2,0)--(4,-0.2);

\draw(0.2,0.2)--(-0.1,0.2);
\draw(0.4,0.4)--(0.1,0.4);
\draw(0.6,0.6)--(0.3,0.6);
\draw(0.8,0.8)--(0.5,0.8);
\draw(1,1)--(0.7,1);
\draw(1.2,1.2)--(0.9,1.2);
\draw(1.4,1.4)--(1.1,1.4);
\draw(1.6,1.6)--(1.3,1.6);
\draw(1.8,1.8)--(1.5,1.8);
\draw(2,2)--(1.7,2);
\draw(2.2,2.2)--(1.9,2.2);
\draw(2.4,2.4)--(2.1,2.4);
\draw(2.6,2.6)--(2.3,2.6);
\draw(2.8,2.8)--(2.5,2.8);
\draw(3,3)--(2.7,3);

\draw(-0.1,-0.1)--(-0.4,-0.1);
\draw(0.1,0.1)--(-0.2,0.1);
\draw(0.3,0.3)--(0,0.3);
\draw(0.5,0.5)--(0.2,0.5);
\draw(0.7,0.7)--(0.4,0.7);
\draw(0.9,0.9)--(0.6,0.9);
\draw(1.1,1.1)--(0.8,1.1);
\draw(1.3,1.3)--(1,1.3);
\draw(1.5,1.5)--(1.2,1.5);
\draw(1.7,1.7)--(1.4,1.7);
\draw(1.9,1.9)--(1.6,1.9);
\draw(2.1,2.1)--(1.8,2.1);
\draw(2.3,2.3)--(2,2.3);
\draw(2.5,2.5)--(2.2,2.5);
\draw(2.7,2.7)--(2.4,2.7);
\draw(2.9,2.9)--(2.6,2.9);

\node [draw=white, fill=white,rectangle] (a) at (1.1,1.8) {$v_2$};
\node [draw=white, fill=white,rectangle] (a) at (2,-0.5) {$v_1$};
\node [draw=white, fill=white,rectangle] (a) at (1.65,0.5) {$u_1$};
\node [draw=white, fill=white,rectangle] (a) at (3.9,0.6) {$u_2$};
\node [draw=white, fill=white,rectangle] (a) at (2.3,1.4) {$u_3$};
\node [draw=white, fill=white,rectangle] (a) at (3.9,1.7) {$u_4$};

\path (2,0.5) node (u1) {};
\path (3.5,0.6) node (u2) {};
\path (2,1.2) node (u3) {};
\path (3.5,1.7) node (u4) {};

\draw[dashed](u1)--(2,0);
\draw[dashed](u2)--(3.5,0);

\draw[very thick](u1)--(u3);
\draw[very thick](u2)--(u4);
\draw[very thick](u1)--(u4);
\draw[very thick](u2)--(u3);

\draw[dashed](u3)--(1.6,1.6);
\draw[dashed](u4)--(2.6,2.6);

%\node [draw=white, fill=white,rectangle] (a) at (2,-1.1) {(a)};

\end{tikzpicture}
\\\vspace{0.48cm} (a) \end{center}\end{minipage}
\hspace{3cm}
%%%%%%%%%%%%%%%%%%%%%%%%%%
%{\includegraphics[scale=.26]{FixedTurn.pdf}}
%\hspace{1cm}
%{\includegraphics[scale=.24]{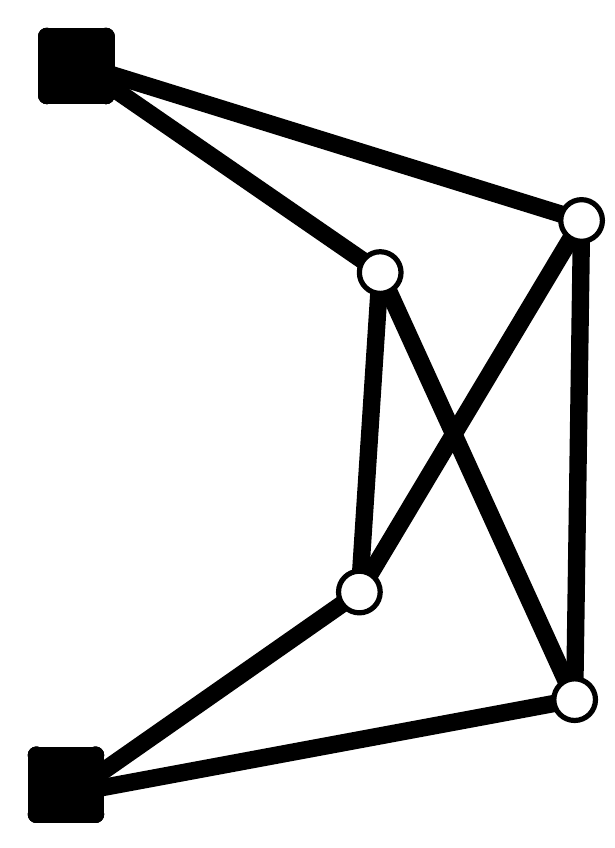}}
%\hspace{1cm}
%%%%%%%%%%%%%%%%%%%%%%%%%
\begin{minipage}{1.2cm}\begin{center}
\begin{tikzpicture}[scale=1]
\tikzstyle{every node}=[circle, draw=black, very thick, fill=white, inner sep=0pt, minimum width=4pt];

\path (0,0.5) node (u1) {};
\path (2,0.5) node (u2) {};
\path (0,1.7) node (u3) {};
\path (2,1.7) node (u4) {};

\filldraw(0.9,2.1) rectangle (1.1,2.3);
\filldraw(0.9,-0.1) rectangle (1.1,0.1);

% \filldraw[draw=black] (1,2.2) circle (3pt);
% \filldraw[draw=black] (1,0) circle (3pt);

\node [draw=white, fill=white,rectangle] (a) at (1.,2.5) {$v_2$};
\node [draw=white, fill=white,rectangle] (a) at (1,-0.4) {$v_1$};
\node [draw=white, fill=white,rectangle] (a) at (-0.4,0.5) {$u_1$};
\node [draw=white, fill=white,rectangle] (a) at (2.4,0.5) {$u_2$};
\node [draw=white, fill=white,rectangle] (a) at (-0.4,1.7) {$u_3$};
\node [draw=white, fill=white,rectangle] (a) at (2.4,1.7) {$u_4$};

\draw[very thick](u1)--(u3);
\draw[very thick](u2)--(u4);
\draw[very thick](u1)--(u4);
\draw[very thick](u2)--(u3);

\draw[very thick](1,0)--(u1);
\draw[very thick](1,0)--(u2);
\draw[very thick](1,2.2)--(u3);
\draw[very thick](1,2.2)--(u4);

%\node [draw=white, fill=white,rectangle] (a) at (1,-1.1) {(b)};

\end{tikzpicture}
\\\vspace{0.3cm} (b) \end{center}\end{minipage}
\hspace{3cm}
\begin{minipage}{1.2cm}\begin{center}{\includegraphics[scale=.27]{FixedSphere.pdf}}\\\vspace{0.1cm} (c) \end{center}\end{minipage}
\hspace{3cm}
\begin{minipage}{1.2cm}\begin{center}{\includegraphics[scale=.27]{FixedProj.pdf}}\\\vspace{1.25cm} (d) \end{center}\end{minipage}
%(d){\includegraphics[scale=.3]{FixedProj.pdf}}
%\begin{minipage}{0.4\textwidth}
%\centering
%{\includegraphics[scale=.3]{FixedSphere.pdf}}
%\par
%(c)
%\end{minipage}
%\begin{minipage}{0.4\textwidth}
%\centering
%{\includegraphics[scale=.34]{FixedProj.pdf}}
%\par
%\vspace{.8cm}
%(d)
%\end{minipage}
\caption{A minimally infinitesimally fixed-line rigid point-line framework (a) and its associated point-line graph (b).
The corresponding framework on the sphere (c) and its projection to the plane (d).
%A minimally infinitesimally fixed-line rigid spherical framework (a), the equivalent polar great circle point spherical framework (b), the corresponding point-line framework in the plane (c), a turned spherical framework (d) and the projected equivalent fixed point plane framework (e) and its associated point-line graph (f).}
}
\label{fig:pinnedlines}
\end{center}
\end{figure}

%%%%%%%%%%%%%%%%%%%%%%%%%%%%%%%
\subsection{Fixed-normal rigidity}\label{subsec:normal}

We introduced the fixed-normal rigidity of a point-hyperplane framework $(G,p,\ell)$ in Section \ref{sec:scene} and observed that the infinitesimal motions of $(G,p,\ell)$ which preserve the normals of the hyperplanes are determined by the system of equations
\begin{align}
\langle p_i - p_j, \dot{p}_i-\dot{p}_j\rangle&=0 && (ij\in E_{PP}) \label{eq:line_inf1fixednormal} \\
\langle \dot{p}_i, a_j\rangle+ \dot r_j&=0 && (ij\in E_{PL})\,. \label{eq:line_inf2fixednormal}
\end{align}
%We then used a result of Whiteley from scene analysis to characterise fixed-normal rigidity of naturally bipartite point-hyperplane graphs in $\mathbb{R}^d$.
Owen and Power \cite{OP} had previously used a recursive construction to characterise the fixed-normal rigidity of generic point-line frameworks in $\mathbb{R}^2$. We will show that their result can be deduced from Theorem \ref{thm:barjointcol}.

\begin{thm}\label{thm:fixednormal}
Let $G=(V_P\cup V_L,E)$ be a point-line graph with $|V_P|\geq1$ and $|V_L|\geq2$ and $T$ be the edge set of a tree with vertex set $V_L$. Then the following statements are equivalent:\\
(a) $G$ can be realised as a  point-line  framework in $\mathbb{R}^2$ which is minimally infinitesimally fixed-normal rigid;\\
(b) $G+T$ can be realized as an infinitesimally rigid bar-joint framework in $\mathbb{R}^2$ such that the points assigned to $V_L$ are collinear;\\
(c) $|E|= 2|V_P|+|V_L|-2$,
$|F|\leq 2\nu_{V_P}(F)-3$ for all $\emptyset\neq F\subseteq E$ with $\nu_{V_L}(F)=0$, and
$|F|\leq 2\nu_{V_P}(F)+\nu_{V_L}(F)-2$ for all $\emptyset\neq F\subseteq E$.
\end{thm}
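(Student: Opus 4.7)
I would prove the equivalences in the order $(a)\Leftrightarrow(b)\Leftrightarrow(c)$. The first equivalence rests on a direct linear-algebra comparison between the fixed-normal system on $G$ and the standard point-line system on $G+T$; the second reduces to invoking Theorem~\ref{thm:barjointcol} applied to $G+T$ with $X=V_L$.

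For $(a)\Leftrightarrow(b)$, fix a realisation $(G,p,\ell)$ in which no two lines are parallel. For each $i\in V_L$, parameterise $\dot a_i = \lambda_i a_i^\perp$ with $\lambda_i\in\mathbb{R}$. Each tree edge $ij\in T$ contributes the angle constraint $\langle a_i,\dot a_j\rangle+\langle\dot a_i,a_j\rangle=0$, which simplifies to $(\lambda_j-\lambda_i)\langle a_i,a_j^\perp\rangle=0$, so the non-parallel assumption forces $\lambda_i=\lambda_j$. Connectedness of $T$ then collapses all $\lambda_i$ to a single value $\lambda$. The linear map sending a standard infinitesimal motion of $(G+T,p,\ell)$ to $\lambda$ is surjective (a global rotation of the plane realises any $\lambda$) and has kernel equal to the fixed-normal motion space of $(G,p,\ell)$, giving
\[
\dim\ker R_{\mathrm{std}}(G+T,p,\ell)=\dim\ker R_{\mathrm{fn}}(G,p,\ell)+1.
\]
Minimal fixed-normal rigidity of $(G,p,\ell)$ (kernel dimension $2$) is thus equivalent to minimal standard point-line rigidity of $(G+T,p,\ell)$ (kernel dimension $3$). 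Corollary~\ref{cor:regulartransfer} with $X=V_L$ then translates the latter into (b).

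For $(b)\Leftrightarrow(c)$, apply Theorem~\ref{thm:barjointcol} to $G+T$ with $X=V_L$. The count $|E|=2|V_P|+|V_L|-2$ in (c) together with $|T|=|V_L|-1$ forces $|E(G+T)|=2|V|-3$, so the spanning subgraph $G'$ appearing in Theorem~\ref{thm:barjointcol}(c) must equal $G+T$ itself, and it remains to equate the partition condition on $G+T$ with the subset conditions in (c). For $(b)\Rightarrow(c)$, given $F\subseteq E(G)$ with $\nu_{V_L}(F)\geq 1$, let $T_F$ be the minimal subtree of $T$ spanning $V_L(F)$ with vertex set $W$, so $|T_F|=|W|-1$. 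Applying the partition condition to $A=F\cup T_F$ with the partition $\{F\}\cup\{\{e\}:e\in T_F\}$ yields, since each singleton tree edge contributes $2\cdot 0+2-2=0$ to the partition sum, the bound $|F|\leq 2\nu_{V_P}(F)+\nu_{V_L}(F)-2$; the case $\nu_{V_L}(F)=0$ follows by applying a single-part partition to $A=F$. For $(c)\Rightarrow(b)$, given any $A\subseteq E(G+T)$ and any partition $\{A_i\}$, decompose each $A_i=A_i^G\cup A_i^T$, apply the (c)-bounds to each $A_i^G$, and use $|A_i^T|=\nu_{V_L}(A_i^T)-c(A_i^T)$ (vertices minus components) for each subforest contribution. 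These combine, through careful bookkeeping of $V_L$-overlap between $A_i^G$ and $A_i^T$ and of the global correction $\nu_{V_L}(A)-1$, to give the partition inequality.

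The main obstacle is the $(c)\Rightarrow(b)$ direction: verifying the partition inequality for every $A$ and every partition requires careful tracking of which $V_L$-vertices are covered by $A_i^G$ versus by $A_i^T$, of the component counts of each subforest $A_i^T$, and of how the correction $\nu_{V_L}(A)-1$ absorbs the partition's internal over-counting of $V_L$-vertices across parts. The case analysis naturally splits according to whether each $A_i^G$ is empty, lies in $E_{PP}$ (where the stricter bound $|A_i^G|\leq 2\nu_{V_P}(A_i^G)-3$ must be used), or meets $V_L$ nontrivially.
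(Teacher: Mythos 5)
Your proposal covers the same essential ground as the paper's proof but organises it differently. The paper proves the cycle (a)$\Rightarrow$(c)$\Rightarrow$(b)$\Rightarrow$(a): it dismisses (a)$\Rightarrow$(c) as straightforward, proves (c)$\Rightarrow$(b) by exactly the partition computation you sketch (bounding $\sum_i(2\nu_{V_P}(A_i)+\nu_{V_L}(A_i)-2)+\nu_{V_L}(A)-1$ from below by splitting each part of the partition into its tree and non-tree edges), and proves (b)$\Rightarrow$(a) by adding the single constraint $\langle a_{i^*}^{\perp},\dot a_{i^*}\rangle=0$, noting that this cuts the motion space from dimension $3$ to $2$ because rotations survive, and then propagating $\dot a_j=0$ along the tree. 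Your $\lambda$-parametrisation of the normal velocities is a clean repackaging of that same argument: surjectivity of the map $\dot{(\cdot)}\mapsto\lambda$ is the paper's ``rotations lie in the solution space'', and the collapse of all $\lambda_i$ to a common value along $T$ is its inductive step. Your version has the advantage of yielding the exact identity $\dim\ker R_{\mathrm{std}}(G+T,p,\ell)=\dim\ker R_{\mathrm{fn}}(G,p,\ell)+1$ and hence both directions of (a)$\Leftrightarrow$(b) at once, though you should justify that a realisation with no two lines parallel may be chosen (a routine genericity/openness argument). Your genuinely new ingredient is the direct (b)$\Rightarrow$(c) via the tailored partitions $\{F\}\cup\{\{e\}:e\in T_F\}$, which the paper never carries out because it routes that implication through (a).

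That direction is also where the one real gap sits. You identify the spanning subgraph $G'$ of Theorem~\ref{thm:barjointcol}(c) with $G+T$ by invoking $|E|=2|V_P|+|V_L|-2$ --- but that count is the first assertion of (c) and is precisely part of what you must prove; it does not follow from (b), since (b) is preserved under adding edges to $G$ while the exact count in (c) is not. (Strictly speaking (b) as literally stated cannot imply (c); the paper has the mirror-image defect, as its (b)$\Rightarrow$(a) step establishes fixed-normal rigidity but never the minimality that (a) demands. The intended reading is evidently that the framework in (b) is isostatic.) Under that reading your argument closes: isostaticity forces $|E(G+T)|=2|V|-3$, hence $G'=G+T$, and your partition choices then deliver both subset counts in (c). You should either adopt that reading explicitly or reroute (b)$\Rightarrow$(c) through (a), using row-independence of the fixed-normal rigidity matrix to extract the edge count, as the paper implicitly intends.
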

\begin{proof}
It is straightforward to show that (a) implies (c).

Suppose that $G$ satisfies (c).  We will show that $G+T$ satisfies (b) by showing it satisfies
the conditions of Theorem \ref{thm:barjointcol}(c) with $V\sm X=V_P$ and $X=V_L$. Since $|E|= 2|V_P|+|V_L|-2$, $G+ T$ has $2|V|-3$ edges.
Choose a nonempty $A\subseteq E\cup T$, let $\scra=\{A_1,A_2,\ldots,A_s\}$ be a partition of $A$ and let $\scra'=\{A_i\in \scra\,:\,A_i\sm T\neq \emptyset\}$. Then  \begin{eqnarray*}
\sum_{A_i\in \scra}(2\nu_{V_P}(A_i)+\nu_{V_L}(A_i)-2)+\nu_{V_L}(A)-1 &\geq& \sum_{A_i\in \scra'}(2\nu_{V_P}(A_i\sm T)+\nu_{V_L}(A_i\sm T)-2)+|A\cap T|\\
&\geq& \sum_{A_i\in \scra'}|A_i\sm T|+|A\cap T|=|A|.
\end{eqnarray*}
Thus $G+ T$ satisfies the condition of Theorem \ref{thm:barjointcol}(c). Hence $G+T$ also satisfies Theorem \ref{thm:barjointcol}(a) so (b) holds

Finally we suppose that (b) holds. Then
$G+ T$ can be realised as an infinitesimally rigid point-line framework $(G+ T,p,\ell)$.
This implies that the dimension of the solution space of the system (\ref{eq:line_inf1_euc})-(\ref{eq:a_inf_euc}) for $(G+ T,p,\ell)$ is equal to three.
Choose a special vertex  $i^*\in V_L$, and add the extra constraint
\begin{equation}
\label{eq:4_2_1}
\langle a_{i^*}^{\bot}, \dot{a}_{i^*}\rangle=0
\end{equation}
to the system (\ref{eq:line_inf1_euc})-(\ref{eq:a_inf_euc}), where $x^{\bot}$ denotes the $\pi/2$ clockwise rotation of a vector $x\in \mathbb{R}^2$.
 Since  the system (\ref{eq:line_inf1_euc})-(\ref{eq:a_inf_euc}) contains a rotation in its solution space, adding the extra equation (\ref{eq:4_2_1}) decreases the dimension of the solution space by one.

 Note that, in  the system (\ref{eq:line_inf1_euc})-(\ref{eq:a_inf_euc}) for $(G+ T, p,\ell)$,
 each edge in $T$ gives the following constraint:
 \begin{equation}
\label{eq:4_2_2}
\langle a_i, \dot{a}_j\rangle+\langle \dot{a}_i, a_j\rangle=0 \qquad (ij\in T).
\end{equation}
A simple inductive argument, starting from $i^*$, implies that (\ref{eq:4_2_1}), (\ref{eq:4_2_2}), and (\ref{eq:a_inf_euc}) hold
if and only if
\begin{equation}
\label{eq:4_2_3}
\dot{a}_j=0\qquad (j\in V_L).
\end{equation}
Since the combination of (\ref{eq:line_inf1})-(\ref{eq:a_inf}) with (\ref{eq:4_2_3}) is equivalent to the system (\ref{eq:line_inf1fixednormal})-(\ref{eq:line_inf2fixednormal}) for $(G,p,\ell)$, we conclude that the dimension of  the solution space of the  latter system is   equal to two. In other words, $(G,p,\ell)$ admits only trivial infinitesimal motions as a fixed-normal point-line framework and (a) holds.
\end{proof}

An example illustrating Theorem \ref{thm:fixednormal} is shown in Figure \ref{fig:fixednormal}.

\begin{figure}[htp]
\begin{center}
\begin{minipage}{1.2cm}\begin{center}
\begin{tikzpicture}[scale=0.7]
\tikzstyle{every node}=[circle, draw=black, very thick, fill=white, inner sep=0pt, minimum width=4pt];

\node [draw=white, fill=white,rectangle] (a) at (0.5,2.4) {$u_3$};
\node [draw=white, fill=white,rectangle] (a) at (0.05,1) {$u_2$};
\node [draw=white, fill=white,rectangle] (a) at (-0.8,2) {$u_1$};

\node [draw=white, fill=white,rectangle] (a) at (0,3.4) {$v_3$};
\node [draw=white, fill=white,rectangle] (a) at (1.8,1.2) {$v_2$};
\node [draw=white, fill=white,rectangle] (a) at (-1.8,1.2) {$v_1$};

\draw[very thick](0.3,-0.3)--(-3.3,3.3);
\draw[very thick](-0.3,-0.3)--(3.3,3.3);
\draw[very thick](-3.5,3.1)--(3.5,3.1);

\path (-0.6,1.7) node (u1) {};
\path (0.4,1.2) node (u2) {};
\path (0.1,2.2) node (u3) {};

\draw[very thick](u2)--(u3);
\draw[very thick](u1)--(u3);
\draw[very thick](u2)--(u1);

\draw[dashed](u3)--(0.1,3.3);
\draw[dashed](u2)--(0.8,0.8);
\draw[dashed](u1)--(-1.15,1.15);

\draw[very thick,gray,->](u3)--(-0.4,2.2);
\draw[very thick,gray,->](u2)--(0.65,1.45);
\draw[very thick,gray,->](u1)--(-0.6,1.1);
\draw[very thick,gray,->](-2.5,2.5)--(-3,2);
\draw[very thick,gray,->](-0.5,0.5)--(-1,0);

%\node [draw=white, fill=white,rectangle] (a) at (0,-1.1) {(a)};

\end{tikzpicture}
\\\vspace{0.6cm} (a) \end{center}\end{minipage}
\hspace{4cm}
\begin{minipage}{1.2cm}\begin{center}
\begin{tikzpicture}[scale=.9]
\tikzstyle{every node}=[circle, draw=black, very thick, fill=white, inner sep=0pt, minimum width=4pt];

\node [draw=white, fill=white,rectangle] (a) at (0,0.55) {$u_3$};
\node [draw=white, fill=white,rectangle] (a) at (1.2,-0.1) {$u_2$};
\node [draw=white, fill=white,rectangle] (a) at (-1.17,-0.1) {$u_1$};

\node [draw=white, fill=white,rectangle] (a) at (0,2.86) {$v_3$};
\node [draw=white, fill=white,rectangle] (a) at (1.2,2.15) {$v_2$};
\node [draw=white, fill=white,rectangle] (a) at (-1.17,2.15) {$v_1$};

\path (-0.8,0) node (u1) {};
\path (0.8,0) node (u2) {};
\path (0,1) node (u3) {};

% \filldraw[draw=black] (-0.8,2) circle (3pt);
%\filldraw[draw=black] (0.8,2) circle (3pt);
%\filldraw[draw=black] (0,2.5) circle (3pt);

\filldraw[draw=black](-0.8,1.85)-- (-0.65,2)-- (-0.8,2.15)-- (-0.95,2)--(-0.8,1.85);
\filldraw[draw=black](0.8,1.85)-- (0.95,2)-- (0.8,2.15)-- (0.65,2)--(0.8,1.85);
\filldraw[draw=black](0,2.35)-- (0.15,2.5)-- (0,2.65)-- (-0.15,2.5)--(0,2.35);

\draw[very thick](u2)--(u3);
\draw[very thick](u1)--(u3);
\draw[very thick](u2)--(u1);

\draw[very thick](u1)--(-0.8,2);
\draw[very thick](u2)--(0.8,2);
\draw[very thick](u3)--(0,2.5);

%\node [draw=white, fill=white,rectangle] (a) at (0,-1.1) {(b)};

\end{tikzpicture}
\\\vspace{0.45cm} (b) \end{center}\end{minipage}
\hspace{2cm}
\begin{minipage}{1.2cm}\begin{center}
{\includegraphics[scale=.28]{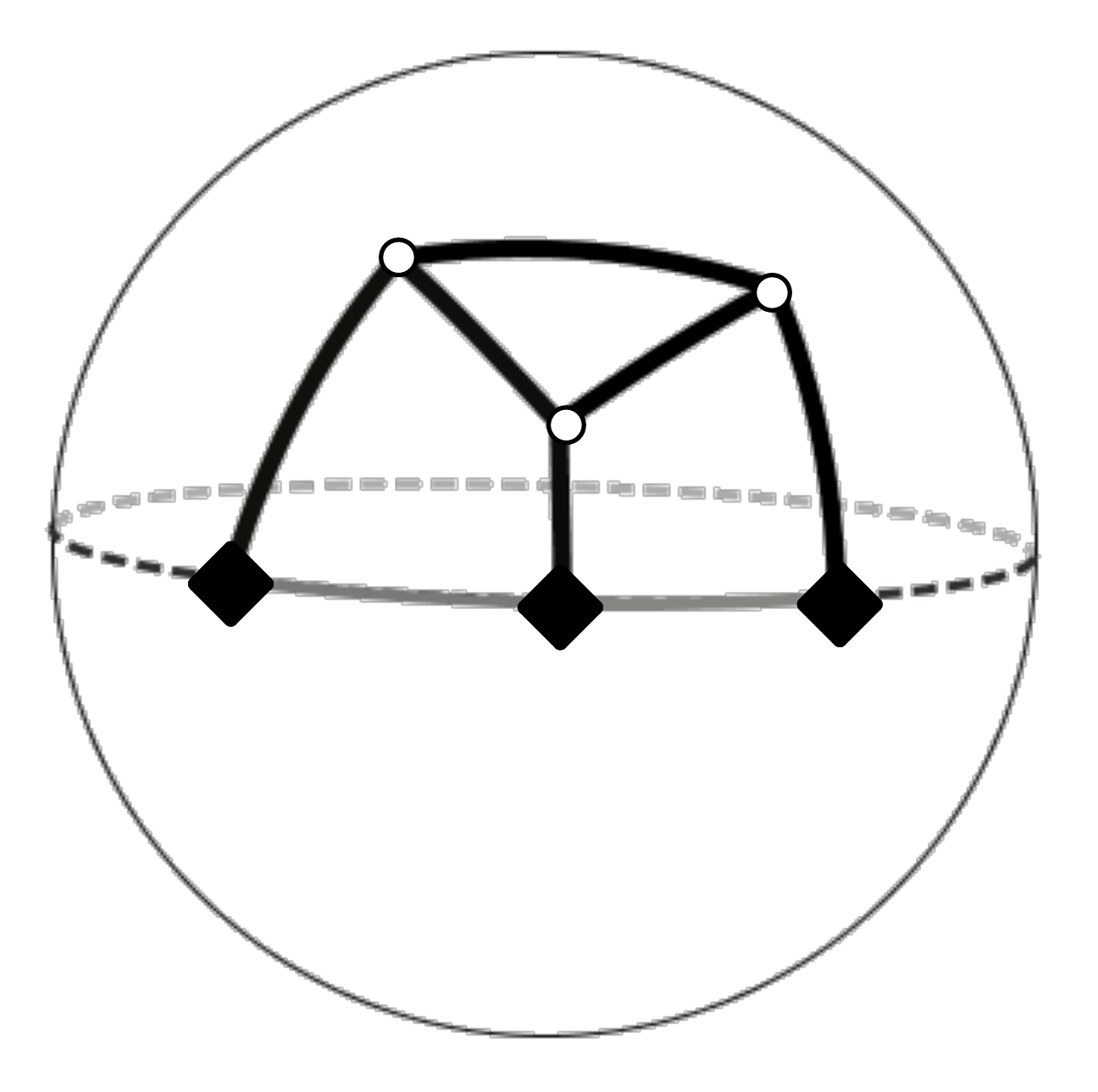}}
\\\vspace{-0.3cm} (c) \end{center}\end{minipage}
\hspace{3cm}
\begin{minipage}{1.2cm}\begin{center}
{\includegraphics[scale=.43]{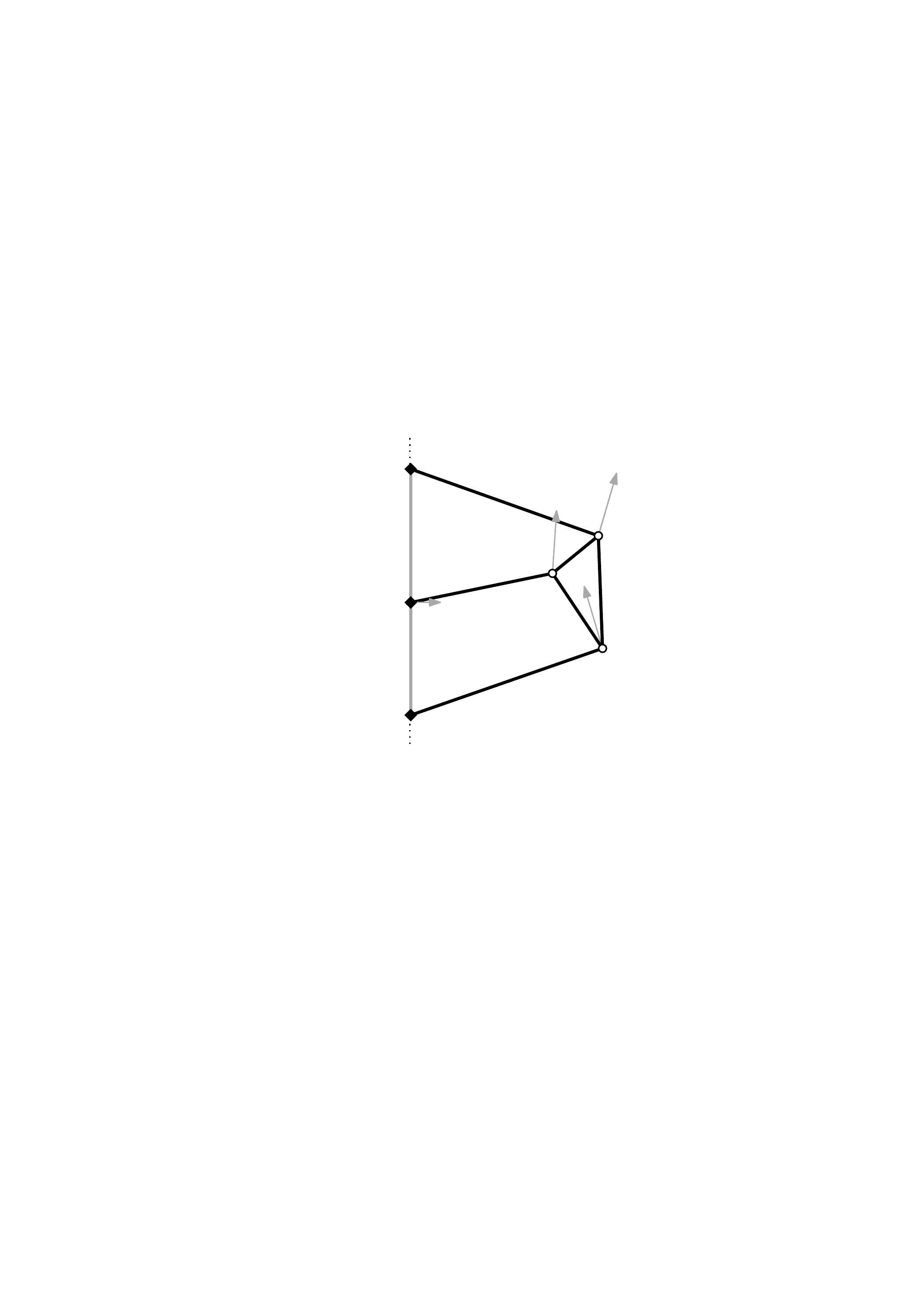}}
\\\vspace{0.3cm} (d) \end{center}\end{minipage}
%\begin{minipage}{0.4\textwidth}
%\centering
%{\includegraphics[scale=.34]{FixedNormal1.pdf}}
%\par
%(c)
%\end{minipage}
%\begin{minipage}{0.4\textwidth}
%\centering
%{\includegraphics[scale=.34]{FixedNormalProj.pdf}}
%\par
%\vspace{0.3cm}
%(d)
%\end{minipage}
\caption{An infinitesimally flexible fixed-normal  point-line framework (a) and   its associated point-line graph (b).
The corresponding framework on the sphere (c) and its projection to the plane (d) in which the line-vertics are collinear and constraints that the lines have fixed normals are modeled by adding a tree of grey edges between the three line-vertices. The point-line framework is fixed-normal flexible since it has only six constraints and $2|V_P|+|V_L|-2=7$.}
\label{fig:fixednormal}
\end{center}
\end{figure}

\iffalse
Figure~\ref{fig:fixednormal}(a)(b) shows a fixed-normal point-line framework and its associated point-line graph. Since $|V_P|=|V_L|=3$ and $|E|=6$, we have $|E|=6<7= 2|V_P|+|V_L|-2$, and hence this framework has one degree of freedom.

By the rigidity transfer between  spherical space and Euclidean space (explained in Section~2) we have the following.
\begin{theorem}
Let $G=(V_P\cup V_L,E)$ be a point-line graph with $|V_P|\geq1$ and $|V_L|\geq2$ and $T$ be a spanning tree on $V_L$.
Then $G$ can be realised as a  point-line  framework in $\mathbb{R}^2$ which is infinitesimally fixed-normal rigid if and only if $G+T$ can be realized as an infinitesimally rigid bar-joint framework in $\mathbb{R}^2$ such that the points assigned to $V_L$ lie on a line.
\end{theorem}
Figure~\ref{fig:fixednormal}(c)(d) show bar-joint frameworks on the sphere and the plane obtained from point-line framework in (a) by the transfer given in Section~2.
\fi
%An important question is to derive a characterization of infinitesimal fixed-normal rigidity without a generic assumption on normals.
%Such a question was answered for fixed-line rigidity in Theorem~\ref{thm:fixedline},
%but it seems  challenging for fixed-normal rigidity  in view of Theorem~\ref{thm:fixed_normal}.
%Interestingly  we solve the corresponding problem for fixed-intercept rigidity in the next section.

%%%%%%%%%%%%%%%%%%%%%%%%%%%%%%%%%%%%%%%%%%
\subsection{Fixed-intercept rigidity}\label{subsec:intercept}

We now consider point-line frameworks in which each line is allowed to rotate about some fixed point but cannot translate. Such a framework will have at most one trivial motion, and this will exist only when each of the lines are allowed to rotate about the same point. We will focus on the special case when all of the lines are concurrent and are allowed to rotate about their common point of intersection.
We will refer to such a point-line framework as a {\em line-concurrent framework}.
%We will see that characterising  generic rigidity in even this special case is nontrivial.

Given a line-concurrent  framework $(G,p,\ell)$, we may always assume that the common intersection point of the lines is the origin, i.e., $r_j=0$ for all $j\in V_L$, and hence the fixed-intercept constraint implies that $\dot{r}_j=0$ for all $j\in V_L$. Substituting $\dot{r}_j=0$ into (\ref{eq:line_inf1_euc})-(\ref{eq:a_inf_euc}), we deduce that the infinitesimal motions are determined by the following system:
\begin{align}
\langle p_i - p_j, \dot{p}_i-\dot{p}_j\rangle&=0 && (ij\in E_{PP}) \label{eq:intercept_sys1} \\
\langle p_i, \dot{a}_j\rangle+\langle \dot{p}_i, a_j\rangle&=0 && (ij\in E_{PL}) \\
\langle a_i, \dot{a}_j\rangle+\langle \dot{a}_i, a_j\rangle&=0 && (ij\in E_{LL}) \\
\langle a_i, \dot{a}_i\rangle&=0 && (i\in V_L). \label{eq:intercept_sys2}
\end{align}
We say that $(G,p,\ell)$ is {\em infinitesimally fixed-intercept rigid} if the above system admits only the trivial infinitesimal motion.

Our theorem gives a characterization of infinitesimal fixed-intercept rigidity even in the case when the normals of the lines are specified as input without assuming genericity.
(We will see below that allowing arbitrary normals gives potential applications to engineering.)

\begin{theorem}
\label{thm:fixed_intercept}
Let $G=(V_P\cup V_L, E)$ be a point-line graph with $|V_L|\geq 2$ and
let $a_i\in \mathbb{S}^1$ for each $i\in V_L$.
Suppose that each line has a distinct normal.
Then $G$ can be realised as a minimally infinitesimally fixed-intercept rigid line-concurrent framework such that each
$i\in V_L$ is realised as the line with normal $a_i$ if and only if $|E|=2|V_P|+|V_L|-1$
and
\[
|F|\leq 2\nu_{V_P}(F)+\nu_{V_L}(F)-3+\min\{2,|V_L(F)|\}
\]
for all nonempty $F\subseteq E$.
\end{theorem}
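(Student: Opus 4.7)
The plan is to reduce Theorem~\ref{thm:fixed_intercept} to Theorem~\ref{thm:barjointcol} via an auxiliary-edges construction, in the spirit of the proof of Theorem~\ref{thm:fixednormal}. Introduce a new point-vertex $v_0$ representing the common intersection of the concurrent lines and define
\[
G^{*} = \bigl(V_P\cup\{v_0\}\cup V_L,\ E\cup\{v_0 j:j\in V_L\}\bigr).
\]
Any fixed-intercept line-concurrent realisation $(G,p,\ell)$ with the lines meeting at the origin extends to a point-line realisation $(G^{*},p^{*},\ell)$ by placing $v_0$ at the origin (so $p^{*}_{v_0}=0$ and $r_j=0$ for all $j$); the new edges $v_0 j$ are then genuine point-line incidences.

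The first step is to show that $(G,p,\ell)$ is minimally infinitesimally fixed-intercept rigid if and only if $(G^{*},p^{*},\ell)$ is minimally infinitesimally rigid as an ordinary point-line framework. The $v_0 j$ incidence yields $\dot r_j=-\langle\dot p^{*}_{v_0},a_j\rangle$; substituting this into the $E_{PL}$ equations of $G$ and performing the change of variable $\tilde p_i=\dot p^{*}_i-\dot p^{*}_{v_0}$ converts the point-line system on $G^{*}$ into precisely the fixed-intercept system (\ref{eq:intercept_sys1})--(\ref{eq:intercept_sys2}) for $(\tilde p,\dot a)$ on $G$. This gives a surjection $M^{*}\to M$ between the respective motion spaces whose kernel is the $2$-dimensional family of translations parametrised by $\dot p^{*}_{v_0}\in\mathbb{R}^2$, so $\dim M^{*}=\dim M+2$; the requirements $\dim M^{*}=3$ and $\dim M=1$ for minimal rigidity are therefore equivalent.

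The second step applies Corollary~\ref{cor:regulartransfer} and Theorem~\ref{thm:barjointcol} with $X=V_L$ to $G^{*}$: realisability of $(G^{*},p^{*},\ell)$ as minimally infinitesimally rigid is equivalent to $|E^{*}|=2|V^{*}|-3$ (which reduces to $|E|=2|V_P|+|V_L|-1$) together with the partition-sparsity condition of Theorem~\ref{thm:barjointcol}(c) on $E^{*}$. The final step is to translate this partition condition on $G^{*}$ into the single-set count on $G$ given in Theorem~\ref{thm:fixed_intercept}. For the necessity direction, given $F\subseteq E$, take $A=F\cup\{v_0 j:j\in V_L(F)\}\subseteq E^{*}$ and use the trivial single-block partition $\{A\}$ when $|V_L(F)|\neq 1$ and the two-block partition $\{F,\{v_0 j\}\}$ when $|V_L(F)|=1$. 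Using $\nu_{V_P^{*}}(A)=\nu_{V_P}(F)+[V_L(F)\neq\emptyset]$, a direct calculation shows that each of these partitions yields exactly $|F|\leq 2\nu_{V_P}(F)+\nu_{V_L}(F)-3+\min\{2,|V_L(F)|\}$. For sufficiency, given arbitrary $A\subseteq E^{*}$ with arbitrary partition $\{A_i\}$, decompose $A_i=F_i\sqcup\{v_0 j:j\in S_i\}$, apply the count on $G$ to each $F_i$, and combine with the $v_0$-edge contributions to recover the required inequality for $G^{*}$.

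The main obstacle will be the sufficiency side of the final step: the bookkeeping is delicate because a vertex $j\in V_L$ may contribute to $\nu_{V_L}(A_i)$ both as an endpoint of an $F_i$-edge and via some $v_0 j\in S_{i'}$ in a different block, and the $[S_i\neq\emptyset]$ correction to $\nu_{V_P^{*}}(A_i)$ must be distributed efficiently across the partition. A natural tool is the submodularity of $F\mapsto 2\nu_{V_P}(F)+\nu_{V_L}(F)$, combined with a careful case analysis. A secondary subtlety, already present in Theorems~\ref{thm:fixedline} and~\ref{thm:fixednormal}, is that Theorem~\ref{thm:barjointcol} asserts only existence of \emph{some} rigid realisation while Theorem~\ref{thm:fixed_intercept} prescribes specific distinct normals; since distinct normals correspond to distinct positions on the common line in the bar-joint framework of Theorem~\ref{thm:transfer2}, a generic choice of the remaining coordinates suffices to produce a rigid realisation once the combinatorial count holds.
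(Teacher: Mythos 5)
There is a genuine gap, and it sits exactly where you flagged a ``secondary subtlety.'' Your step~1 (the motion-space correspondence between the fixed-intercept system on $G$ and the ordinary point-line system on $G^{*}$, with a $2$-dimensional kernel of translations) is sound, and your necessity direction goes through. But for sufficiency your argument needs the \emph{specific} realisations of $G^{*}$ in which the line-vertices carry the prescribed normals $a_i$ and all lines pass through $p^{*}_{v_0}$ to include a rigid one. Theorem~\ref{thm:barjointcol} only certifies that \emph{some} realisation of $G^{*}$ is rigid; the realisations you need form a proper subvariety of the realisation space (the $a_i$ are arbitrary distinct unit vectors, possibly algebraically dependent, and are not at your disposal), and maximality of the rank of the rigidity matrix on the whole space does not imply maximality on that subvariety. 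Your closing claim that ``a generic choice of the remaining coordinates suffices'' is precisely the assertion that needs proof, and it is the hard content of the theorem. The paper is explicit about this: it states that the generic version of Theorem~\ref{thm:fixed_intercept} can be deduced from Theorem~\ref{thm:barjointcol} (this is essentially your construction, which reappears in Section~\ref{subsec:mixed} for generic mixed constraints), ``however there seems to be no such reduction in the nongeneric case so we provide a direct proof.''

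The paper's actual route is quite different: it first proves the stronger Theorem~\ref{thm:fixed_intercept0}, valid for arbitrary (even coincident) normals, by a direct analysis of the rigidity matrix --- reducing to the matrix $R'(G,p,\ell)$, assigning subspaces $U_e$ built from the normals, and invoking Lemma~\ref{lem:intercept1} together with Lov\'asz's Dilworth-truncation lemma (Lemma~\ref{lem:dilworth}) to \emph{construct} a point configuration $p$ attaining the maximum possible rank for the given normals. Theorem~\ref{thm:fixed_intercept} is then obtained in Section~\ref{subsec:proof2} by a purely combinatorial argument showing that, when the normals are distinct, the count of Theorem~\ref{thm:fixed_intercept0} is equivalent to the simpler count $|F|\leq 2\nu_{V_P}(F)+\nu_{V_L}(F)-3+\min\{2,|V_L(F)|\}$. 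If you want to salvage your reduction, you would have to replace the appeal to Theorem~\ref{thm:barjointcol} by an argument that the rank is maximised on the constrained subvariety --- which is essentially what the Dilworth-truncation proof does --- so the reduction buys you nothing in the nongeneric setting. A second, lesser, issue is that the combinatorial equivalence between the partition condition on $G^{*}$ and the single-set count on $G$ is left as an acknowledged ``main obstacle'' rather than carried out; a complete proof would need that bookkeeping done (compare the analogous computation in the proof of Theorem~\ref{thm:fixednormal}), but even with it in place the geometric gap above remains.
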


We will in fact prove a  stronger statement, in which lines are allowed to have the same normal (as in the setting of Theorem~\ref{thm:fixedline}).
To state the result we need the following notation.
For a point-line graph $G=(V_P\cup V_L, E)$,
let $G^P$ be the graph on $V_P$ obtained from $G$ by
removing $V_L$ and regarding each edge $ij$ in $E_{PL}$ with $i\in V_P$ as a loop at $i$.
Similarly, let $G^L$ be the graph on $V_L$ obtained from $G$ by
removing $V_P$ and regarding each edge $ij$ in $E_{PL}$ with $j\in V_L$ as a loop at $j$.
For an edge set $F$ of $G$, let $G[F]$ be the subgraph of $G$ induced by $F$.
%and let $G^P[F]=(G[F])^P$ and $G^L[F]=(G[F])^L$.
Also for a graph $H$, let $C(H)$ be the set of connected components in $H$.

\begin{theorem}
\label{thm:fixed_intercept0}
Let $G=(V_P\cup V_L, E)$ be a point-line graph with $|V_L|\geq 2$ and
let $a_i\in \mathbb{S}^1$ for each $i\in V_L$.
Then $G$ can be realised as a minimally infinitesimally fixed-intercept rigid line-concurrent framework such that each
$i\in V_L$ is realised as the line with normal $a_i$ if and only if
\begin{itemize}
\item $|E|=2|V_P|+|V_L|-1$,
\item $a_i\neq a_j$ for each $ij\in E_{LL}$, and
\item $|F|\leq 2\nu_{V_P}(F)+\nu_{V_L}(F)-1-
\sum_{H\in C((G[F])^P)}(2-\dim \langle a_j: ij\in F\cap E_{PL}, i\in V(H)\rangle)$
for all nonempty $F\subseteq E$.
\end{itemize}
\end{theorem}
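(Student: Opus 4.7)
The proof mirrors Theorems~4.2 and~4.6: necessity by direct analysis of the infinitesimal motion space, sufficiency by reduction to a known bar-joint rigidity result via Theorem~\ref{thm:transfer2}. After using $\langle a_j,\dot a_j\rangle=0$ to write $\dot a_j=\lambda_j a_j^{\bot}$, the rigidity matrix for (\ref{eq:intercept_sys1})--(\ref{eq:intercept_sys2}) has $|E|$ rows and $2|V_P|+|V_L|$ columns (two per $\dot p_i$, one per $\lambda_j$). The global rotation about the origin, $\dot p_i=p_i^{\bot}$ and $\lambda_j=1$, spans the one-dimensional space of trivial motions, justifying $|E|=2|V_P|+|V_L|-1$ in the minimally rigid case. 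Each LL equation reduces to $(\lambda_j-\lambda_i)\langle a_i,a_j^{\bot}\rangle=0$ and thus vanishes whenever $a_i=\pm a_j$, so minimal rigidity forces $a_i\neq a_j$ (and $a_i\neq -a_j$, which is automatic for distinct lines) on each LL edge.

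For the sparsity inequality, fix a nonempty $F\subseteq E$ and exhibit an explicit family of infinitesimal motions of the subframework $(G[F],p|_{V_P(F)},\ell|_{V_L(F)})$: (i) the global rotation, contributing one motion; and (ii) for each connected component $H$ of $(G[F])^P$, the translation $\dot p_i=v_H$ for $i\in V(H)$, $\dot p_i=0$ elsewhere, and $\lambda_j=0$ for every $j\in V_L(F)$, where $v_H$ ranges over the orthogonal complement of $\langle a_j:ij\in F\cap E_{PL},\,i\in V(H)\rangle$, giving $2-\dim\langle a_j:ij\in F\cap E_{PL},\,i\in V(H)\rangle$ independent motions. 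A direct check against every PP, PL, and LL equation verifies these are genuine motions (no PP edge crosses between components of $(G[F])^P$, and $\lambda\equiv 0$ kills every LL equation), and they are linearly independent because the rotation has $\lambda\equiv 1$ while the component translations have $\lambda\equiv 0$ and are supported on disjoint point-sets. Hence the rank of the submatrix indexed by $F$ is at most $2\nu_{V_P}(F)+\nu_{V_L}(F)-1-\sum_{H\in C((G[F])^P)}\bigl(2-\dim\langle a_j:ij\in F\cap E_{PL},\,i\in V(H)\rangle\bigr)$, and the required upper bound on $|F|$ follows from the row-independence forced by minimal rigidity.

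For sufficiency, Theorem~\ref{thm:transfer2} converts $(G,p,\ell)$ into a bar-joint framework on the same graph $G$ in which the $V_L$-vertices are constrained to a common line (arising from the equator of the sphere) and allowed to slide along it; the trivial global rotation becomes translation along this slider line, and each $\lambda_j$ becomes the along-line speed of the corresponding slider vertex. Under this correspondence the count condition of the theorem is precisely the Laman-type sparsity condition for slider bar-joint rigidity in $\mathbb{R}^2$, where the term $2-\dim\langle a_j\rangle$ records the number of translational directions of a point-component $H$ that are not constrained by its incident slider bars. Sufficiency is then established by invoking a characterization of slider bar-joint rigidity valid for non-generic slider positions, analogous to the use of Servatius--Shai--Whiteley \cite[Theorem~4]{ShaiServWh} in Section~\ref{subsec:pin} (see also \cite[Theorem~7.5]{KatTan}). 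The principal obstacle is that the normals $a_j$ are prescribed and possibly non-generic, so the generic point-line characterization of Jackson and Owen \cite{JO} does not apply directly; this is handled by a Henneberg-style inductive construction in which each reduction move preserves both the count condition and the existence of a realization with the prescribed normals, with the component decomposition of $(G[F])^P$ serving as the bookkeeping device for which translational directions survive at each step.
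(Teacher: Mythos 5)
Your necessity argument is essentially sound: reducing to the $|E|\times(2|V_P|+|V_L|)$ matrix via $\dot a_j=\lambda_j a_j^{\bot}$, observing that an LL row degenerates to $(\lambda_j-\lambda_i)\langle a_i,a_j^{\bot}\rangle=0$ when $a_i=a_j$, and exhibiting the rotation together with the component-wise translations $v_H\in\langle a_j: ij\in F\cap E_{PL}, i\in V(H)\rangle^{\bot}$ as independent motions of the subframework on $F$ does give the count inequality. This is a different packaging from the paper, which obtains both directions at once from an exact rank formula, but it is correct.

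The sufficiency direction, however, has a genuine gap, and it is precisely the hard half of the theorem. You propose to transfer to a bar-joint framework with slider joints and then ``invoke a characterization of slider bar-joint rigidity valid for non-generic slider positions.'' No such prior characterization exists: the paper states explicitly that the earlier slider results of Streinu--Theran and Katoh--Tanigawa assume genericity of the slider coordinates, and that Theorem~\ref{thm:horizontal} (the non-generic slider statement you would need) is itself \emph{deduced from} Theorem~\ref{thm:fixed_intercept0} via the transfer --- so your argument is circular. The analogy with Section~\ref{subsec:pin} does not help either: Servatius--Shai--Whiteley handle non-generic \emph{pinned} vertices, not sliders. The authors also remark just before Section~\ref{subsec:matroid} that ``there seems to be no such reduction in the nongeneric case so we provide a direct proof.'' Your fallback, a ``Henneberg-style inductive construction'' preserving the count and the prescribed normals, is not carried out and would constitute the entire content of the proof; it is far from clear that the count condition (with its sum over components of $(G[F])^P$ weighted by $\dim\langle a_j\rangle$) admits such an inductive decomposition. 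The paper instead proves sufficiency directly: it assigns to each edge a subspace $U_e=U_e^P\oplus U_e^L$ built from the count matroid on $G^P$ (Lemma~\ref{lem:intercept1}) and the bicircular matroid on $G^L$, applies Lov\'asz's geometric Dilworth truncation (Lemma~\ref{lem:dilworth}) to produce a transversal hyperplane whose normal $s$ is converted into a point configuration $p(i)=s(i)^{\bot}$ realizing $\rank R'(G,p,\ell)=\min\sum_i f(F_i)$, and then invokes Edmonds' theorem on matroids induced by submodular functions to translate this into the stated count. None of this machinery appears in your proposal, and without it (or a worked-out induction) the sufficiency claim is unsupported.
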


Consider the point-line graph $G$ shown in Figure~\ref{fig:fixed_intercept}(a). Two different realisations as a line-concurrent point-line framework are shown in (d) and (e). The framework in (d) has two lines with the same normal. We can use Theorem~\ref{thm:fixed_intercept0} to show that it is not infinitesimally fixed intercept rigid by taking $F$ to be the edge-set of sugraph of $G$ shown in (b). Then $G[F]^P$ is as shown in (c) and the right hand side of the inequality of Theorem~\ref{thm:fixed_intercept0} is $2\cdot 4+2-1-2=7$, which is greater than $|F|=8$. On the other hand the realisaton shown in (e) is infinitesimally fixed intercept rigid. In particular if we evaluate the right hand side of the inequality of Theorem~\ref{thm:fixed_intercept} for $F$, we obtain $2\cdot 4+2-1=9$ so the inequality holds.

\begin{figure}[htp]
\centering
\begin{minipage}{0.2\textwidth}
\centering
\includegraphics[scale=1]{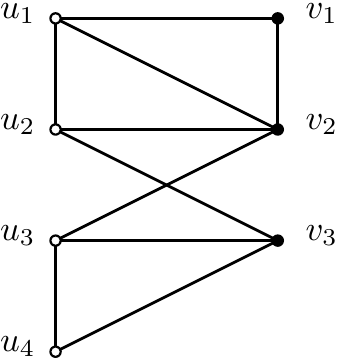}
\par
(a)
\vspace{.7cm}
\end{minipage}
\hspace{1.4cm}
\begin{minipage}{0.2\textwidth}
\centering
\includegraphics[scale=1]{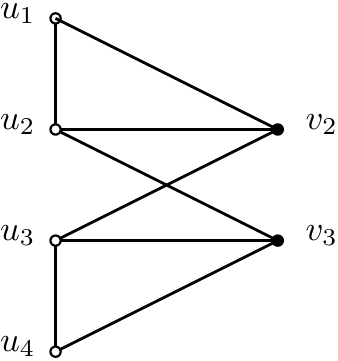}
\par
(b)
\vspace{.7cm}
\end{minipage}
\hspace{1cm}
\begin{minipage}{0.2\textwidth}
\centering
\includegraphics[scale=1]{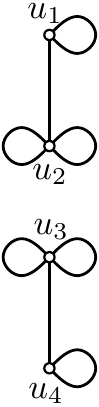}
\par
(c)
\vspace{.7cm}
\end{minipage}
\vspace{.7cm}
\begin{minipage}{0.4\textwidth}
\centering
\includegraphics[scale=0.9]{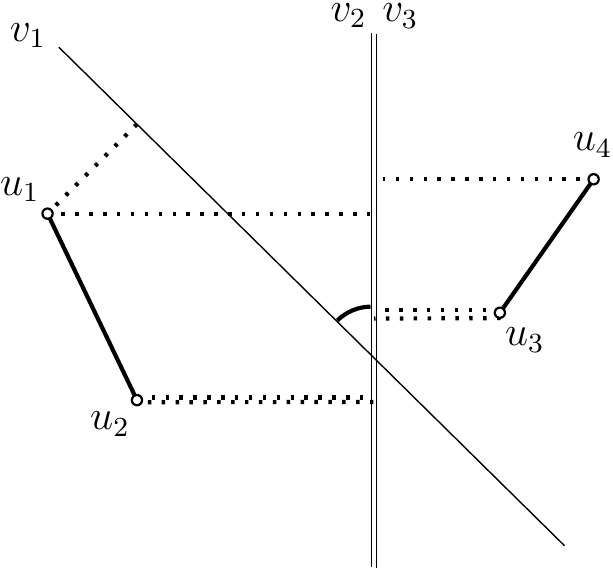}
\par
(d)
\end{minipage}
\begin{minipage}{0.4\textwidth}
\centering
\includegraphics[scale=0.9]{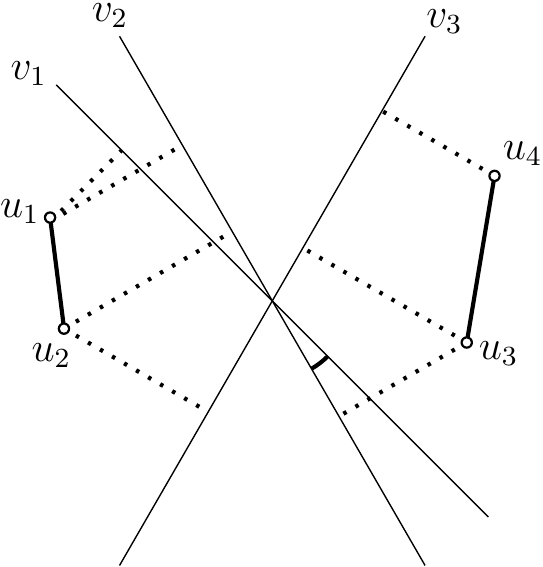}
\par
(e)
\end{minipage}

\begin{minipage}{0.4\textwidth}
\centering
\includegraphics[scale=0.7]{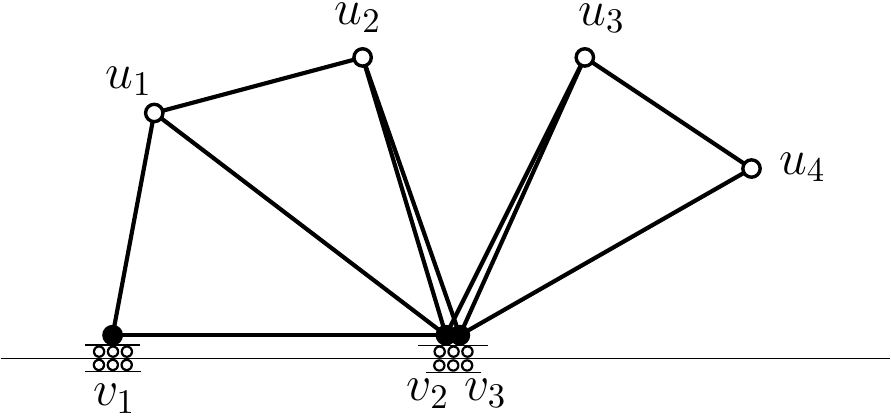}
\par
(f)
\end{minipage}
\begin{minipage}{0.4\textwidth}
\centering
\includegraphics[scale=0.7]{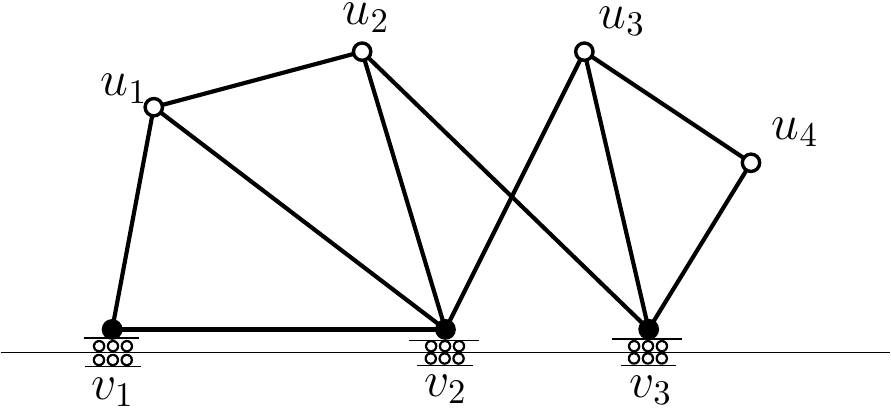}
\par
(g)
\end{minipage}
\caption{(a) $G=(V_P\cup V_L, E)$, where the left side  is $V_P$ and the right side is $V_L$.
(b) an edge set $F$ violating the count in Theorem~\ref{thm:fixed_intercept0} when  two normals  coincide as in (d). (c) $G[F]^P$.
(d) A line-concurrent realization such that the two lines $u_1, u_2$ have the same normal.
(e) A generic line-concurrent realization.
(f)(g) Bar-joint frameworks with horizontal slider joints corresponding to (d)(e).}
\label{fig:fixed_intercept}
\end{figure}

%In contrast to the fixed-normal case, there seems to be no simple reduction of Theorem~\ref{thm:fixed_intercept} to Theorem~\ref{thm:barjointcol}, so we provide a direct proof.
We will see in the next section that the generic version of  Theorem~\ref{thm:fixed_intercept} can be deduced from Theorem~\ref{thm:barjointcol}. However there seems to be no such reduction in the nongeneric case so we provide a direct proof.
We  first give several tools from matroid theory in Section~\ref{subsec:matroid} and
delay the proof of Theorem~\ref{thm:fixed_intercept0} to Section~\ref{subsec:proof1}.
Finally in Section~\ref{subsec:proof2} we show that, if each line has a distinct normal,
 the combinatorial condition of Theorem~\ref{thm:fixed_intercept0} is equivalent to that of Theorem~\ref{thm:fixed_intercept}.
 Although the combinatorial condition in Theorem~\ref{thm:fixed_intercept} is much simpler than that in Theorem~\ref{thm:fixed_intercept0},
 currently we have no direct proof of Theorem~\ref{thm:fixed_intercept}.

Before proceeding to the proof, we describe a consequence of Theorem~\ref{thm:fixed_intercept0} for bar-joint frameworks.
Consider, again, the transformation given in  Section~2, which converts a point-line framework $(G,p,\ell)$ to a bar-joint framework $(G,q)$.
Note that a line-concurrent point-line framework $(G,p,\ell)$ will be mapped to a bar-joint framework $(G,q)$ such that all the  points in $q(V_L)$ lie on a line, say a horizontal line.
If the rotation on the sphere is done such that the north pole is mapped to a point on the equator (so that the north pole is finally mapped to a point at infinity after the projection to the plane), then in the isomorphism between the spaces of infinitesimal motions of $(G,p,\ell)$ and $(G,q)$, we have that
$\dot{r}_j=0$ if and only if $\dot{q}_j$  is in the horizontal direction.
In other words each point $q(v)$ for $v\in V_L$ can only slide along the horizontal line.
Therefore,  the question about the fixed-intercept rigidity of $(G,p,\ell)$ can be rephrased as the rigidity question of bar-joint frameworks with horizontal slider joints on the ground. This transformation is illustrated in  
Figure~\ref{fig:fixed_intercept}(d),(e),(f),(g) and Figure~\ref{fig:sphereintercept}(a),(b).
%and give examples of bar-joint frameworks with horizontal sliders corresponding to the point-line frameworks in (d)(e).

More formally, a bar-joint framework with horizontal slider joints is a tuple $(G,X,p)$ of a graph $G$, $X\subseteq V(G)$, and $p:V\rightarrow \mathbb{R}^2$,
where $X$ will represent a set of slider joints.
An infinitesimal motion $\dot{p}$ of $(G,X,p)$ is an infinitesimal motion of $(G,p)$ with $\dot{p}(v) \cdot \begin{pmatrix} 0 \\ 1\end{pmatrix}=0$ for all $v\in X$,
and $(G, X, p)$ is said to be infinitesimally rigid if horizontal translations are the only possible infinitesimal motions of $(G, X, p)$.
By the rigidity transformation explained above, Theorem~\ref{thm:fixed_intercept0} can be restated  as follows.
\begin{theorem}
\label{thm:horizontal}
Let $G=(V_P\cup V_L, E)$ be a point-line graph with $|V_L|\geq 2$ and
let $x_i\in \mathbb{R}^1$ for each $i\in V_L$.
Then $G$ can be realised as a minimally infinitesimally rigid bar-joint framework  in $\mathbb{R}^2$ with $V_L$ as a set of horizontal slider joints such that the coordinate of $i\in V_L$ is $\begin{pmatrix} x_i \\ 0\end{pmatrix}$  if and only if
\begin{itemize}
\item $|E|=2|V_P|+|V_L|-1$,
\item $x_i\neq x_j$ for each $ij\in E_{LL}$, and
\item
$|F|\leq 2\nu_{V_P}(F)+\nu_{V_L}(F)-1-
\sum_{H\in C((G[F])^P)}\max\{0, 2-|\{x_j : ij\in F\cap E_{PL}, i\in V(H)\}|\} )$
for all nonempty $F\subseteq E$.
\end{itemize}
\end{theorem}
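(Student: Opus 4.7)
The plan is to derive Theorem~\ref{thm:horizontal} as a direct consequence of Theorem~\ref{thm:fixed_intercept0} via the rigidity-preserving transformation recalled in the paragraph preceding the statement. My approach has three steps, none of which should require any genuinely new ideas.

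First I would make the geometric correspondence explicit: a line-concurrent point-line framework $(G,p,\ell)$ whose lines pass through the origin is converted, by the composition $\phi^{-1}\circ\iota\circ\gamma\circ\phi$ of Section~\ref{subsec:ptln} (with the rotation $\gamma$ chosen so that the north pole lands on the equator), into a bar-joint framework $(G,q)$ in which all vertices of $V_L$ lie on a common horizontal line. Under this correspondence the constraint $\dot{r}_j=0$ is carried to the condition that $\dot{q}_j$ has no vertical component, which is exactly the horizontal slider constraint. Since the transformation is an isomorphism of spaces of infinitesimal motions, infinitesimal fixed-intercept rigidity of $(G,p,\ell)$ is equivalent to infinitesimal rigidity of $(G,q)$ with $V_L$ as a set of horizontal slider joints.

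Second I would check that the transformation induces a bijection between admissible normal configurations $(a_i)\in(\mathbb{S}^1)^{V_L}$ and admissible slider configurations $(x_i)\in\mathbb{R}^{V_L}$. The key point is that two normals $a_i,a_j$ produce the same slider position if and only if the corresponding lines through the origin coincide, i.e.\ if and only if $a_i=\pm a_j$. In particular the condition $x_i\neq x_j$ for $ij\in E_{LL}$ translates the LL-hypothesis of Theorem~\ref{thm:fixed_intercept0} into the requirement that the two lines incident to any LL-edge are distinct.

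Third I would translate the combinatorial count. For a nonempty $F\subseteq E$ and a connected component $H$ of $(G[F])^P$, put
\[
A_H:=\{a_j : ij\in F\cap E_{PL},\ i\in V(H)\},\qquad X_H:=\{x_j : ij\in F\cap E_{PL},\ i\in V(H)\}.
\]
A short case check ($|X_H|=0$, $|X_H|=1$, $|X_H|\geq 2$) combined with the observation that distinct slider positions correspond to pairs of normals $a_i,a_j$ with $a_i\neq\pm a_j$ (hence linearly independent in $\mathbb{R}^2$) gives $2-\dim\langle A_H\rangle=\max\{0,\,2-|X_H|\}$. Summing over components, the inequality of Theorem~\ref{thm:fixed_intercept0} becomes the inequality stated in Theorem~\ref{thm:horizontal} term by term.

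The main, though mild, technical hurdle will be making the bijection between normals and slider positions sufficiently explicit to ensure that every prescribed tuple $(x_i)_{i\in V_L}\in\mathbb{R}^{V_L}$ is realizable. This will require choosing the rotation $\gamma$ so that no $V_L$-vertex is sent to the ``point at infinity'' of the horizontal slider line, and checking that the reverse direction of the transformation indeed recovers a line-concurrent point-line framework whose line normals reproduce the prescribed data. Once this bookkeeping is in place, the theorem follows immediately from Theorem~\ref{thm:fixed_intercept0}.
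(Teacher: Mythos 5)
Your proposal is correct and follows essentially the same route as the paper: the paper obtains Theorem~\ref{thm:horizontal} precisely as a restatement of Theorem~\ref{thm:fixed_intercept0} via the Section~2 transformation, choosing the rotation so that the north pole lands on the equator, whence $\dot{r}_j=0$ becomes the horizontal slider condition and normals correspond (up to sign) to slider positions. Your translation of the count, using $2-\dim\langle a_j\rangle=\max\{0,2-|\{x_j\}|\}$, matches the intended correspondence.
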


Note that Theorem~\ref{thm:horizontal} has no restriction on the coordinates of the slider joints.
This is a much stronger statement than previous results~\cite{StreinuTheran,KatTan},  where a certain genericity is assumed for the coordinates of slider joints.
Such bar-joint frameworks with horizontal sliders frequently appear in the structural engineering literature, where sliders are often located on the horizontal ground.

%Figure~\ref{fig:fixed_intercept}(f)(g) give examples of bar-joint frameworks with horizontal sliders corresponding to the point-line frameworks in (d)(e).

\begin{figure}[h]
\centering
\begin{minipage}{0.4\textwidth}
\centering
\includegraphics[scale=0.37]{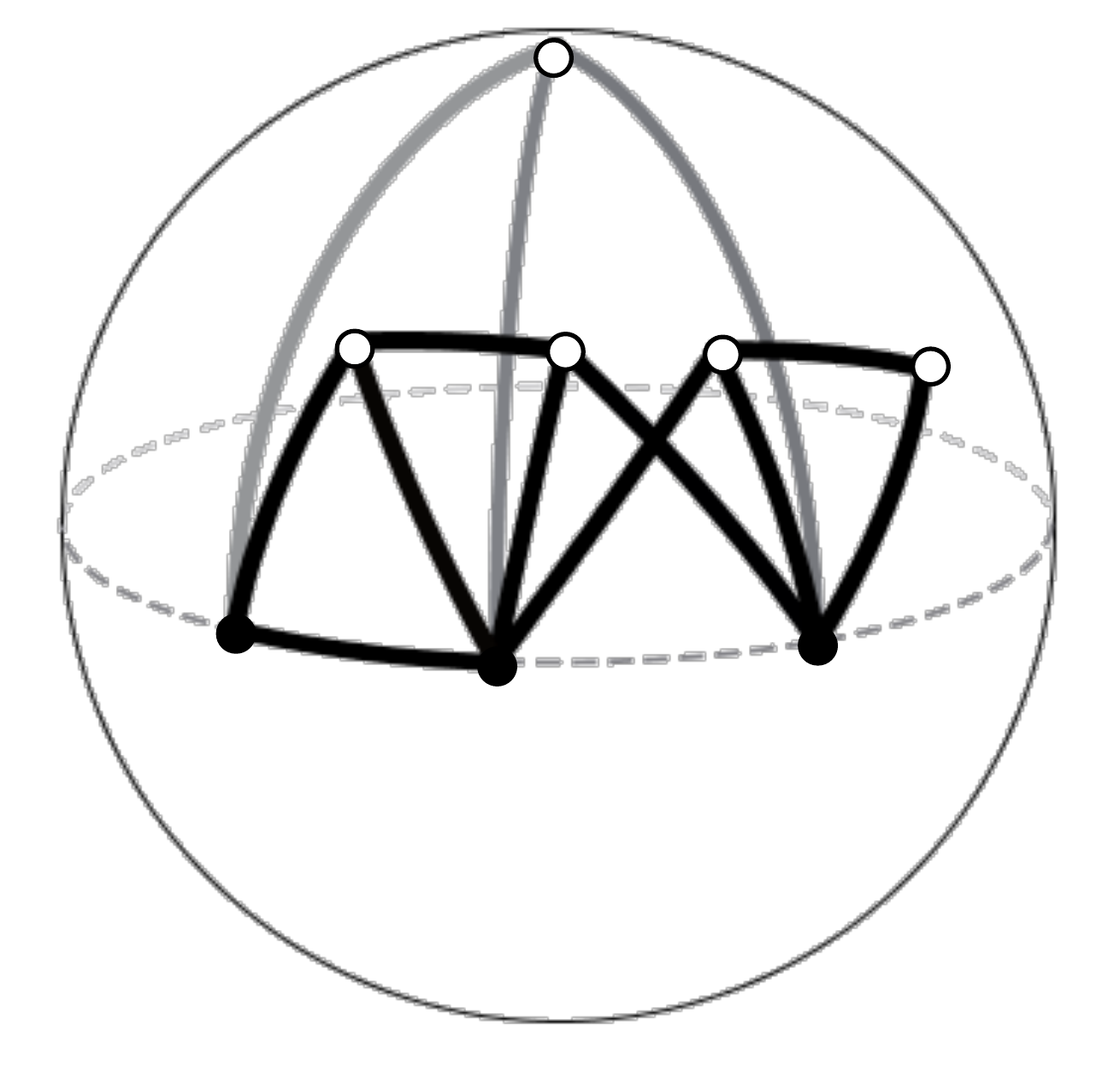}\quad\quad
\par
(a)
\end{minipage}
\begin{minipage}{0.4\textwidth}
\centering
\includegraphics[scale=0.37]{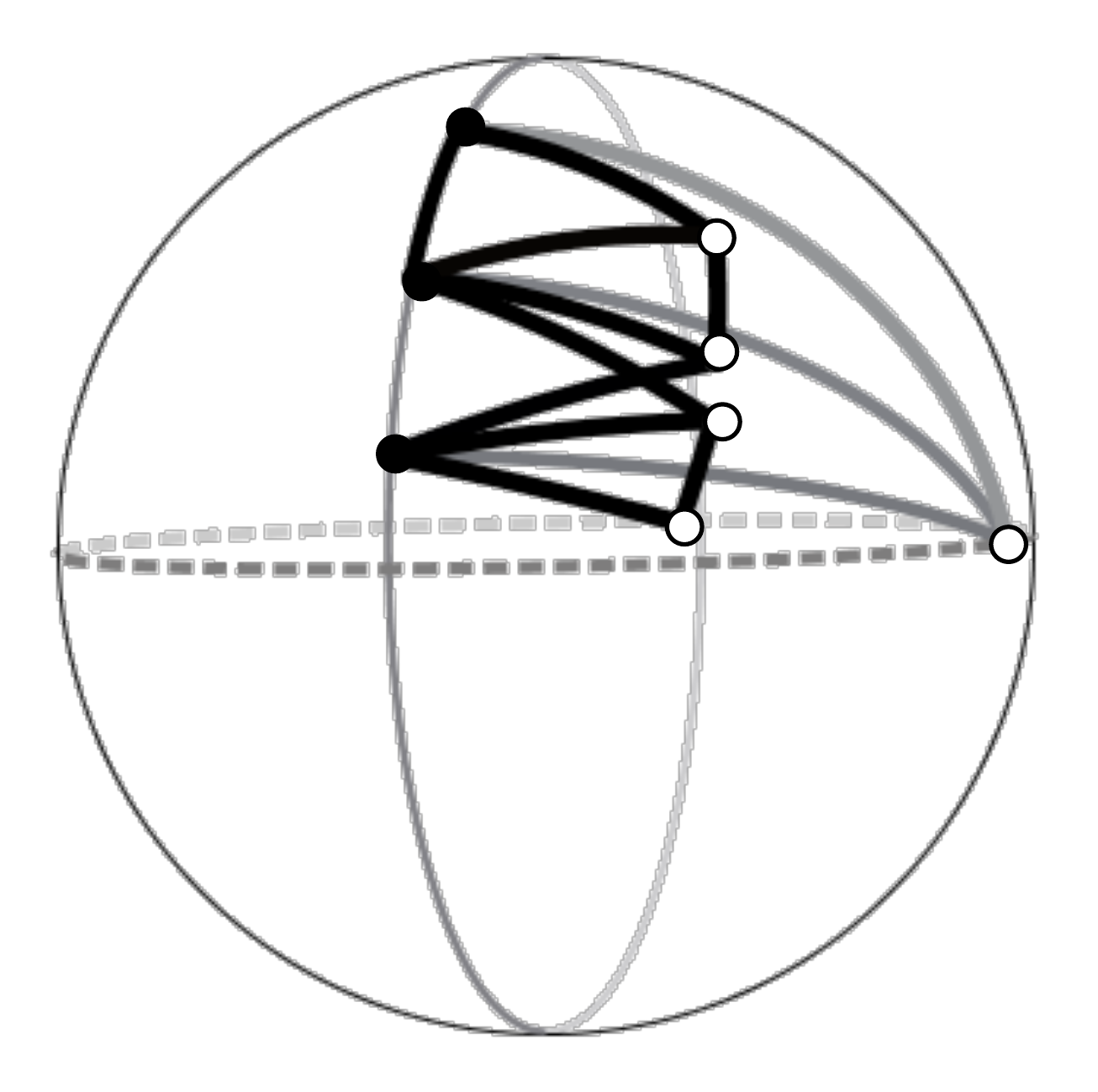}\quad\quad
\par
(b)
\end{minipage})
\caption{(a) shows the spherical framework  corresponding to the point-line framework in Figure 7(e), in which we have added a new point-vertex joined to the three line-vertices by grey edges to model the constraints that the lines must rotate about a fixed point.   We rotate this framework to move the line-vertices off the equator and the new point-vertex onto the equator to obtain the spherical framework in (b). This projects to the bar-joint framework in Figure 7(g). The three line-vertices project to three collinear joints and the new point-vertex and its incident grey edges correspond to the constraints that the collinear joints are forced to move along the line.
%illustrating  Theorem~\ref{thm:fixed_intercept0},~\ref{thm:horizontal}.
%The new point-vertex  on the equator and three grey edges in (b) becomes the pinned horizontal line with vertices forced to slide along the line. This Figure %also provides the graphical schematic behind the representation of fixed intercepts in Figure 9.
}
\label{fig:sphereintercept}
\end{figure}

%%%%%%%%%%%%%%%%%%%%%%
\subsubsection{Matroid preliminaries}
\label{subsec:matroid}
Let $G=(V,E)$ be a graph which may contain loops and let $d$ be a positive integer.
We assign a copy of $\mathbb{R}^d$ to each vertex and let $(\mathbb{R}^d)^V$ be the direct sum of those spaces over all vertices. For $x\in (\mathbb{R}^d)^V$, let $x(i)\in \mathbb{R}^d$ be the restriction of $x$ to the space assigned to $i\in V$.
Consider the incidence matrix of an  oriented $G$, that is, the $(|E| \times |V|)$-matrix in which the entries in row $e=ij$ with $i<j$ are $1$ in column $i$,
 $-1$ in column $j$ and $0$ elsewhere,
 and the entries in a row corresponding to a loop at $i$ are $1$ in column $i$ and $0$ elsewhere.
 This matrix gives a linear representation of a variant of the {\em cycle matroid} of $G$.
This matroid has rank equal to
 \[|V|-\sum_{H\in C(G)} \lambda(H),\]
 where $\lambda(H):=1$ if $H$ has no loop and  $\lambda(H)=0$ otherwise.
% \[
% c(H)=\begin{cases}
% 1 & (\text{if $H$ has no loop})\\
% 0 & (\text{otherwise})
% \end{cases}
% \]
 We can obtain a linear representation of this matroid by assigning a one-dimensional vector space
 $A_e$ to each $e\in E$, where
 \begin{align*}
 A_e&=
 \{x\in \mathbb{R}^V: x(i)+x(j)=0, x(k)=0\ \forall k\in V\setminus \{i,j\}\} && (\text{$e=ij$ is not a loop}) \\
 A_e&=\{x\in \mathbb{R}^V: x(k)=0\ \forall k\in V\setminus \{i\}\} && (\text{$e$ is a loop at $i$})
 \end{align*}
 with each edge $e\in E$.
 Then $\dim \langle A_e: e\in E\rangle=|V|-\sum_{H\in C(G)} \lambda(H)$.

 Next we take the direct sum of $d$ copies of $A_e$, which gives a $d$-dimensional vector space $A_e^d$ for each edge $e$:
 \begin{align*}
 A_e^d&=
 \{x\in (\mathbb{R}^d)^V: x(i)+x(j)=0, x(k)=0\ \forall k\in V\setminus \{i,j\}\} && (\text{$e=ij$ is not a loop}) \\
 A_e^d&=\{x\in (\mathbb{R}^d)^V: x(k)=0\ \forall k\in V\setminus \{i\}\} && (\text{$e$ is a loop at $i$}).
 \end{align*}
 Clearly
   \begin{equation}
   \label{eq:A_e1}
   \dim \langle A_e^d: e\in E\rangle=d|V|-\sum_{H\in C(G)} d\lambda(H)\,.
   \end{equation}

 We now establish a variant of equation (\ref{eq:A_e1}).
 Suppose that a $d$-dimensional vector $a_e$ is assigned to each loop $e$.
 We then assign a vector space $B_e$ to $e$ by putting
  \begin{align*}
 B_e&=A_e^d
&& (\text{$e=ij$ is not a loop}) \\
 B_e&=\{x\in (\mathbb{R}^d)^V: x(i)\in \langle a_e\rangle, x(k)=0\ \forall k\in V\setminus \{i\}\} && (\text{$e$ is a loop at $i$})
 \end{align*}
 where $\langle a_e\rangle$ denotes the span of $a_e$.
 Let $\Lo(H)$ be the set of loops in a graph $H$.
 \begin{lemma}
 \label{lem:intercept1}
   \begin{equation}
   \label{eq:A_e}
   \dim \langle B_e: e\in E\rangle=d|V|-\sum_{H\in C(G)} (d-\dim\langle a_e: e\in \Lo(H)\rangle).
   \end{equation}
   \end{lemma}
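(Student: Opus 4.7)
The plan is to compute $\dim \langle B_e : e \in E \rangle$ via its annihilator in the dual of the ambient space $(\mathbb{R}^d)^V$. Since any $B_e$ is supported on the vertices of one connected component of $G$, the spaces $\sum_{e \in E(H)} B_e$ attached to distinct components $H \in C(G)$ live on disjoint coordinates and are therefore linearly independent. Hence it suffices to establish, for each connected component $H$, the identity
\begin{equation*}
\dim \sum_{e \in E(H)} B_e \;=\; d|V(H)| - \bigl(d - \dim \langle a_e : e \in \Lo(H) \rangle\bigr),
\end{equation*}
after which summing over components gives the desired formula.

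To handle a single component $H$, I would identify $(\mathbb{R}^d)^{V(H)}$ with its own dual via the standard inner product and describe the annihilator $W_H$ of $\sum_{e \in E(H)} B_e$ explicitly. A vector $y \in (\mathbb{R}^d)^{V(H)}$ lies in $W_H$ iff $\langle y, x\rangle = 0$ for every $x$ in every $B_e$. For a non-loop edge $e = ij$, the defining elements of $B_e$ are $(0,\dots,v,\dots,-v,\dots,0)$ for $v \in \mathbb{R}^d$, which forces $y(i) = y(j)$; applied to all non-loop edges of $H$, this yields that $y$ is constant on $V(H)$, say with common value $y_H \in \mathbb{R}^d$. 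For a loop $e$ at $i \in V(H)$, the spanning element $(0,\dots,a_e,\dots,0)$ forces $\langle y_H, a_e\rangle = 0$. Thus $W_H$ is canonically isomorphic to $\langle a_e : e \in \Lo(H) \rangle^{\perp} \subseteq \mathbb{R}^d$.

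Taking dimensions gives $\dim W_H = d - \dim \langle a_e : e \in \Lo(H)\rangle$, and since $\dim \sum_{e \in E(H)} B_e = d|V(H)| - \dim W_H$, the per-component identity follows. Summing over $H \in C(G)$ yields equation~(\ref{eq:A_e}). As a sanity check, when no loop is present at $H$ the term $\dim \langle a_e : e \in \Lo(H)\rangle$ is $0$ and we recover the usual loss of $d$ per loopless component, exactly the case $\lambda(H)=1$ in equation~(\ref{eq:A_e1}); and a single loop at $H$ with $a_e\neq 0$ gives a loss of $d-1$, consistent with the added $1$-dimensional direction.

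I do not expect a real obstacle here: the argument is essentially a duality calculation, and the only point requiring care is the bookkeeping of loops versus non-loop edges when describing the annihilator, in particular allowing $\dim \langle a_e : e \in \Lo(H)\rangle$ to take any value in $\{0,1,\dots,d\}$ depending on how many loop-vectors are assigned and how they span. Once the annihilator description is in place, the result follows from a single application of the rank--nullity formula within each component.
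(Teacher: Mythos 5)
Your proof is correct and follows essentially the same route as the paper: both compute the orthogonal complement of $\langle B_e : e\in E\rangle$, observing that the non-loop edges force $y$ to be constant on each connected component and the loops then restrict that constant value to $\langle a_e : e\in \Lo(H)\rangle^{\perp}$. The only difference is organizational (you work component by component, the paper imposes the two families of conditions globally), which does not change the argument.
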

   \begin{proof}
   This is implicit in \cite{KatTan}, but we give a direct proof since the claim is easy.
   %Let $E'$ be the set of non-loop edges in $G$.
   A vector $y\in (\mathbb{R}^d)^V$ is in the orthogonal complement of
   $\langle B_e: e\in E\setminus \Lo(G)\rangle$ if and only if $y(i)=y(j)$ for every $H\in C(G)$ and every $i, j\in V(H)$.
   Such vectors form a $d|C(G)|$-dimensional space.
   Among those vectors, a vector $y$ is in the orthogonal complement of $\langle B_e: e\in E\rangle$ if and only if $y(i)$ is in the orthogonal complement of $\langle a_e: e\in \Lo(H)\rangle$ for every $H\in C(G)$
and every $i\in V(H)$.  Thus the orthogonal complement of $\langle B_e: e\in E\rangle$ has dimension equal to $\sum_{H\in C(G)}(d-\dim\langle a_e: e\in \Lo(H)\rangle)$.
\end{proof}

Another type of subspace which we will associate to each edge is
 \begin{align*}
 C_e&=
 \{x\in \mathbb{R}^V: x(k)=0\ \forall k\in V\setminus \{i,j\}\} && (\text{$e=ij$ is not a loop}) \\
 C_e&=\{x\in \mathbb{R}^V: x(k)=0\ \forall k\in V\setminus \{i\}\} && (\text{$e$ is a loop at $i$}).
 \end{align*}
These subspaces give  a linear representation of the {\em bicircular matroid} of $G$, and we have
\begin{equation}
\label{eq:bicircular}
\dim \langle C_e: e\in E\rangle=|V(E)|.
\end{equation}
We will also need the following result of Lov\'asz \cite{L} which gives a geometric interpretation of the so-called Dilworth truncation of a matroid.
We say that a hyperplane $H$ intersects a family ${\cal U}$ of linear subspaces {\em transversally}
if $\dim H\cap U=\dim U-1$ for every $U\in {\cal U}$.

\begin{lemma}\label{lem:dilworth}
Let $E$ be a finite set and ${\cal U}=\{U_e: e\in E\}$ be a family of linear subspaces of $\mathbb{R}^d$.
Then there exists a hyperplane $H$ which intersects ${\cal U}$ transversally and is such that
\begin{equation}
\label{eq:dilworth}
\dim\langle U_e\cap H\,: e\in E\rangle
= \min \left\{\sum_{i=1}^k (\dim \langle U_e\,:\,e\in E_i\rangle-1)\right\},
\end{equation}
where the minimum is taken over all partitions $\{E_1,E_2,\ldots,E_k\}$ of $E$.
\end{lemma}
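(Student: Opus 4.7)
This is the classical theorem of Lov\'asz \cite{L} on the geometric realization of the Dilworth truncation of a submodular set function. Define $r: 2^E \to \mathbb{Z}_{\geq 0}$ by $r(A) = \dim\langle U_e : e\in A\rangle$, which is submodular and monotone, and set $\rho(A) = \min_{\mathcal{P}} \sum_{B\in\mathcal{P}}(r(B) - 1)$, the minimum being over partitions $\mathcal{P}$ of $A$ into non-empty blocks. The plan is to choose a sufficiently generic hyperplane $H$ and prove that $\dim\langle U_e \cap H : e\in E\rangle = \rho(E)$.

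First I would establish the direction ``$\leq$''. For any partition $\{E_1, \ldots, E_k\}$ of $E$ and any hyperplane $H$ intersecting each $\langle U_e : e\in E_i\rangle$ transversally, subadditivity of dimension and transversality give
\[
\dim\langle U_e \cap H : e \in E\rangle \le \sum_{i=1}^{k} \dim\langle U_e \cap H : e\in E_i\rangle \le \sum_{i=1}^{k}(r(E_i) - 1),
\]
so minimising over partitions yields the upper bound.

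For the reverse inequality I would choose $H$ to intersect every subspace of the form $\langle U_e : e\in A\rangle$, and every intersection of two such subspaces, transversally; these are finitely many Zariski-open conditions, so a simultaneously-generic $H$ exists. I would then proceed by induction on $|A|$, establishing the stronger statement that $\dim\langle U_e \cap H : e\in A\rangle = \rho(A)$ for every non-empty $A\subseteq E$. The base $|A|=1$ is immediate from transversality. For the inductive step, fix a maximally fine optimal partition $\mathcal{P}^* = \{A_1, \ldots, A_k\}$ realising $\rho(A)$; a short argument using submodularity and the optimality of $\mathcal{P}^*$ shows that each block $A_i$ is \emph{indecomposable} in the sense that $\rho(A_i) = r(A_i) - 1$. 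When $k\ge 2$ each $|A_i|<|A|$, so the inductive hypothesis gives $\langle U_e \cap H : e\in A_i\rangle = \langle U_e : e\in A_i\rangle \cap H$ for every $i$.

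The hard part is then to verify that the subspaces $V_i\cap H$, where $V_i := \langle U_e : e\in A_i\rangle$, are in ``general position'' with respect to addition, i.e.\ that $\dim \sum_i (V_i \cap H) = \sum_i(\dim V_i - 1) = \sum_i (r(A_i)-1)$. I would argue by contradiction: any failure of this identity would exhibit, via the dimension formula applied to the $V_i$'s together with the genericity of $H$, a linear dependence that can only be explained by an actual submodular defect $\sum_{i\in I} r(A_i) > r(\bigcup_{i\in I} A_i) + \sum_{i\in I}\bigl(\text{something small}\bigr)$, which in turn produces a refinement or amalgamation of $\mathcal{P}^*$ with a strictly smaller value of $\sum(r(\cdot)-1)$, contradicting the optimality and indecomposability of $\mathcal{P}^*$. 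The remaining case $k=1$ reduces to $\rho(A) = r(A)-1 = \dim(\langle U_e : e\in A\rangle \cap H)$ by transversality, together with a further induction on proper subsets of $A$ to check that the sum $\langle U_e\cap H : e\in A\rangle$ actually fills out the full hyperplane section $\langle U_e : e\in A\rangle \cap H$.
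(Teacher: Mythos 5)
The paper offers no proof of this lemma at all: it is quoted directly as a theorem of Lov\'asz \cite{L} giving the geometric interpretation of the Dilworth truncation, so there is no in-paper argument to measure yours against. Your identification of the statement is correct, and your overall strategy --- the easy direction by subadditivity of dimension together with transversality, the hard direction by choosing a simultaneously generic hyperplane and inducting over the blocks of a finest optimal partition, each of which is indecomposable --- is the standard route to Lov\'asz's theorem and is sound in outline.

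As a standalone proof, however, your proposal has a genuine gap at exactly the point you yourself flag as ``the hard part''. The claim that $\dim\sum_i (V_i\cap H)=\sum_i(\dim V_i-1)$ for the blocks $V_i=\langle U_e: e\in A_i\rangle$ of an optimal partition is essentially the entire content of the theorem, and your argument for it is a placeholder: the displayed inequality literally contains the phrase ``something small'', and the mechanism by which a dimension defect of $\sum_i(V_i\cap H)$ is converted into a partition of strictly smaller value is never specified. Genericity of $H$ with respect to finitely many subspaces and their pairwise intersections does not by itself control the dimension of a sum of many hyperplane sections; one needs Lov\'asz's actual inductive argument, in which the subspaces are adjoined one at a time and submodularity is used to detect precisely when the new section already lies in the span of the previous ones. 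Likewise the case $k=1$ is not settled ``by transversality'': showing that the sections $U_e\cap H$ for $e\in A$ fill out all of $\langle U_e : e\in A\rangle\cap H$ when $A$ is indecomposable is again the crux rather than a formality, and your proposed ``further induction on proper subsets'' is not spelled out. Since the paper itself simply cites \cite{L}, the pragmatic resolution is to do the same; if you want a self-contained proof, the missing step must be carried out in full.
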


%%%%%%%%%%%%%%%%%%%%%%%%%%%%%%%%%%%
\subsubsection{Proof of Theorem~\ref{thm:fixed_intercept0}}
\label{subsec:proof1}
\begin{proof}[Proof of Theorem~\ref{thm:fixed_intercept0}.]
Let $\ell_j=(a_j,0)$ for each $j\in V_L$ and let $R(G,p,\ell)$ be the rigidity matrix of a framework $(G,p,\ell)$ representing the system (\ref{eq:intercept_sys1})--(\ref{eq:intercept_sys2}). This is a $(|V_L|+|E|)\times (2|V_P|+2|V_L|)$-matrix whose rows are of one of the following four types:
\begin{displaymath}\bordermatrix{& & i & & j & &k & & l & \cr
 ij\in E_{PP} & \ldots   & p_i-p_j &  \ldots    & p_j-p_i &  \ldots   & \ldots & \ldots  & \ldots &  \ldots \cr
 jk\in E_{PL} & \ldots   & \ldots &  \ldots    & a_k &  \ldots   & p_j & \ldots  & \ldots &  \ldots \cr
  kl\in E_{LL} & \ldots   & \ldots &  \ldots    & \ldots &  \ldots   & a_l & \ldots  & a_k &  \ldots \cr
 % k\in V_{L} & \ldots   & \ldots &  \ldots    & \ldots &  \ldots   & a_k & \ldots  & \ldots &  \ldots \cr
  l\in V_{L} & \ldots   & \ldots &  \ldots    & \ldots &  \ldots   & \ldots & \ldots  & a_l &  \ldots \cr
}
\end{displaymath}
 where $i, j$ denote vertices in $V_P$  while $k, l$ denote vertices in $V_L$ (and unspecified entries are equal to zero).
Since the set of the row vectors of $R(G,p,\ell)$ indexed by the vertices in $V_L$ is linearly independent,
%$R(G,p,\ell)$ has a row corresponding to a vertex in $V_L$, and the set of the row vectors corresponding to the vertices in $V_L$ is linearly independent.
%Hence,
$R(G,p,\ell)$ is row-independent if and only if
the projections of the remaining row vectors of $R(G,p,\ell)$ onto the orthogonal complement of the space spanned by the row vectors indexed by $V_L$ form a linearly independent set.
In other words, $R(G,p,\ell)$ is row-independent if and only if the following $|E|\times 2(|V_P|+|V_L|)$-matrix is row-independent:
\begin{displaymath}
\bordermatrix{& & i & & j & &k & & l & \cr
 ij\in E_{PP} & \ldots   & p_i-p_j &  \ldots    & p_j-p_i &  \ldots   & \ldots & \ldots  & \ldots &  \ldots \cr
 jk\in E_{PL} & \ldots   & \ldots &  \ldots    & a_k &  \ldots   & \langle p_j, a_k^{\bot}\rangle a_k^{\bot} & \ldots  & \ldots &  \ldots \cr
  kl\in E_{LL} & \ldots   & \ldots &  \ldots    & \ldots &  \ldots   & \langle a_l, a_k^{\bot}\rangle a_k^{\bot} & \ldots  & \langle a_k, a_l^{\bot}\rangle a_l^{\bot} &  \ldots \cr
 }
 \end{displaymath}

 Note that, if $a_k=a_l$ for $k, l\in V_L$ with $kl\in E_{LL}$, then the corresponding row in the above matrix becomes zero, and hence  $a_k\neq a_l$  is necessary  for $(G,p,\ell)$ to be minimally infinitesimally rigid.
 Thus in the following discussion we assume  $a_k\neq a_l$ for all $kl\in E_{LL}$.
%By further rotating the two-dimensional space associated with each vertex in $V_L$,

By taking a suitable linear combination of the two columns indexed by each $k\in V_L$ to convert one of these columns to a zero column and then deleting this zero column, and using the fact  that $\langle a_l, a_k^{\bot}\rangle = -\langle a_k, a_l^{\bot}\rangle$ for all pairs $k, l\in V_L$, we may deduce that $R(G,p,\ell)$ is row-independent if and only if  the following $|E|\times (2|V_P|+|V_L|)$-matrix $R'(G,p,\ell)$ is row-independent:
\begin{equation}\label{eq:R'}
\bordermatrix{& & i & & j & &k & & l & \cr
 ij\in E_{PP} & \ldots   & p_i-p_j &  \ldots    & p_j-p_i &  \ldots   & \ldots & \ldots  & \ldots &  \ldots \cr
 jk\in E_{PL} & \ldots   & \ldots &  \ldots    & a_k &  \ldots   & \langle p_j, a_k^{\bot}\rangle & \ldots  & \ldots &  \ldots \cr
  kl\in E_{LL} & \ldots   & \ldots &  \ldots    & \ldots &  \ldots   & 1 & \ldots  & -1 &  \ldots \cr
}
\end{equation}
%where we used the fact  that $\langle a_l, a_k^{\bot}\rangle = -\langle a_k, a_l^{\bot}\rangle$ for any pair $k, l\in V_L$.

We will show that there is an injective map $p:V_P\rightarrow \mathbb{R}^2$ such that
$R'(G,p,\ell)$ is row-independent  if and only if
\[|F|\leq 2\nu_{V_P}(F)+\nu_{V_L}(F)-1-
\sum_{H\in C((G[F])^P)}(2-\dim \langle a_j: ij\in F\cap E_{PL}, i\in V(H)\rangle)\]
for all nonempty $F\subseteq E$, implying the theorem.

To this end,  we define the following linear subspace $U_e^P$ in $(\mathbb{R}^2)^{V_P}$ for each $e\in E$:
\begin{align*}
U_e^P&=\{x\in (\mathbb{R}^2)^{V_P}: x(i)+x(j)=0, x(k)=0 \ \forall k\in V_P\setminus \{i,j\} \}
&& (ik\in E_{PP}) \\
U_e^P&=\{x\in (\mathbb{R}^2)^{V_P}: x(i)\in \langle a_j\rangle, x(k)=0 \ \forall k\in V_P\setminus \{i\} \}
&& (ij\in E_{PL}, j\in V_L) \\
U_e^P&=\{0\} && (ij\in E_{LL})
\end{align*}
Note that the linear subspaces are in the form of $B_e$ given in Section~\ref{subsec:matroid} with  the underlying graph $G^P$.
Moreover,  for $H\in C(G^P)$,
there is a correspondence between a loop in $H$ and
an edge $ij\in E_{PL}$ with $i\in V(H)$.
Therefore Lemma~\ref{lem:intercept1} gives
\begin{equation}
\label{eq:intercept1}
\dim \langle U_e^P: e\in E\rangle=2\nu_{V_P}(E)-\sum_{H\in C((G[E])^P)}
(2-\dim \langle a_j: ij\in E_{PL}, i\in V(H)\rangle).
\end{equation}

For each $e\in E$, we also define the following linear subspace $U_e^L$ in $\mathbb{R}^{V_L}$:
\begin{align*}
U_e^L&=\{0\} && (ik\in E_{PP}) \\
U_e^L&=\{x\in \mathbb{R}^{V_L}: x(k)=0 \ \forall k\in V_L\setminus \{j\} \}
&& (ij\in E_{PL}, j\in V_L) \\
U_e^L&=\{x\in \mathbb{R}^{V_L}:  x(k)=0 \ \forall k\in V_L\setminus \{i,j\} \} && (ij\in E_{LL})
\end{align*}
Note that the linear subspaces are in the form of $C_e$ given in Section~\ref{subsec:matroid} with  the underlying graph $G^L$  obtained from $G$ by
removing $V_P$ and regarding each edge $ij$ in $E_{PL}$ with $j\in V_L$ as a loop at $j$.
Hence by (\ref{eq:bicircular})
\begin{equation}
\label{eq:intercept2}
\dim \langle U_e^L: e\in E\rangle=\nu_{V_L}(E).
\end{equation}

Now we consider the direct sum of $(\mathbb{R}^2)^{V_P}$ and $\mathbb{R}^{V_L}$, and let
$U_e$ be the direct sum of $U_e^P$ and $U_e^L$ for each edge $e$.
Combining (\ref{eq:intercept1}) and (\ref{eq:intercept2}),
\begin{equation}
\label{eq:intercept3}
\dim \langle U_e: e\in E\rangle=2\nu_{V_P}(E)+\nu_{V_L}(E)-\sum_{H\in C((G[F])^P)}
(2-\dim \langle a_j: ij\in E_{PL}, i\in V(H)).
\end{equation}
By Lemma~\ref{lem:dilworth}, there is a hyperplane $H$ in $(\mathbb{R}^2)^{V_P}\times \mathbb{R}^{V_L}$ intersecting $\{U_e: e\in E\}$ transversally and satisfying (\ref{eq:dilworth}).
Denote a normal vector of $H$ by $s\in (\mathbb{R}^2)^{V_P}\times \mathbb{R}^{V_L}$.
Since  the hyperplane $H$ intersects $\{U_e: e\in E\}$ transversally,
we may assume $s(i)\neq s(j)$ for $i, j\in V_P$ with $i\neq j$ and $s(j)\neq 0$ for $k\in V_L$
(since a small perturbation of $s$ will not change the property (\ref{eq:dilworth}).)

We define  $p:V_P\rightarrow \mathbb{R}^2$ by $p(i)=s(i)^{\bot}$ and show that $\dim \langle U_e\cap H: e\in E\rangle$ is equal to the rank of $R'(G,p,\ell)$ given in (\ref{eq:R'}).
We will use the following claim, which directly follows from the definition of $U_e$ and the fact that $x\in U_e\cap H$ if and only if $x\in U_e$ and $\langle x,s\rangle=0$.
\begin{claim}
\label{claim:intercept}
 A vector $x\in U_e$ lies in $H$ if and only if:
\begin{itemize}
\item for $e=ij\in E_{PP}$, $x(i)=-x(j)$ and $x(i)$ is proportional to  $p(i)-p(j)$;
\item for $e=ij\in E_{PL}$ wth $j\in V_L$,
$\frac{x(i)}{x(j)}=\frac{s(j)a_j}{\langle p_i, a_j^{\bot}\rangle}$;
\item for $e=ij\in E_{LL}$, $\frac{x(i)}{x(j)}=-\frac{s(j)}{s(i)}$.
\end{itemize}
\end{claim}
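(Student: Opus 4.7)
The plan is a direct case-by-case verification. For each of the three edge types, I would write down the most general element $x \in U_e$ using the explicit definition of $U_e$ given in the proof, then compute $\langle x, s\rangle$ and use the identification $p(i) = s(i)^\bot$ to rephrase the resulting linear condition in the form stated in the claim. Since $U_e$ is supported only on the coordinates indexed by the endpoints of $e$, only the components $s(i)$ and $s(j)$ of the normal $s$ actually enter the computation, so there is no interaction between edges and no global obstruction.

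For $e = ij \in E_{PP}$, an element $x \in U_e$ is determined by a vector $x(i) \in \mathbb{R}^2$ with $x(j) = -x(i)$ and all other coordinates zero, so
\[
\langle x, s\rangle = \langle x(i), s(i)\rangle + \langle x(j), s(j)\rangle = \langle x(i), s(i)-s(j)\rangle.
\]
This vanishes if and only if $x(i)$ is orthogonal to $s(i)-s(j)$, equivalently proportional to $(s(i)-s(j))^\bot = s(i)^\bot - s(j)^\bot = p(i)-p(j)$, which is the first bullet. For $e = ij \in E_{LL}$, a vector $x \in U_e$ has only the scalar coordinates $x(i), x(j)$ possibly nonzero, so $\langle x,s\rangle = x(i)s(i) + x(j)s(j) = 0$ gives the third bullet at once.

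The slightly richer case is $e = ij \in E_{PL}$ with $i \in V_P$, $j \in V_L$. Here $x(i) = \alpha a_j$ for some $\alpha \in \mathbb{R}$, $x(j) \in \mathbb{R}$ is arbitrary, and all other coordinates vanish, so $\langle x, s\rangle = \alpha \langle a_j, s(i)\rangle + x(j) s(j)$. Using the standard planar identities $\langle u^\bot, v^\bot\rangle = \langle u,v\rangle$ and $\langle u, v^\bot\rangle = -\langle u^\bot, v\rangle$ for the rotation $\bot$, together with $p(i) = s(i)^\bot$, one obtains $\langle a_j, s(i)\rangle = \pm\langle p(i), a_j^\bot\rangle$. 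Substituting and solving $\langle x, s\rangle = 0$ for the ratio between $x(i) = \alpha a_j$ and $x(j)$ (which is meaningful because $\langle p(i), a_j^\bot\rangle \neq 0$, otherwise $H$ would not meet $U_e$ transversally) gives the second bullet.

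The whole argument is elementary linear algebra: for each edge, $\langle x,s\rangle$ is a single linear form in the coordinates of $x$ restricted to $U_e$, so its kernel inside $U_e$ is a subspace of codimension at most one, matching transversality of $H$. There is no substantive obstacle; the only point requiring care is the sign convention for $\bot$, which controls whether the ratio in the middle bullet carries a $+$ or $-$ sign. Once that convention is fixed consistently with the one used elsewhere in the paper (the $\pi/2$ rotation from the proof of Theorem~\ref{thm:fixednormal}), all three conditions fall out of the computation above.
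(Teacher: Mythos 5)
Your proposal is correct and is essentially the paper's argument: the paper dismisses the claim as following ``directly from the definition of $U_e$ and the fact that $x\in U_e\cap H$ if and only if $x\in U_e$ and $\langle x,s\rangle=0$'', and your case-by-case evaluation of the linear form $\langle x,s\rangle$ on each subspace $U_e$, using $p(i)=s(i)^{\bot}$, is exactly that verification spelled out. Your caveat about the sign in the $E_{PL}$ bullet is well taken --- with the paper's clockwise convention for $\bot$ one gets $\langle a_j,s(i)\rangle=\langle p_i,a_j^{\bot}\rangle$ and hence the ratio $-s(j)a_j/\langle p_i,a_j^{\bot}\rangle$ --- but the discrepancy is immaterial, since only the one-dimensional space $U_e\cap H$ (not a choice of spanning vector) enters the subsequent rank computation, where columns are rescaled anyway.
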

Since each $U_e\cap H$ is one-dimensional, Claim~\ref{claim:intercept} implies that
 $\langle U_e\cap H: e\in E\rangle$ is equal to the row space of the $|E|\times (2|V_P|+|V_L|)$-matrix having the following form:
\begin{displaymath}\bordermatrix{& & i & & j & &k & & l & \cr
 ij\in E_{PP} & \ldots   & p_i-p_j &  \ldots    & p_j-p_i &  \ldots   & \ldots & \ldots  & \ldots &  \ldots \cr
 jk\in E_{PL} & \ldots   & \ldots &  \ldots    & a_k &  \ldots   & \langle p_j, a_k^{\bot}\rangle/s(k) & \ldots  & \ldots &  \ldots \cr
  kl\in E_{LL} & \ldots   & \ldots &  \ldots    & \ldots &  \ldots   & 1/s(k) & \ldots  & -1/s(l) &  \ldots \cr
}.
\end{displaymath}
By scaling each column indexed by a vertex in $V_L$, this matrix is transformed to $R'(G,p,\ell)$ (as defined in (\ref{eq:R'})).
In other words,
\begin{equation}
\label{eq:intercept4}
\rank R'(G,p,\ell)=\dim \langle U_e\cap H: e\in E\rangle.
\end{equation}

By (\ref{eq:dilworth}), (\ref{eq:intercept3}), and (\ref{eq:intercept4}), we get
$\rank R'(G,p,\ell) =\min\left\{\sum_{F\in {\cal E}}f(F)\right\}$
where
$$f(F)= 2\nu_{V_P}(F)+\nu_{V_L}(F)-1-\sum_{H\in C((G[F])^P)}
\left(2-\dim \langle a_j: ij\in F\cap E_{PL}, i\in V(H)\rangle\right)$$
and
the minimum is taken over all partitions ${\cal E}$ of $E$ into nonempty subsets.
The function $f:2^E\to \mathbb{Z}$
%given by
%$$f(F)= 2\nu_P(F)+\nu_L(F)-1-\sum_{H\in C((G[F])^P)}
%\left(2-\dim \langle a_j: ij\in F\cap E_{PL}, i\in V(H)\rangle\right)$$
is submodular, nondecreasing and non-negative, since it determines the dimension of $\langle\{U_e\,:\,e\in F\}\rangle$ by (\ref{eq:intercept3}). Hence $f$ induces
the row matroid of $R'(G,p,\ell)$ by  \cite{E}.
This implies that $\rank R'(G,p,\ell)=|E|$ if and only if $|F|\leq f(F)$
%$$|F|\leq 2\nu_P(F)+\nu_L(F)-1-\sum_{H\in C((G[F])^P)}
%(2-\dim \langle a_j: ij\in F\cap E_{PL}, i\in V(H)\rangle)$$
for all nonempty $F\subseteq E$.
\end{proof}

%%%%%%%%%%%%%%%%%%%%%%
\subsubsection{Proof of Theorem~\ref{thm:fixed_intercept}}
\label{subsec:proof2}
\begin{proof}[Proof of Theorem~\ref{thm:fixed_intercept}.]
By Theorem~\ref{thm:fixed_intercept0} it suffices to prove that the two combinatorial conditions in Theorem~\ref{thm:fixed_intercept} and Theorem~\ref{thm:fixed_intercept0} are equivalent under the assumption that the normals are distinct.

For each edge set $F$ and each $H\in C(G[F]^P)$, recall that  $V(H)$ is a subset of $V(G)$.
We let $F(H)$ be the set of edges in $F$ incident to $V(H)$ in $G$.
Then the counts of Theorem~\ref{thm:fixed_intercept} and Theorem~\ref{thm:fixed_intercept0} can be written as
\begin{align}
\label{eq:int1}
|F|&\leq 2\nu_{V_P}(F)+\nu_{V_L}(F)-1-\max\{0, 2-|V_L(F)|\} \\
|F|&\leq 2\nu_{V_P}(F)+\nu_{V_L}(F)-1-\sum_{H\in C(G[F]^P)} \max\{0, 2-|V_L(F(H))|\},  \label{eq:int2}
\end{align}
respectively, where the count of Theorem~\ref{thm:fixed_intercept0} is simplified to (\ref{eq:int2}) due to the assumption that the normals are distinct.

Since $F(H)\subseteq F$, $\max\{0, 2-|V_L(F(H))|\} \geq \max\{0, 2-|V_L(F)|\}$ for each $H\in C(G[F]^P)$.
Thus (\ref{eq:int2}) implies (\ref{eq:int1}) if $C(G[F]^P)\neq \emptyset$.
If $C(G[F]^P)= \emptyset$, then $F\subseteq E_{LL}$ holds, and hence $|V_L(F)|\geq 2$.
Thus the right hand side of (\ref{eq:int1}) and (\ref{eq:int2}) coincide.
Hence, (\ref{eq:int2}) always implies (\ref{eq:int1}).

To complete the proof, we show that $F$ satisfies (\ref{eq:int2}) if each nonempty  subset of $F$ satisfies (\ref{eq:int1}).
Let $H_1, \dots, H_k$ be all the components in $C(G[F]^P)$ with $|V_L(F(H_i))|\leq 1$.
Let $F'=F\setminus \bigcup_{i=1}^k F(H_i)$.
Then by (\ref{eq:int1}) we have
\begin{align*}
|F(H_i)|&\leq 2\nu_{V_P}(F(H_i))+\nu_{V_L}(F(H_i))-1-\max\{0, 2-|V_L(F(H_i))|\}  \\
|F'|&\leq 2\nu_{V_P}(F')+\nu_{V_L}(F')-1.
\end{align*}
Since $H_i\in C(G[F]^P)$, $V(F(H_i))\cap V(F\setminus F(H_i))\subseteq V_L$ holds,
implying
\[
\nu_{V_P}(F')+\sum_{i=1}^k \nu_{V_P}(F(H_i))=\nu_{V_P}(F).
\]
Moreover,  by $|V_L(F(H_i))|\leq 1$ for $1\leq i\leq k$,  $|V(F(H_i))\cap V(F\setminus F(H_i))|\leq 1$ holds, which implies
\[
\nu_{V_L}(F') +\sum_{i=1}^k \nu_{V_L}(F(H_i)) \leq \nu_{V_L}(F)+k.
\]
Therefore,
\begin{align*}
|F|&=|F'|+\sum_{i=1}^k |F_i| \\
&\leq 2\nu_{V_P}(F')+\nu_{V_L}(F'))-1 +\sum_{i=1}^k (2\nu_{V_P}(F(H_i))+\nu_{V_L}(F(H_i))-1-\max\{0, 2-|V_L(F(H_i))|\} ) \\
&\leq  2\nu_{V_P}(F)+\nu_{V_L}(F)-1+\sum_{i=1}^k \max\{0, 2-|V_L(F(H_i))|\} \\
&= 2\nu_{V_P}(F)+\nu_{V_L}(F)-1+\sum_{H\in C(G[F]^P)} \max\{0, 2-|V_L(F(H_i))|\},
\end{align*}
and $F$ satisfies (\ref{eq:int1}). This completes the proof.
\end{proof}

%%%%%%%%%%%%%%%%%%%%%%%%%%%%%%%%%%
\subsection{Mixed constraints}\label{subsec:mixed}

A natural question is how to generalise the results of Sections
\ref{subsec:pin}-\ref{subsec:intercept} to the case when the lines
have a mixture of constraints. That is, some lines are completely
fixed, some lines have fixed normals so can translate but not
rotate, some lines can rotate about a fixed point but cannot
translate, and some are unconstrained. We will extend the construction used in the proof of Theorem \ref{thm:fixed_normal} to show that generic instances of this mixed
constraint problem can be transformed to the unconstrained problem
and then solved using Theorem \ref{thm:transfer2}.

Suppose we have a point-line graph $G$ which has various types of
line vertices i.e. a set $V_L^F$ of fixed lines, a set $V_L^N$ of
lines with fixed normals, a collection $\scrr$ of pairwise disjoint
sets of lines with a fixed centre of rotation, and unconstrained
lines. A {\em realisation} of $G$ in $\R^2$ is a framework
$(G,p,\ell)$ together with a map $c:\scrr\to \R^2$, where $c(S)$ is
the centre of rotation for all lines in $S$ for each $S\in \scrr$.
We say that the constrained framework $(G,p,\ell,c)$ is {\em
generic} if the set of coordinates $\{p_i,a_j,c_S\,:\,i\in V_P,j\in
V_L,S\in \scrr\}$ are algebraically independent over $\Q$.

We first consider the case when $|V_L^F|+|\scrr|\geq 1$ and
$|V_L^F|+|\scrr|+|V_L^N|\geq 2$. (In this case no rotation or
translation of $\R^2$ will satisfy the constraints on the lines of
any generic realisation of $G$.) We construct an unconstrained
point-line graph $G'$ by first adding a large rigid point-line graph
$K$ to $G$.
%If $G$ has no fixed lines we also add a `dummy' fixed line-vertex.
We then choose a line-vertex $v_0$ in $K$ and add an edge from $v_0$ to each
$v\in V_L^N$. This corresponds to the operation of adding the `tree of grey edges' joining the (fixed-normal) line-vertices
in Figure \ref{fig:fixednormal}. 
%$v\in V_L^F$ to two point-vertices of $K$.
For each set $S\in \scrr$, we choose a distinct point-vertex $u_S$ in
$K$ and add an edge from $u_S$ to each vertex of $S$. 
This corresponds to the operation of adding a new point-vertex joined by `grey edges' to each of the (fixed-intercept) line-vertices in  Figure \ref{fig:sphereintercept}. Finally
we join each $v\in V_L^F$ to $v_0$ and a point-vertex of $K$.
%choose a line-vertex $v_0$ in $K$ and add an edge from $v_0$ to each
%$v\in V_L^N$.
This construction is illustrated in Figure
\ref{fig:mixed}.

\begin{figure}
\begin{center}
\begin{tikzpicture}[scale=0.8]
\tikzstyle{every node}=[circle, draw=black, very thick, fill=white, inner sep=0pt, minimum width=4pt];
%\filldraw (0,8) circle (2pt) node[anchor=east]{$v_1$};
%\filldraw (0,7.5) circle (2pt) node[anchor=east]{$v_2$};

%\filldraw (0,6.5) circle (2pt) node[anchor=east]{$v_3$};

%\filldraw (0,5.5) circle (2pt) node[anchor=east]{$v_4$};
%\filldraw (0,5) circle (2pt) node[anchor=east]{$v_5$};
%\filldraw (0,4.5) circle (2pt) node[anchor=east]{$v_6$};

%\filldraw (0,3.5) circle (2pt) node[anchor=east]{$v_7$};
%\filldraw (0,3) circle (2pt) node[anchor=east]{$v_8$};

% \filldraw[draw=black] (0,8) circle (2pt);
% \filldraw[draw=black] (0,7.5) circle (2pt);
\filldraw[draw=black](0,7.85)-- (0.15,8)-- (0,8.15)-- (-0.15,8)--(0,7.85);
\filldraw[draw=black](0,7.35)-- (0.15,7.5)-- (0,7.65)-- (-0.15,7.5)--(0,7.35);
% \filldraw[draw=black] (0,6.5) circle (2pt);
\filldraw(-0.1,6.4) rectangle (0.1,6.6);

% \filldraw[draw=black] (0,5.5) circle (2pt);
% \filldraw[draw=black] (0,5) circle (2pt);
% \filldraw[draw=black] (0,4.5) circle (2pt);
\filldraw[draw=black] (0,5.5) ellipse (0.1cm and 0.15cm);
\filldraw[draw=black] (0,5) ellipse (0.1cm and 0.15cm);
\filldraw[draw=black] (0,4.5) ellipse (0.1cm and 0.15cm);

% \filldraw[draw=black] (0,3.5) circle (2pt);
% \filldraw[draw=black] (0,3) circle (2pt);
\filldraw[draw=black] (0,3.5) ellipse (0.15cm and 0.1cm);
\filldraw[draw=black] (0,3) ellipse (0.15cm and 0.1cm);

\node [draw=white, fill=white,rectangle] (a) at (-0.45,8) {$v_1$};
\node [draw=white, fill=white,rectangle] (a) at (-0.45,7.5) {$v_2$};
\node [draw=white, fill=white,rectangle] (a) at (-0.45,6.5) {$v_3$};
\node [draw=white, fill=white,rectangle] (a) at (-0.45,5.5) {$v_4$};
\node [draw=white, fill=white,rectangle] (a) at (-0.45,5) {$v_5$};
\node [draw=white, fill=white,rectangle] (a) at (-0.45,4.5) {$v_6$};
\node [draw=white, fill=white,rectangle] (a) at (-0.45,3.5) {$v_7$};
\node [draw=white, fill=white,rectangle] (a) at (-0.45,3) {$v_8$};

\draw[black,thick]
(2.5,5) -- (4.5,5);

\draw[black,thick]
(4.3,5.2) -- (4.5,5) -- (4.3,4.8);

%\filldraw (7,10) circle (2pt) node[anchor=east]{$v_0$};

%\filldraw (7,8) circle (2pt) node[anchor=east]{$v_1$};
%\filldraw (7,7.5) circle (2pt) node[anchor=east]{$v_2$};

%\filldraw (7,6.5) circle (2pt) node[anchor=east]{$v_3$};

%\filldraw (7,5.5) circle (2pt) node[anchor=east]{$v_4$};
%\filldraw (7,5) circle (2pt) node[anchor=east]{$v_5$};
%\filldraw (7,4.5) circle (2pt) node[anchor=east]{$v_6$};

%\filldraw (7,3.5) circle (2pt) node[anchor=east]{$v_7$};
%\filldraw (7,3) circle (2pt) node[anchor=east]{$v_8$};

\filldraw[draw=black] (7,10) circle (3pt);

%\filldraw[draw=black](7,7.85)-- (7.15,8)-- (7,8.15)-- (6.85,8)--(7,7.85);
%\filldraw[draw=black](7,7.35)-- (7.15,7.5)-- (7,7.65)-- (6.85,7.5)--(7,7.35);
%\filldraw(6.9,6.4) rectangle (7.1,6.6);

\filldraw[draw=black] (7,8) circle (3pt);
 \filldraw[draw=black] (7,7.5) circle (3pt);
 \filldraw[draw=black] (7,6.5) circle (3pt);

 \filldraw[draw=black] (7,5.5) circle (3pt);
 \filldraw[draw=black] (7,5) circle (3pt);
 \filldraw[draw=black] (7,4.5) circle (3pt);

%\filldraw[draw=black] (7,5.5) ellipse (0.1cm and 0.15cm);
%\filldraw[draw=black] (7,5) ellipse (0.1cm and 0.15cm);
%\filldraw[draw=black] (7,4.5) ellipse (0.1cm and 0.15cm);

 \filldraw[draw=black] (7,3.5) circle (3pt);
 \filldraw[draw=black] (7,3) circle (3pt);

%\filldraw[draw=black] (7,3.5) ellipse (0.15cm and 0.1cm);
%\filldraw[draw=black] (7,3) ellipse (0.15cm and 0.1cm);

\node [draw=white, fill=white,rectangle] (a) at (6.6,10) {$v_0$};
\node [draw=white, fill=white,rectangle] (a) at (6.6,8) {$v_1$};
\node [draw=white, fill=white,rectangle] (a) at (6.6,7.5) {$v_2$};
\node [draw=white, fill=white,rectangle] (a) at (6.6,6.5) {$v_3$};
\node [draw=white, fill=white,rectangle] (a) at (6.6,5.5) {$v_4$};
\node [draw=white, fill=white,rectangle] (a) at (6.6,5) {$v_5$};
\node [draw=white, fill=white,rectangle] (a) at (6.6,4.5) {$v_6$};
\node [draw=white, fill=white,rectangle] (a) at (6.6,3.5) {$v_7$};
\node [draw=white, fill=white,rectangle] (a) at (6.6,3) {$v_8$};

%\filldraw (11,7) circle (2pt) node[anchor=west]{$u_{S_1}$};

%\filldraw (11,5) circle (2pt) node[anchor=west]{$u_{S_2}$};

\node (u1) at (11,7) {};
\node (u2) at (11,5) {};
\node [draw=white, fill=white,rectangle] (a) at (11.5,7) {$u_{S_1}$};
\node [draw=white, fill=white,rectangle] (a) at (11.5,5) {$u_{S_2}$};

\draw[black,thick]
(7,10) -- (u1) -- (u2) -- (7,10) -- (7,8);

\draw[thick] plot[smooth, tension=1] coordinates{(7,10)(7.5,8.75)(7,7.5)};

\draw[thick] plot[smooth, tension=1] coordinates{(7,10)(7.7,8.25)(7,6.5)};

\draw[black,thick]
(7,6.5) --(u1) -- (7,5.5);

\draw[black,thick]
(7,4.5) -- (u1) -- (7,5);

\draw[black,thick]
(7,3) -- (u2) -- (7,3.5);

\draw (0,5.5) ellipse (1.2cm and 3.6cm);

\draw (7,5.5) ellipse (1.2cm and 3.6cm);

\end{tikzpicture}
\caption{A constrained point-line graph $G$ with eight constrained line vertices: $v_1$ and $v_2$ have fixed normals; $v_3$ is fixed; $S_1=\{v_4,v_5,v_6\}$ and $S_2=\{v_7,v_8\}$ have fixed centres of rotation. We transform $G$ to an unconstrained point-line graph $G'$ by adding the rigid graph $K$ with two point-vertices, $u_{S_1}$ and $u_{S_2}$, and one line-vertex $v_0$.}
\label{fig:mixed}
\end{center}
\end{figure}
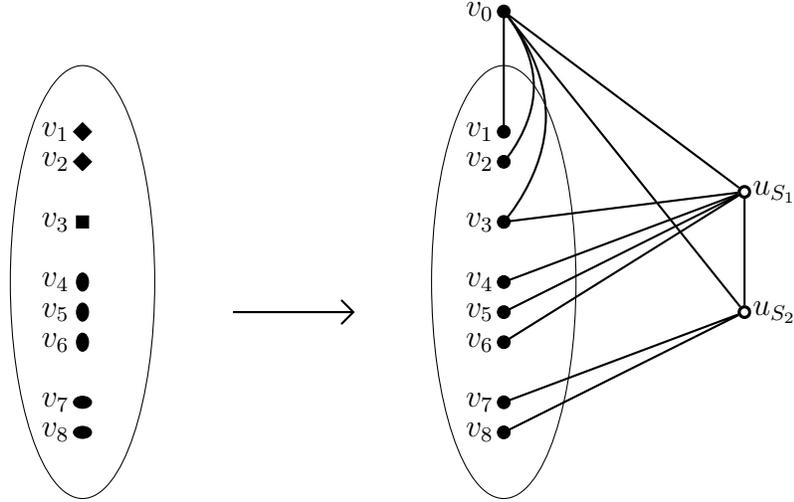

%Lets also suppose that G has sufficient lines of different type so
%that no translation or rotation of R^2 is an inf motion of any
%generic realisation of G.

Let $(G, p,\ell,c)$ and $(G', p',\ell')$ be  generic realisations of
$G$ and $G'$ in $\R^2$ so that $p(u)=p'(u)$ for all $u\in V_P$,
$\ell(v)=\ell'(v)$ for all $v\in V_L$,  and $c(S)=p'(u_S)$ for all
$S\in \scrr$. Then $(G, p,\ell,c)$ has a non-zero infinitesimal
motion if and only if $(G', p',\ell')$ has a non-zero infinitesimal
motion which keeps $K$ fixed. Hence $(G, p,\ell,c)$ is
infinitesimally rigid as a constrained point-line framework if and
only if $(G', p',\ell')$ is infinitesimally  rigid as an
unconstrained point-line framework. Thus we may determine whether
$(G, p,\ell,c)$ is infinitesimally rigid by applying Theorem
\ref{thm:transfer2} to $(G', p',\ell')$. Note that our definition of the genericity of
$(G, p,\ell,c)$ is independent of the choice of $r_j$ for $j\in
V_L$. (This makes sense because the structure of the rigidity matrix given in
Section \ref{subsec:rigmat} implies that the rank of $R_\A(G,p',\ell')$ will
be maximised for any realisation such that the coordinates of $p_i,a_j,c_S$ are algebraically independent.) This means that we are free to
choose the values of the $r_j$ such that, for each $S\in \scrr$,
each of the lines $\ell(v)$ with $v\in S$ passes through the the
point $c(S)$, so we can take the lines in each $S$ to be concurrent
as in Section \ref{subsec:intercept} if we wish.

Similar, but simpler constructions, can be used when $|V_L^F|+|\scrr|+|V_L^N|\leq 1$ or $|V_L^F|+|\scrr|= 0$.

%%%%%%%%%%%%%%%%%%%%%%%%%%%%%%%
\section{Further remarks and open problems}

The combinatorial condition in Theorem \ref{thm:barjointcol}(c), Theorem \ref{thm:fixed_normal}(d),(e) and Theorem \ref{thm:fixedline} can all be checked in polynomial time,
see \cite{JO}, \cite{Im,JO,Su} and \cite{ShaiServWh}, respectively.
For the condition in Theorem~\ref{thm:fixed_intercept},
as shown in the proof, the right hand side of the count condition defines a submodular function $f$, and hence one can decide whether the condition is satisfied in polynomial time by a general submodular function minimization algorithm. Currently we do not have a specialized efficient algorithm for this count.

We have obtained characterizations of fixed-line rigidity and fixed-intercept rigidity in Theorem~\ref{thm:fixedline} and Theorem~\ref{thm:fixed_intercept0} for a point-line framework with {\em arbitrary} normals for its lines.
An interesting  open problem is to derive an analogous result for fixed-normal rigidity.
An important special case is the problem of characterizing fixed-normal rigidity for point-line frameworks in which the lines have been partitioned into parallel classes with generic normals (this was posed by Bill Jackson and John Owen at the rigidity workshop in Banff in 2015).
In view of the relationship between fixed normal rigidity and scene analysis described in Section 3,
%Theorem~\ref{thm:fixed_normal}
this problem is challenging even when the underlying graph
%if $G=(V_P\cup V_L, E)$
is naturally bipartite (as it is equivalent to understanding when an arbitrary $2$-scene has only trivial realisations).
We have constructed examples of (nongeneric) 2-dimensional naturally bipartite point-line frameworks with distinct line-normals
%In fact there is a point-line framework with three points and six lines with distinct normals
which satisfy the count condition of Theorem~\ref{thm:fixed_normal} but  are not fixed-normal rigid.
%is not fixed-normal rigid.

In \cite{Izmestiev,BSWWSphere} the transfer of rigidity results between Euclidean spaces and spherical spaces was extended to Minkowski spaces and hyperbolic spaces (spheres in Minkowski space).  Preliminary versions of these infinitesimal rigidity transfers appear in \cite{SaliolaWh}.  Applying these transfers,  Theorem 2.5 transfers to frameworks with a designated coplanar subframework in Minkowski space and in hyperbolic space. However there are a number of further interesting questions in this direction such as extending our other results on point-hyperplane frameworks with various constraints
%forms of restrictions (or no-restrictions)
on the hyperplanes.  Since Minkowski space has the full space of translations for hyperplanes, it is a natural setting to extend the results
%for  the lines described in Section 4 and other parts
of this paper.

In the setting of body-bar frameworks, including the specific setting of Body CAD frameworks, there are preliminary results  \cite{Lee, KatTan} which include (nongeneric) pinned and slider-joints, and point-line, as well as point-plane, distance constraints.   These results and the results in this paper, can be refined and extended to explore body-bar-point frameworks - which do occur in built linkages in $2$D and $3$D.  Some of these extensions and further related results will appear in the thesis \cite{Y}.

Previous work on direction-length frameworks \cite{ServWhiteley, WhDirection} can be viewed as (i) combinatorially special fixed-normal point-line frameworks, with exactly two points joined to each line; and (ii) all point-line distances set as $0$ i.e. they are point-line  {\em incidences}.   With all this special geometry, the combinatorial characterization in \cite{ServWhiteley} has one more condition than the characterization for general point-line frameworks in Theorem~\ref{thm:fixednormal}.  This added condition comes from the fact that subgraphs with no point-point distance constraints
%distances and only incidences and line-line restraints,
can be dilated. An extension in which fixed normal lines are allowed to contain an arbitrary number of points is given in \cite{OP}. Many other extensions are open for exploration.  These connections also suggest that prior results on parallel-drawings and mixed frameworks in higher dimensions, again with two vertices per line, such as \cite{CrWhParallel, WhParallel}, can be generalized to combinatorially special  fixed-normal point-hyperplane frameworks in higher dimensions.

%%%%%%%%%%%%%%%%%%%%%%%
\section*{Acknowledgements}

We thank Banff International Research Station for hosting the 2015 workshop on `Advances in Combinatorial and Geometric Rigidity' during which this work was started. We also thank HIM (Bonn) and ICMS (Edinburgh) for hosting further rigidity workshops in October 2015 and May 2016, respectively, and we thank DIMACS (Rutgers) for hosting the workshop on `Distance Geometry' in July 2016. The discussions we had during these workshops played a crucial role in the writing of this paper.

%%%%%%%%%%%%%%%%%%%%%%%

\end{document}